\newtheorem{thm}{Theorem}[section]
\newtheorem{cor}[thm]{Corollary}
\newtheorem{lem}[thm]{Lemma}
\newtheorem{prop}[thm]{Proposition}
\newtheorem{prob}[thm]{Problem}
\theoremstyle{definition}
\newtheorem{defn}[thm]{Definition}
\newtheorem{example}[thm]{Example}
\theoremstyle{remark}
\newtheorem{rem}[thm]{Remark}
\numberwithin{equation}{section}
\begin{document}
\title[Reiterative $m_{n}$-distributional chaos of type $s$ in Fr\' echet spaces]{Reiterative $m_{n}$-distributional chaos of type $s$ in Fr\' echet spaces}

\author{Marko Kosti\' c}
\address{Faculty of Technical Sciences,
University of Novi Sad,
Trg D. Obradovi\' ca 6, 21125 Novi Sad, Serbia}
\email{marco.s@verat.net}

{\renewcommand{\thefootnote}{} \footnote{2010 {\it Mathematics
Subject Classification.} 47A06, 47A16.
\\ \text{  }  \ \    {\it Key words and phrases.} $m_{n}$-distributional chaos of type $s$, reiterative $m_{n}$-distributional chaos of type $s$, $\lambda$-distributional chaos of type $s$, reiterative $\lambda$-distributional chaos of type $s$, Fr\' echet spaces.
\\  \text{  }  \ \ The author is partially supported by grant 174024 of Ministry of Science and Technological Development, Republic of Serbia.}}

\begin{abstract}
The main aim of this paper is to consider various notions of (dense) $m_{n}$-distributional chaos of type $s$ and (dense) reiterative $m_{n}$-distributional chaos of type $s$ for general sequences of linear not necessarily continuous operators in Fr\' echet spaces. Here, $(m_{n})$ is an increasing sequence in $[1,\infty)$ satisfying $\liminf_{n\rightarrow \infty}\frac{m_{n}}{n}>0$ and $s$ could be $0,1,2,2+,2\frac{1}{2},3,1+,2-,2_{Bd},2_{Bd}+.$ We investigate $m_{n}$-distributionally chaotic properties and reiteratively $m_{n}$-distributionally chaotic properties of some special classes of operators like weighted forward shift operators and weighted backward shift operators in Fr\' echet sequence spaces, considering also continuous analogues of introduced notions and some applications to abstract partial differential equations.
\end{abstract}
\maketitle

\section{Introduction and Preliminaries}\label{intro}

Assume that $X$ is a Fr\' echet space. A
linear operator $T$ on $X$ is said to be hypercyclic iff there
exists an element $x\in D_{\infty}(T)\equiv \bigcap_{n\in {\mathbb
N}}D(T^{n})$ whose orbit $Orb(x,T)\equiv \{ T^{n}x : n\in {{\mathbb N}}_{0} \}$ is
dense in $X;$ $T $ is said to be topologically transitive,
resp. topologically mixing, iff for every pair of open
non-empty subsets $U,\ V$ of $X,$ there exists $n_{0}\in {\mathbb
N}$ such that $T^{n_{0}}(U) \ \cap \ V \neq \emptyset ,$ resp. there
exists $n_{0}\in {\mathbb N}$ such that, for every $n\in {\mathbb
N}$ with $n\geq n_{0},$ $T^{n}(U) \ \cap \ V \neq \emptyset .$ A
linear operator $T$ on $X$ is said to be chaotic iff it is topologically transitive and the set of periodic points of $T,$ defined by $\{x\in D_{\infty}(T) : (\exists n\in {\mathbb N})\, T^{n}x=x\},$ is dense in $X.$
For more details about topological dynamics of linear operators in Fr\' echet spaces, we refer the reader to the monographs \cite{bayart} by F. Bayart, E. Matheron and
\cite{erdper} by K.-G. Grosse-Erdmann, A. Peris.  

A strong motivational factor for genesis of this paper presents the fact that the structural results established in the foundational paper \cite{2013JFA} by N. C. Bernardes Jr. et al  
have not yet been seriously elucidated and completely reexamined for sequences of linear continuous operators in Fr\' echet spaces (cf. also the article \cite{mendoza} by J. A. Conejero et al). 
In our recent joint research study with A. Bonilla \cite{bk}, we have analyzed reiterative distributional chaos in Banach spaces and observed for the first time how the techniques developed in \cite{2013JFA} can be successfully applied in the analysis of dense Li-Yorke chaos. In this paper, which is
partly conceptualised as a certain addendum to the papers \cite{2013JFA} and \cite{bk}-\cite{mendoza}, the construction of distributionally irregular vectors developed in the proof of \cite[Theorem 15]{2013JFA} is essentially applied in the analysis of dense reiterative $m_{n}$-distributional chaos of type $s$ in Fr\' echet spaces. Besides that,
the paper contains a great number of other novelties, particularly those concerned with the deeper analysis of distributionally chaotic linear continuous operators $T$ on Banach space $X$ for which the condition   
\begin{align}\label{doktorbalija}
\lim_{N\rightarrow \infty}\frac{1}{N}\sum_{j=1}^{N}\bigl \|T^{j}x\bigr\|=+\infty\ \ \mbox{ for all } \ \ x\in X\setminus \{0\}
\end{align}
holds true (the existence of such an operator, acting on the space $X:=l^{p}({\mathbb N})$ for some $p\in [1,\infty)$ or $X:=c_{0}({\mathbb N}),$ has been recently proved in \cite[Theorem 25]{2018JMMA}). We revisit the Godefroy-Schapiro criterion and its continuous analogue, the Desch-Schappacher-Webb criterion, in our new framework. 

Concerning motivation, mention should be also made of the recent research study \cite{xiong} by J. C. Xiong, H. M. Fu and H. Y. Wang, where $\lambda$-distributionally chaotic continuous mappings between 
compact
metric spaces have been analyzed ($\lambda \in (0,1]$). Following this approach, we further specify linear distributional chaos in Fr\'echet spaces by using the concepts of lower and upper (Banach) $m_{n}$-densities and, in particular, the lower and upper (Banach) $\lambda$-densities.
Speaking-matter-of-factly, we analyze
various notions of (dense) $m_{n}$-distributional chaos of type $s$ and (dense) reiterative $m_{n}$-distributional chaos of type $s$ for general sequences of linear operators in Fr\' echet spaces, where $(m_{n})$ is an increasing sequence in $[1,\infty)$ satisfying $\liminf_{n\rightarrow \infty}\frac{m_{n}}{n}>0$ and $s$ symbolically takes one of values $0,1,2,2+,2\frac{1}{2},3,1+,2-,2_{Bd},2_{Bd}+$; before we go any further, we want to say that any $m_{n}$-distributionally chaotic sequence and, in particular, any $\lambda$-distributionally chaotic sequence, needs to be distributionally chaotic, i.e., $1$-distributionally chaotic, as well as that the converse statement is not true in general. 

The organization and main ideas of this paper can be briefly summarized as follows.
After collecting some necessary preliminaries about Fr\' echet spaces we are working with (separability or infinite-dimensionality is not assumed a priori), 
we provide basic definitions and properties of lower and upper (Banach) $m_{n}$-densities in Subsection \ref{publi}. In the second section of paper, we fix notions and introduce several different types of (reiterative) distributional chaos. In particular, we analyze distributional chaos of type
$s \in \{1,2,2_{Bd}+,2\frac{1}{2},3\},$ reiterative distributional chaos of type
$s \in \{0,1,1+,2-, 2,2+,2_{Bd},2_{Bd}+\}$ and Li-Yorke chaos; in Subsection \ref{pzi-dole}, we provide several observations, results and open problems for orbits of single operators in Fr\' echet spaces. In this subsection, we prove several results regarding the existence of distributionally chaotic operators which are not $m_{n}$-distributionally
chaotic for certain types of sequences $(m_{n}).$ In particular, the following is shown: Suppose that $X:=c_{0}({\mathbb N})$ or $X:=l^{p}({\mathbb N})$ for some $p\in [1,\infty).$
\begin{itemize} 
\item[(a)]  There exists a weighted forward shift operator $T\in L(X)$ which is $\lambda$-distributionally chaotic for any number $\lambda \in (0,1]$ and which additionally satisfies \eqref{doktorbalija}.
\item[(b)] For each number $\lambda \in (0,1],$ there exists a weighted forward shift operator $T\in L(X)$ satisfying \eqref{doktorbalija},
which is
$\lambda$-distributionally chaotic and not $\lambda'$-distributionally chaotic for any $\lambda'\in (0,\lambda).$ 
\item[(c)] For each numbers $a>0$ and $b \in (0,1),$ there exists a weighted forward shift operator $T\in L(X)$ satisfying \eqref{doktorbalija},
which is $(2^{an^{b}})$-distributionally chaotic.
\end{itemize}
Moreover, in the statements (a)-(c), the corresponding weight $(\omega_{j})_{j\in {\mathbb N}}$ can be chosen to consist of sufficiently large blocks of $2$'s and sufficiently large blocks of $(1/2)$'s.
In Proposition \ref{ekvivalentno}, we reconsider the notion of (dense) Li-Yorke chaos following an idea from \cite{bk}.
The main purpose of Section \ref{totijemarko} is to investigate associated notions of 
reiterative $m_{n}$-distributionally irregular vectors of type $s$ and reiterative $m_{n}$-distributionally irregular manifolds of type $s;$ in a series of results presented in Subsection \ref{profice}, we reconsider and slightly improve \cite[Proposition 7-Proposition 9, Theorem 12]{2013JFA} and  
\cite[Theorem 3.7, Corollary 3.12]{mendoza} for $m_{n}$-distributional chaos and $\lambda$-distributional chaos.
In contrast with orbits of linear continuous operators, the situation is much more complicated with general sequences because 
(reiterative) distributional chaos of type 
$s$ and Li-Yorke chaos can occur even in finite-dimensional spaces.
The main aim of Section \ref{marek-MLOss} is to analyze dense $m_{n}$-distributional chaos of type $s,$ dense reiterative $m_{n}$-distributional chaos of type $s$ 
and dense Li-Yorke chaos
for general sequences of linear operators in Fr\' echet spaces; as in our recent research study \cite{mendoza}, this 
section aims to show that the results about various types of dense (reiterative) distributional chaos of type
$s$ and dense Li-Yorke chaos can be formulated in this general setting. We extend the well-known result \cite[Theorem 3.5]{turkish-notes} by L. Luo and B. Hou, examining dense $m_{n}$-distributionally chaotic properties of the unilateral weighted backward shift operator in $l^{p}({\mathbb N})$ and $c_{0}({\mathbb N})$ spaces, where $1<p<\infty$ and the corresponding weight is given by $\omega_{j}:=2j/2j-1$ for $j\in {\mathbb N}$ (in contrast to the case $p=1$ considered in \cite{turkish-notes}, we obtain completely different results in case $p>1$).
In Subsection \ref{PDEs}, we introduce continuous analogues of the lower $m_{n}$-densities and, after clarifying some results for the families of linear not necessarily continuous operators defined on the non-negative real axis, we briefly explain how we can provide certain applications in the qualitative analysis of 
$f$-distributionally chaotic and $\lambda$-distributionally chaotic solutions of the abstract (fractional) partial differential equations in Fr\' echet spaces; here, $f : [0,\infty) \rightarrow [1,\infty)$ is an increasing mapping satisfying $\liminf_{t\rightarrow +\infty}\frac{f(t)}{t}>0.$
Section \ref{problemen} is reserved for giving final observations and open problems that we have not been able to solve (see also Problem \ref{prob1} and Problem \ref{prob2} proposed earlier, in Subsection \ref{pzi-dole}; the analysis of reiterative $m_{n}$-distributional chaos of type $s$ for composition operators is not carried out here).
Before explaining the notation used in the paper, the author would like to express his sincere gratitude to Prof. A. Bonilla, J. A. Conejero, M. Murillo-Arcila and X. Wu for many stimulating discussions during the preparation of manuscript.

Henceforth we assume that $X$ is a
Fr\' echet space over the field ${\mathbb K}\in \{{\mathbb R},\, {\mathbb C}\}$ and that the topology of $X$ is
induced by the fundamental system $(p_{n})_{n\in {\mathbb N}}$ of
increasing seminorms. The translation invariant metric\index{ translation invariant metric} $d :
X\times X \rightarrow [0,\infty),$ defined by
\begin{equation}\label{metri}
d(x,y):=\sum
\limits_{n=1}^{\infty}\frac{1}{2^{n}}\frac{p_{n}(x-y)}{1+p_{n}(x-y)},\
x,\ y\in X,
\end{equation}
enjoys the following properties:
$
d(x+u,y+v)\leq d(x,y)+d(u,v),\ x,\ y,\ u,\
v\in X,
$
$
d(cx,cy)\leq (|c|+1)d(x,y),\ c\in {\mathbb K},\ x,\ y\in X,
$
and
$
d(\alpha x,\beta x)\geq \frac{|\alpha-\beta|}{1+|\alpha-\beta|}d(0,x),\ x\in X,\ \alpha,\ \beta \in
{\mathbb K}.
$
By $Y$ we denote possibly another
Fr\' echet space over the same field of scalars as $X;$ the topology of $Y$ is induced by the
fundamental system $(p_{n}^{Y})_{n\in {\mathbb N}}$ of increasing
seminorms.
Define the translation invariant metric $d_{Y} :
Y\times Y \rightarrow [0,\infty)$ by replacing $p_{n}(\cdot)$ with
$p_{n}^{Y}(\cdot)$ in (\ref{metri}).
If
$(X,\|\cdot \|)$ or $(Y,\|\cdot \|_{Y})$ is a Banach space, then we assume that the
distance of two elements $x,\ y\in X$ ($x,\ y\in Y$) is given by $d(x,y):=\|x-y\|$ ($d_{Y}(x,y):=\|x-y\|_{Y}$).
Keeping in mind this terminological change,
our structural results clarified in Fr\' echet spaces continue to hold in the case that $X$ or $Y$ is a Banach space. 
If $A\subseteq {\mathbb N},$ we denote its complement by $A^{c};$ for any $s\in {\mathbb R},$ we set $\lfloor s \rfloor :=\sup \{
l\in {\mathbb Z} : s\geq l \}$ and $\lceil s \rceil:=\inf \{ l\in
{\mathbb Z} : s\leq l \}.$ Let us recall that an infinite subset $A$ of ${\mathbb N}$ is called syndetic iff its difference set, defined as usually, is bounded from above.

For a linear operator $A$ on $X,$ by $D(A),$ $R(A)$ and $\sigma_{p}(A)$ we denote its domain, range and point spectrum, respectively.
Suppose that $C\in L(X)$ is injective. Set $p_{n}^{C}(x):=p_{n}(C^{-1}x),$ $n\in {\mathbb N},$ $x\in R(C).$ Then
$p_{n}^{C}(\cdot)$ is a seminorm on $R(C)$ and the calibration
$(p_{n}^{C})_{n\in {\mathbb N}}$ induces a Fr\' echet locally convex topology on
$R(C);$ we denote this space simply by $[R(C)].$ Notice
that $[R(C)]$ is a Banach space provided that $X$ is. By $I$ we denote the identity operator on $X$ (in this paper, we analyze only single-valued linear operators; for various extensions in multi-valued setting and related results obtained recently, see \cite{ddc-emen-enes}-\cite{skopje} and our forthcoming monograph \cite{novascience}).

\subsection{Lower and upper densities}\label{publi}

Suppose that $A\subseteq {\mathbb N}.$ As it is well known, the lower density of $A,$ denoted by $\underline{d}(A),$ is defined by
$$
\underline{d}(A):=\liminf_{n\rightarrow \infty}\frac{|A \cap [1,n]|}{n},
$$
and the upper density of $A,$ denoted by $\overline{d}(A),$ is defined by
$$
\overline{d}(A):=\limsup_{n\rightarrow \infty}\frac{|A \cap [1,n]|}{n}.
$$
Further on, the lower Banach density of $A,$ denoted by $\underline{Bd}(A),$ is defined by
$$
\underline{Bd}(A):=\lim_{s\rightarrow +\infty}\liminf_{n\rightarrow \infty}\frac{|A \cap [n+1,n+s]|}{s}
$$
and the (upper) Banach density of $A,$ denoted by $\overline{Bd}(A),$ is defined by
$$
\overline{Bd}(A):=\lim_{s\rightarrow +\infty}\limsup_{n\rightarrow \infty}\frac{|A \cap [n+1,n+s]|}{s}.
$$
Then
\begin{align*}
0\leq \underline{Bd}(A) \leq \underline{d}(A) \leq \overline{d}(A) \leq \overline{Bd}(A)\leq 1,
\end{align*}
\begin{align*}
\underline{d}(A) + \overline{d}(A^{c})=1\mbox{ and } 
\underline{Bd}(A) +\overline{Bd}(A^{c})=1.
\end{align*}

The following notions of lower and upper densities for a subset $A\subseteq {\mathbb N}$ have been recently analyzed in \cite{F-operatori}:

\begin{defn}\label{prckojed} 
Suppose that $(m_{n})$ is an increasing sequence in $[1,\infty)$ and  $q\in [1,\infty).$ Then:
\begin{itemize}
\item[(i)] The lower $(m_{n})$-density of $A,$ denoted by $\underline{d}_{m_{n}}(A),$ is defined by
$$
\underline{d}_{{m_{n}}} (A):=\liminf_{n\rightarrow \infty}\frac{|A \cap [1,m_{n}]|}{n}.
$$
\item[(ii)] The upper $(m_{n})$-density of $A,$ denoted by $\overline{d}_{{m_{n}}}(A),$ is defined by
$$
\overline{d}_{{m_{n}}}(A):=\limsup_{n\rightarrow \infty}\frac{|A \cap [1,m_{n}]|}{n}.
$$
\item[(iii)] The lower $q$-density of $A,$ denoted by $\underline{d}_{q}(A),$ is defined by
$$
\underline{d}_{q}(A):=\liminf_{n\rightarrow \infty}\frac{|A \cap [1,n^{q}]|}{n}.
$$
\item[(iv)] The upper $q$-density of $A,$ denoted by $\overline{d}_{q}(A),$ is defined by
$$
\overline{d}_{q}(A):=\limsup_{n\rightarrow \infty}\frac{|A \cap [1,n^{q}]|}{n}.
$$

\end{itemize}
\end{defn}

We will use the following simple result:

\begin{lem}\label{perhane} 
Let  $q\geq 1$ and $A=\{ n_{1},\ n_{2}, \cdot \cdot \cdot,\  n_{k},\cdot \cdot \cdot \},$ where $(n_{k})$ is a strictly increasing sequence of positive integers. Then $\underline{d}_{q}(A)=\liminf_{k\rightarrow \infty}\frac{k}{n_{k}^{1/q}}$ and
$\underline{d}_{q}(A)>0$ iff there exists a finite constant $L>0$ such that $n_{k}\leq Lk^{q},$ $k\in {\mathbb N}.$ 
\end{lem}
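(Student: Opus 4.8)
The plan is to prove the two assertions in order, the crux being the first identity; the second then drops out by a short rearrangement. Throughout I write $N(x):=|A\cap[1,x]|$ for the counting function of $A$, so that $\underline{d}_{q}(A)=\liminf_{n\to\infty}N(n^{q})/n$, and I record the elementary duality $N(x)\geq k \iff x\geq n_{k}$, which for $x=n^{q}$ (taking $q$-th roots, which is monotone) yields the sandwich
\[
n_{k}^{1/q}\leq n<n_{k+1}^{1/q}\qquad\text{whenever }k=N(n^{q})\geq 1,\qquad(\star)
\]
Since $N(n^{q})\geq 1$ for all large $n$, both the upper bound $k/n\leq k/n_{k}^{1/q}$ and the lower bound $k/n> k/n_{k+1}^{1/q}=\frac{k}{k+1}\cdot\frac{k+1}{n_{k+1}^{1/q}}$ are then available from $(\star)$ with $k=N(n^{q})$.

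To prove $\liminf_{n}N(n^{q})/n\geq L$, where $L:=\liminf_{k}k/n_{k}^{1/q}$, I would pass to a subsequence $(n_{i})$ realizing the left-hand liminf, set $k_{i}:=N(n_{i}^{q})\to\infty$, and use the lower bound in $(\star)$ together with $k_{i}/(k_{i}+1)\to1$ and the fact that the tail infima $\inf_{j\geq k_{i}+1}\bigl(j/n_{j}^{1/q}\bigr)$ increase to $L$. For the reverse inequality I would instead test the left-hand liminf at the explicit integers $m_{i}:=\lfloor n_{k_{i}}^{1/q}\rfloor$, where $(k_{i})$ is chosen so that $k_{i}/n_{k_{i}}^{1/q}\to L$; since $m_{i}^{q}\leq n_{k_{i}}$ gives $N(m_{i}^{q})\leq k_{i}$, one obtains $N(m_{i}^{q})/m_{i}\leq k_{i}/\lfloor n_{k_{i}}^{1/q}\rfloor$, and the estimate $\lfloor n_{k_{i}}^{1/q}\rfloor>n_{k_{i}}^{1/q}-1$ together with $n_{k_{i}}\to\infty$ forces this to converge to $L$. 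Combining the two inequalities establishes the identity $\underline{d}_{q}(A)=\liminf_{k}k/n_{k}^{1/q}$.

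Granting the identity, the second claim is immediate. If $n_{k}\leq Lk^{q}$ for all $k$ then $k/n_{k}^{1/q}\geq L^{-1/q}>0$, whence $\underline{d}_{q}(A)\geq L^{-1/q}>0$. Conversely, if $\underline{d}_{q}(A)=L'>0$ then $k/n_{k}^{1/q}\geq L'/2$ for all $k\geq K$, i.e. $n_{k}\leq(2/L')^{q}k^{q}$ for such $k$; absorbing the finitely many indices $k<K$ into the constant (using $k^{q}\geq1$) produces a uniform $L$ with $n_{k}\leq Lk^{q}$ for every $k\in{\mathbb N}$.

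The only genuine difficulty is the first identity, and specifically the mismatch caused by integer rounding: the map $n\mapsto N(n^{q})$ need not take every value $k$, so one cannot literally "invert" the counting function and equate the two liminf expressions term by term. I expect the resolution sketched above---sandwiching via $(\star)$ and neutralising the rounding discrepancies through the convergent factors $k/(k+1)\to1$ and $n_{k}^{1/q}/(n_{k}^{1/q}-1)\to1$---to be the decisive and essentially only nontrivial step, with the rest amounting to routine bookkeeping.
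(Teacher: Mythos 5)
Your proof is correct. The paper states Lemma \ref{perhane} as ``the following simple result'' and supplies no proof at all, so there is nothing to compare against; your counting-function duality $N(x)\geq k \iff x\geq n_{k}$, the sandwich $(\star)$, and the two test-sequence arguments (with the rounding errors absorbed by the factors $k/(k+1)\to 1$ and $n_{k}^{1/q}/(n_{k}^{1/q}-1)\to 1$) constitute a complete and standard justification of exactly the kind the author presumably had in mind, and the deduction of the second assertion from the identity is likewise sound.
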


In our further work, the following notion from \cite{F-operatori} will be crucially important:

\begin{defn}\label{guptinjo} 
Let $(m_{n})$ be an increasing sequence in $[1,\infty),$ $q\in [1,\infty)$ and $A\subseteq {\mathbb N}.$ Then we define:
\begin{itemize}
\item[(i)] The lower $l;(m_{n})$-Banach density of $A,$ denoted shortly by $\underline{Bd}_{l;{m_{n}}}(A),$
by
$$
\underline{Bd}_{l;{m_{n}}}(A):=\liminf_{s\rightarrow +\infty}\liminf_{n\rightarrow \infty}\frac{|A \cap [n+1,n+m_{s}]|}{s}.
$$ 
\item[(ii)] The lower $u;(m_{n})$-Banach density of $A,$ denoted shortly by $\underline{Bd}_{u;{m_{n}}}(A),$
by
$$
\underline{Bd}_{u;{m_{n}}}(A):=\limsup_{s\rightarrow +\infty}\liminf_{n\rightarrow \infty}\frac{|A \cap [n+1,n+m_{s}]|}{s}.
$$ 
\item[(iii)] The lower $l;q$-Banach density of $A,$ denoted shortly by $\underline{Bd}_{l;q}(A),$
by
$$
\underline{Bd}_{l;q}(A):=\liminf_{s\rightarrow +\infty}\liminf_{n\rightarrow \infty}\frac{|A \cap [n+1,n+s^{q}]|}{s}.
$$ 
\item[(iv)] The lower $u;q$-Banach density of $A,$ denoted shortly by $\underline{Bd}_{u;q}(A),$
by
$$
\underline{Bd}_{u;q}(A):=\limsup_{s\rightarrow +\infty}\liminf_{n\rightarrow \infty}\frac{|A \cap [n+1,n+s^{q}]|}{s}.
$$
\end{itemize}
\end{defn}

The above notion is not completely explored even in the case that $m_{n}=n^{q}$ for some $q>1.$ For example,
we have operated with $\liminf_{n\rightarrow \infty}$ in all above expressions but not with $\inf_{n\in {\mathbb N}},$ which will cause some troubles in the proof of Theorem \ref{na-dobrorc} below (these notions are no longer equivalent in general case $m_{n}\neq n$). In order the proof of Theorem \ref{na-dobro3} to work, we need to slightly modify the notion introduced in \cite{F-operatori} by operating with $\sup_{n\rightarrow \infty}$ in place of $\limsup_{n\rightarrow \infty}:$ 

\begin{defn}\label{prcko-prim-merak}
Let $(m_{n})$ be an increasing sequence in $[1,\infty),$ $q\in [1,\infty)$ and $A\subseteq {\mathbb N}.$ Then we define:
\begin{itemize}
\item[(i)] The (upper) $l:(m_{n})$-Banach density of $A,$ denoted shortly by $\overline{Bd}_{l:{m_{n}}}(A),$
by
$$
\overline{Bd}_{l:{m_{n}}}(A):=\liminf_{s\rightarrow +\infty}\sup_{n\in {\mathbb N}}\frac{|A \cap [n+1,n+m_{s}]|}{s}.
$$ 
\item[(ii)] The (upper) $u:(m_{n})$-Banach density of $A,$ denoted shortly by $\overline{Bd}_{u:{m_{n}}}(A),$
by
$$
\overline{Bd}_{u:{m_{n}}}(A):=\limsup_{s\rightarrow +\infty}\sup_{n\in {\mathbb N}}\frac{|A \cap [n+1,n+m_{s}]|}{s}.
$$ 
\item[(iii)] The $l:q$-Banach density of $A,$ denoted shortly by $\overline{Bd}_{l:q}(A),$
by
$$
\overline{Bd}_{l:q}(A):=\liminf_{s\rightarrow +\infty}\sup_{n\in {\mathbb N}}\frac{|A \cap [n+1,n+s^{q}]|}{s}.
$$ 
\item[(iv)] The $u:q$-Banach density of $A,$ denoted shortly by $\overline{Bd}_{u:q}(A),$
as follows
$$
\overline{Bd}_{u:q}(A):=\limsup_{s\rightarrow +\infty}\sup_{n\in {\mathbb N}}\frac{|A \cap [n+1,n+s^{q}]|}{s}.
$$ 
\end{itemize}
\end{defn}

The analysis of above densities is completely without scope of this paper and we only want to mention that $
\overline{Bd}_{l:q}(A)$ can be strictly greater than the quantity 
$$
\overline{Bd}_{l;q}(A):=\liminf_{s\rightarrow +\infty}\limsup_{n\rightarrow +\infty}\frac{|A \cap [n+1,n+s^{q}]|}{s}
$$ 
analyzed in \cite{F-operatori}, provided that $q>1.$ For example, if $A:=\{n^{2} : n\in {\mathbb N}\}$ and $q=2,$ then it can be easily seen that 
 $
\overline{Bd}_{l:q}(A)\geq 1$ as well that $\overline{Bd}_{l;q}(A)=0$ since for each $q>0$ and $s\in {\mathbb N}$ one has $\limsup_{n\rightarrow +\infty}\frac{|A \cap [n+1,n+s^{q}]|}{s}\leq 1/s.$ 

We will use the following simple lemma, as well
(cf. \cite{F-operatori} and \cite{skopje}):

\begin{lem}\label{mile-duo}
Suppose that $A\subseteq {\mathbb N}.$
\begin{itemize}
\item[(i)] Let $\liminf_{n\rightarrow \infty}\frac{m_{n}}{n}>0.$ Then $
\underline{Bd}_{l;m_{n}}(A)=0$ iff $
\underline{Bd}_{u;m_{n}}(A)=0$ iff $A$ is finite or $A$ is infinite non-syndetic.
\item[(ii)] Let $\liminf_{n\rightarrow \infty}\frac{m_{n}}{n}>0.$ Then $
\underline{d}_{m_{n}}(A)>0$ provided that $A$ is syndetic.
\end{itemize}
\end{lem}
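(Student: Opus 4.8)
The plan is to reduce both parts to a single elementary counting estimate for syndetic sets. First I would record the observation that if $A$ is syndetic, say its consecutive gaps are bounded above by some $g\in {\mathbb N}$, then every block of $g$ consecutive integers must meet $A$ (otherwise two consecutive elements of $A$ would be more than $g$ apart); consequently any interval containing $L$ integers satisfies $|A\cap I|\geq \lfloor L/g\rfloor$. Applied to $I=[n+1,n+m_{s}]$ (which contains $\lfloor m_{s}\rfloor$ integers) and to $I=[1,m_{n}]$, this yields the uniform-in-$n$ lower bounds $|A\cap[n+1,n+m_{s}]|\geq (m_{s}/g)-2$ and $|A\cap[1,m_{n}]|\geq (m_{n}/g)-2$. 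For part (ii) I would then simply divide the second estimate by $n$ and pass to the $\liminf$, obtaining $\underline{d}_{m_{n}}(A)\geq \frac{1}{g}\liminf_{n\to\infty}\frac{m_{n}}{n}>0$ by the standing hypothesis, which is exactly the assertion.

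For part (i), I would first note the trivial inequalities $0\leq \underline{Bd}_{l;m_{n}}(A)\leq \underline{Bd}_{u;m_{n}}(A)$, so that vanishing of the upper quantity forces vanishing of the lower one. Writing $f(s):=\liminf_{n\to\infty}\frac{|A\cap[n+1,n+m_{s}]|}{s}$, I would then treat two mutually exclusive cases. If $A$ is finite, then the window $[n+1,n+m_{s}]$ misses $A$ for all large $n$, so $f(s)=0$ for every $s$; if $A$ is infinite but non-syndetic, then its gap sequence is unbounded, and the crucial combinatorial point is that unboundedness forces \emph{infinitely many} gaps to exceed any prescribed threshold, so for each fixed $s$ one may place $[n+1,n+m_{s}]$ inside a gap for arbitrarily large $n$, again giving $f(s)=0$. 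In either case $\underline{Bd}_{l;m_{n}}(A)=\underline{Bd}_{u;m_{n}}(A)=0$. Conversely, if $A$ is infinite and syndetic, the uniform lower bound from the counting estimate gives $f(s)\geq \frac{1}{g}\frac{m_{s}}{s}-\frac{2}{s}$, whence $\underline{Bd}_{l;m_{n}}(A)=\liminf_{s\to\infty}f(s)\geq \frac{1}{g}\liminf_{s\to\infty}\frac{m_{s}}{s}>0$.

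Assembling these pieces closes the three-way equivalence along the cycle: the dichotomy "finite or infinite non-syndetic" implies $\underline{Bd}_{u;m_{n}}(A)=0$ (hence also $\underline{Bd}_{l;m_{n}}(A)=0$ by monotonicity), while its negation "infinite syndetic" implies $\underline{Bd}_{l;m_{n}}(A)>0$; reading this contrapositively gives $\underline{Bd}_{l;m_{n}}(A)=0\Rightarrow(\text{finite or non-syndetic})\Rightarrow\underline{Bd}_{u;m_{n}}(A)=0$, completing the loop. I expect the only genuinely delicate point to be the step invoked in the non-syndetic case: one must argue that unboundedness of the gap sequence is equivalent to having infinitely many gaps above each bound, since it is precisely this strengthening that makes the \emph{inner} $\liminf_{n\to\infty}$ (and not merely a liminf along some sparse subsequence of $n$) vanish for every fixed $s$. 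Everything else is the routine bookkeeping of the two nested limits together with the hypothesis $\liminf_{n\to\infty}\frac{m_{n}}{n}>0$, which is what prevents the linear window length $m_{s}$ from being overwhelmed by the normalizing factor $s$.
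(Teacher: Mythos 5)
Your proof is correct. Note that the paper itself does not prove this lemma --- it is stated as a ``simple lemma'' with a pointer to \cite{F-operatori} and \cite{skopje} --- so there is no in-paper argument to compare against; your write-up supplies the missing details in the natural way. The two key points you isolate are exactly the right ones: the uniform counting bound $|A\cap I|\geq \lfloor L/g\rfloor$ for syndetic $A$, which handles part (ii) and the ``infinite syndetic $\Rightarrow \underline{Bd}_{l;m_{n}}(A)>0$'' direction of part (i), and the observation that an unbounded gap sequence must have infinitely many gaps above any threshold (if only finitely many exceeded $M$, the whole sequence would be bounded by the maximum of those finitely many gaps and $M$), which is what makes the inner $\liminf_{n\to\infty}$ vanish for every fixed $s$ in the non-syndetic case. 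The only imprecision is that the bound $|A\cap I|\geq\lfloor L/g\rfloor$ fails for intervals lying before $\min A$; since every density in the lemma is computed via a limit in $n$ (or over intervals $[1,m_{n}]$ with $m_{n}\to\infty$), this affects only finitely many terms and is harmless. The closing of the three-way equivalence along the cycle, using $\underline{Bd}_{l;m_{n}}(A)\leq\underline{Bd}_{u;m_{n}}(A)$ and the contrapositive of the syndetic lower bound, is clean.
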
 

\section{Reiterative $m_{n}$-distributional chaos of type $s$}\label{marek-MLOs}

Denote by ${\mathrm R}$ the class consisting of all increasing sequences  $(m_{n})$ of positive reals satisfying $\liminf_{n\rightarrow \infty}\frac{m_{n}}{n}>0,$ i.e., there exists a finite constant $L>0$ such that $n\leq L m_{n},$ $n\in {\mathbb N}.$ If this is the case, then for each positive constant $a>0$ we have that 
$(am_{n})\in {\mathrm R}.$ Unless stated otherwise, we assume that $(m_{n})\in {\mathrm R}$
henceforth.

%

In this section, it is assumed that, for every $j\in {\mathbb N},$ $T_{j} : D(T_{j}) \subseteq X \rightarrow Y$ is a linear operator, $T : D(T) \subseteq X \rightarrow X$ is a linear operator and $\tilde{X}$ is a non-empty
subset of $X.$
Let a number $\delta > 0$ and two elements $x,\ y\in \bigcap_{j\in {\mathbb N}}D(T_{j}) \bigcap \tilde{X}$ be given. We set
\begin{align*}
F_{x,y,m_{n}}(\delta):= \underline{d}_{m_{n}}\Bigl(\bigl\{j \in {\mathbb N} : d_{Y}\bigl(T_{j} x,T_{j} y\bigr) < \delta \bigr\}\Bigr),
\end{align*}
\begin{align*}
G_{x,y,m_{n}}(\delta):= \underline{d}_{m_{n}}\Bigl(\bigl\{j \in {\mathbb N} : d_{Y}\bigl(T_{j} x,T_{j} y\bigr) \geq \delta \bigr\}\Bigr),
\end{align*}
\begin{align*}
H_{x,y,m_{n}}(\delta):= \overline{d}_{m_{n}}\Bigl(\bigl\{j \in {\mathbb N} : d_{Y}\bigl(T_{j} x,T_{j} y\bigr) < \delta \bigr\}\Bigr),
\end{align*}
and
\begin{align*}
I_{x,y,m_{n}}(\delta):= \overline{d}_{m_{n}}\Bigl(\bigl\{j \in {\mathbb N} : d_{Y}\bigl(T_{j} x,T_{j} y\bigr) \geq \delta \bigr\}\Bigr).
\end{align*}

If a pair $(x,y)$ satisfies

 {(DC1)} $G_{x,y,m_{n}} \equiv 0$ and $F_{x,y,m_{n}}(\sigma) = 0$
              for some $\sigma > 0$, or

 {(DC2)} $G_{x,y,m_{n}} \equiv 0$ and $I_{x,y,m_{n}}(\sigma) >0$
              for some $\sigma > 0$, or

 {(DC2$\frac{1}{2}$)} There exist real numbers $c > 0$ and $r > 0$ such that
              $F_{x,y,m_{n}}(\delta) < c < H_{x,y,m_{n}}(\delta)$
              for all $ 0 < \delta < r,$ or

 {(DC3)} There exist real numbers $a > 0,$ $b > 0$ and $c>0$ such that
              $F_{x,y,m_{n}}(\delta) < c < H_{x,y,m_{n}}(\delta)$
              for all $ a < \delta < b,$

then $(x,y)$ is called a $\tilde{X}_{m_{n}}$-{\em distributionally chaotic pair of type}
$s \in \{1,2,2\frac{1}{2},3\}$ for $(T_{j})_{j\in
{\mathbb N}}$. The sequence $(T_{j})_{j\in
{\mathbb N}}$ is said to be
$\tilde{X}_{m_{n}}$-{\em distributionally chaotic of type} $s$ ($m_{n}$-{\em distributionally chaotic of type} $s,$ if $\tilde{X}=X$) if there exists an uncountable set
$S \subseteq X$ such that every pair $(x,y)$ of distinct points in
$S$ is a $\tilde{X}_{m_{n}}$-distributionally chaotic pair of type $s$ for $(T_{j})_{j\in
{\mathbb N}};$  the sequence $(T_{j})_{j\in
{\mathbb N}}$ is said to be
$\tilde{X}_{m_{n}}$-{\em distributionally chaotic} ($m_{n}$-{\em distributionally chaotic,} if $\tilde{X}=X$) iff there exist an uncountable set
$S \subseteq X$ and a number $\sigma>0$ such that for every pair $(x,y)$ of distinct points in
$S$ we have $G_{x,y,m_{n}} \equiv 0$ and $F_{x,y,m_{n}}(\sigma) = 0.$ Furthermore, if $S$ can be chosen to be dense in $\tilde{X},$ then we say that $(T_{j})_{j\in
{\mathbb N}}$ is {\em densely}
$\tilde{X}_{m_{n}}$-{\em distributionally chaotic of type} $s$ ({\em densely} $m_{n}$-{\em distributionally chaotic of type} $s,$ if $\tilde{X}=X$); {\em densely}
$\tilde{X}_{m_{n}}$-{\em distributionally chaotic} ({\em densely} $m_{n}$-{\em distributionally chaotic,} if $\tilde{X}=X$).

A linear operator $T : D(T) \subseteq X \rightarrow X$ is said to be ({\em densely})
$\tilde{X}_{m_{n}}${\em-distributionally chaotic} ({\it of type}) $s$ (({\em densely}) $m_{n}${\em-distributionally chaotic} ({\it of type} $s$), if $\tilde{X}=X$) iff the sequence $(T_{j}\equiv
T^{j})_{j\in {\mathbb N}}$ is. In this case,
$S$ is called a $\tilde{X}_{m_{n}}$-{\em distributionally scrambled set} ({\it of type $s$}) for the sequence $(T_{j})_{j\in
{\mathbb N}}$ (the operator $T$).

As mentioned on \cite[p. 798]{2018JMMA}, the notion of (dense) distributional chaos of type $1$ and the notion of (dense) distributional chaos coincide for operators on Fr\' echet spaces; after establishing Theorem \ref{ruza}, we will see that the same statement holds for the notion of (dense) $m_{n}$-distributional chaos of type $1$ and the notion of (dense) $m_{n}$-distributional chaos. On the other hand, the situation is completely different for general sequences of linear continuous operators:

\begin{example}\label{idioq}
It is clear that there exist two infinite sets $A,\ B\subseteq {\mathbb N}$ such that $\overline{d}(A)=\overline{d}(B)=1$ and ${\mathbb N}=A \cup B.$ Set $X:={\mathbb K},$
$T_{j}:=2I$ ($j\in A$) and $T_{j}:=0$ ($j\in B$). Using the fact that the set $S-S$ cannot be bounded away from zero if $S$ is an uncountable subset of ${\mathbb K},$ we can simply show that the sequence $(T_{j})$ is distributionally chaotic of type $1$ and not distributionally chaotic.
\end{example}

Before proceeding further, we would like to note that the $\lambda$-distributional chaos of sequence $ (T_{j})_{j\in {\mathbb N}}$ for some $\lambda \in (0,1]$ implies $\lambda'$-distributional chaos of $ (T_{j})_{j\in {\mathbb N}}$ for all numbers $\lambda'\in [\lambda,1].$ The converse statement is not true in general, as we will see later.

Next, for a given number $\delta > 0$, we set

\begin{align*}
BF_{x,y,m_{n}}(\delta):= \underline{Bd}_{l;m_{n}}\Bigl(\bigl\{j \in {\mathbb N} : d_{Y}\bigl(T_{j} x,T_{j} y\bigr) < \delta \bigr\}\Bigr),
\end{align*}
\begin{align*}
BG_{x,y,m_{n}}(\delta):= \underline{Bd}_{l;m_{n}}\Bigl(\bigl\{j \in {\mathbb N} : d_{Y}\bigl(T_{j} x,T_{j} y\bigr) \geq \delta \bigr\}\Bigr),
\end{align*}
and
\begin{align*}
BI_{x,y,m_{n}}(\delta):=  \overline{Bd}_{l:m_{n}}\Bigl(\bigl\{j \in {\mathbb N} : d_{Y}\bigl(T_{j} x,T_{j} y\bigr) \geq \delta \bigr\}\Bigr).
\end{align*}

\begin{defn}\label{uvodjenje}
We say that the sequence $(T_{j})_{j\in
{\mathbb N}}$ is $\tilde{X}_{m_{n}}$-{\it reiteratively distributionally chaotic of type $1 $} iff there exists an uncountable set $S\subseteq \bigcap_{j\in {\mathbb N}}D(T_{j}) \bigcap \tilde{X}$ such that
for every pair $(x,y) \in S \times S$ of distinct points $BF_{x,y,m_{n}}(\sigma) = 0$
for some $\sigma > 0$ and there exist $c \in (0,\liminf_{n\rightarrow \infty}\frac{m_{n}}{n})$ and $r > 0$ such that $G_{x,y,m_{n}}(\delta)\leq c$ for all $ 0 < \delta < r$. If $G_{x,y,m_{n}} \equiv 0$, we say that $(T_{j})_{j\in
{\mathbb N}}$ is $\tilde{X}_{m_{n}}$-{\it reiteratively distributionally chaotic of type $1+$.}

We say that $(T_{j})_{j\in
{\mathbb N}}$ is $\tilde{X}_{m_{n}}$-{\it reiteratively distributionally chaotic of type $2$} iff there exists an uncountable set $S\subseteq \bigcap_{j\in {\mathbb N}}D(T_{j}) \bigcap \tilde{X}$ such that
for every pair $(x,y) \in S \times S$ of distinct points we have $BG_{x,y,m_{n}}\equiv 0$  and $I_{x,y,m_{n}}(\sigma) >0$
for some $\sigma > 0$. 
Finally, we say that $(T_{j})_{j\in
{\mathbb N}}$ is  $\tilde{X}_{m_{n}}$-{\it (reiteratively) distributionally chaotic of type $2_{Bd}$} iff there exists an uncountable set $S\subseteq \bigcap_{j\in {\mathbb N}}D(T_{j}) \bigcap \tilde{X}$ such that
for every pair $(x,y) \in S \times S$ of distinct points we have ($BG_{x,y,m_{n}}\equiv 0$) $G_{x,y,m_{n}}\equiv 0$ and $BI_{x,y,m_{n}}(\sigma) >0$
for some $\sigma > 0$.

In the case that the number $\sigma>0$ does not depend on the choice of pair $(x,y) \in S \times S$, then we say that $(T_{j})_{j\in
{\mathbb N}}$ is $\tilde{X}_{m_{n}}$-{\it reiteratively distributionally chaotic of type $2+$} or $\tilde{X}_{m_{n}}$-{\it (reiteratively) distributionally chaotic of type $2_{Bd}+.$}
\end{defn}

A series of elaborate and very plain counterexamples shows that the conclusions established in our previous research study \cite{bk} are no longer true for general sequences of linear continuous operators, even on finite-dimensional spaces (for more details about this problematic, see \cite{ddc}). This also holds for $\tilde{X}_{m_{n}}$-reiterative distributional chaos of types $0,$ $1^{+}$ and $2-,$ which are introduced as follows: 

\begin{defn}\label{nizovi}
We say that the sequence $(T_{j})_{j\in
{\mathbb N}}$ is
$\tilde{X}_{m_{n}}$-{\it reiteratively distributionally chaotic of type $0,$} resp. $1^{+}$ [$2-$], iff 
there exist an uncountable
set $S \subseteq \bigcap_{j\in {\mathbb N}}D(T_{j}) \bigcap \tilde{X}$ and
$\sigma>0$ such that for each pair $x,\
y\in S$ of distinct points we have $BF_{x,y,m_{n}}(\sigma)=0$ and $BG_{x,y,m_{n}}\equiv 0, $ 
resp. $BF_{x,y,m_{n}}(\sigma)=0$ and $G_{x,y,m_{n}}\equiv 0$ [$F_{x,y,m_{n}}(\sigma)=0$ and $BG_{x,y,m_{n}}\equiv 0$]. 
\end{defn}

The notions of {\it reiteratively $\tilde{X}_{m_{n}}$-distributionally scrambled set of type $s$} and {\it dense } $\tilde{X}_{m_{n}}$-{\it reiterative distributional chaos of type $s,$} where \\ $s\in \{1,1^{+},1{+},2,2+,2_{Bd},2_{Bd}+,0,2-\}$ are introduced as above. 

It is worth noting that the notions introduced in Definition \ref{uvodjenje} and Definition \ref{nizovi} 
extend the notions introduced in \cite[Definition 1.3]{bk}, where we have only analyzed
the uniform reiterative distributional chaos of type $s$; e.g., in \cite{bk}, we have consider only the notion of reiterative distributional chaos of type $2+$ but not of type $2.$ 
Some implications trivially can be clarified, like reiterative distributional chaos of type $1^{+}$ implies reiterative distributional chaos of type $1+,$ which further implies reiterative distributional chaos of type $1.$

We will accept the following agreements. If $\tilde{X}=X$ or $m_{n}\equiv n,$ then we remove the prefixes ``$\tilde{X}$-'' and ``$m_{n}$-'' from the terms and notions. For example, dense reiterative
distributional chaos of type $3$ is dense reiterative $X_{n}$-distributional chaos of type $3.$ 
If $m_{n}\equiv n^{1/\lambda}$ for some $\lambda \in (0,1],$ then,
as a special case of the above definitions, we obtain the notions  of (dense, reiterative) $\tilde{X}_{\lambda}$-distributional chaos of type $s$,
(dense, reiterative) $\lambda$-distributional chaos of type $s$, (reiterative) $\tilde{X}_{\lambda}$-scrambled sets of type $s$ and (reiterative) $\lambda$-scrambled sets of type $s$ for operators and their sequences; 
for $\lambda=1,$ we obtain the notions of (dense, reiterative) $\tilde{X}$-distributional chaos of type $s$,
(dense, reiterative) distributional chaos of type $s$, (reiterative) $\tilde{X}$-scrambled sets of type $s$ and (reiterative) scrambled sets of type $s$ for operators and their sequences, and so on and so forth.

\begin{rem}\label{tres}
\begin{itemize}
\item[(i)] It is clear that, if $(m_{n}')\in {\mathrm R}$ and $m_{n}\leq m_{n}',$ $n\in {\mathbb N},$ then $(m_{n}')$-distributional chaos/$(m_{n}')$-distributional chaos of type $1$ or $2$ implies $(m_{n})$-distributional chaos/$(m_{n})$-distributional chaos of the same type, while reiterative $(m_{n}')$-distributional chaos of type $1,1^{+},1+,0$ or $2-$ implies reiterative $(m_{n})$-distributional chaos of the same type.
\item[(ii)] If $(m_{n})\in {\mathrm R},$ then $(m_{n})$-distributional chaos/$(m_{n})$-distributional chaos of type $1$ or $2$ implies distributional chaos/distributional chaos of the same type, while reiterative $(m_{n})$-distributional chaos of type $1,1^{+},1+,0$ or $2-$ implies reiterative distributional chaos of the same type. In particular, any $m_{n}$-distributionally chaotic sequence is distributionally chaotic. 
\item[(iii)]  It is worth noting that the distributional chaos of type $s\in \{1,2,3\}$ for backward shift operators in K\"othe sequence spaces has been analyzed for the first time by X. Wu et al \cite{xwuchen}, under certain assumptions other from ours.
\end{itemize}
\end{rem}

Finally, we will use the notion of Li-Yorke chaos below.
Li-Yorke chaos in Fr\'echet spaces has been recently investigated by N. C. Bernardes Jr et al \cite{band} and M. Kosti\' c \cite{nis-dragan} (see also \cite{2011}, \cite{ARXIV} and \cite{xwuchen}-\cite{xwu}): 

\begin{defn}\label{obori}
We say that the sequence
$(T_{j})_{j\in {\mathbb N}}$ is  $\tilde{X}$-{\em Li-Yorke chaotic}
iff there exists an uncountable set $S\subseteq \bigcap_{j\in {\mathbb N}}D(T_{j}) \bigcap \tilde{X}$ such that
for every pair $(x,y) \in S \times S$ of distinct points, we have
\[
 \liminf_{j \to \infty} d_{Y}\bigl(T_{j} x,T_{j} y\bigr) = 0
 \ \ \mbox{ and } \ \
 \limsup_{j \to \infty} d_{Y}\bigl(T_{j} x,T_{j} y\bigr)> 0.
\]
In this case, $S$ is called a  $\tilde{X}$-{\em scrambled set} for $(T_{j})_{j\in {\mathbb N}}$ and each
such pair $(x,y)$ is called a  $\tilde{X}$-{\em Li-Yorke pair} for $(T_{j})_{j\in {\mathbb N}}$. We say that $(T_{j})_{j\in {\mathbb N}}$ is {\it densely  $\tilde{X}$-Li-Yorke chaotic} iff $S$ can be chosen to be dense in $\tilde{X}.$
\end{defn}

We refer the reader to \cite{band} and \cite{nis-dragan} for the notion of Li-Yorke (semi-)irregular vectors.
Any notion introduced above is accepted also for 
a single linear continuous operator $T : D(T) \subseteq X \rightarrow X$ by using the sequence $(T_{j}\equiv
T^{j})_{j\in {\mathbb N}}$ for definition.  Finally, if $\tilde{X}=X,$ then we remove the prefix ``$\tilde{X}$-'' from the terms and notions.

Before we move ourselves to the next subsection, we  would like to present the following illustrative example:

\begin{example}\label{zelje}
Due to Proposition \ref{perhane},
if $\lambda \in (0,1]$ and $A=\{ n_{1},\ n_{2}, \cdot \cdot \cdot,\  n_{k},\cdot \cdot \cdot \},$ where $(n_{k})$ is a strictly increasing sequence of positive integers, then $\underline{d}_{1/\lambda}(A)=
0$ iff for any finite constant $L>0$ there exists $k\in {\mathbb N}$ such that $n_{k}> Lk^{1/\lambda}.$ Therefore, it is very simple to construct two disjoint subsets $A$ and $B$ of ${\mathbb N}$ such that ${\mathbb N}=A\cup B$ and $\underline{d}_{1/\lambda}(A)=\underline{d}_{1/\lambda}(B)=0$ for each number $\lambda \in (0,1];$ for example, set
$a_{n}:=\sum_{i=1}^{n}2^{2^{i^{2}}}$ ($n\in {\mathbb N}$), $A:=\bigcup_{n\in 2{\mathbb N}}[a_{n},a_{n+1}]$ and $B:={\mathbb N} \setminus A.$ After that, set 
$X:={\mathbb K}$, $T_{j}:=jI$ ($j\in A$) and $T_{j}:=0$ ($j\in B$). Then it can be simply checked that the sequence $(T_{j})_{j\in
{\mathbb N}}$ is densely
$\lambda$-distributionally chaotic for each number $\lambda \in (0,1]$, and that the corresponding scrambled set $S$ can be chosen to be the whole space $X.$ 
\end{example} 

\subsection{A few remarks and open problems for orbits of single operators}\label{pzi-dole}

We start this subsection with the observation that
the property (DC2) with $m_{n}\equiv n$ for $(T_{j})_{j\in
{\mathbb N}}$ is not equivalent with {\em the mean Li-Yorke chaos} for $(T_{j})_{j\in
{\mathbb N}},$ as for continuous mappings on compact metric spaces (cf. \cite{ARXIV} and \cite{2018JMMA}). Counterexamples exist even for orbits of weighted forward shift operators on $l^{p}$ spaces, where $1\leq p<\infty;$ for more details about the mean Li-Yorke chaos, the reader may consult \cite{ARXIV}, \cite{down} and references cited therein.

Further on, let us recall that there is no Li-Yorke chaotic operator on a Fr\' echet space that is compact (\cite{band}), so that the situation appearing in Example \ref{zelje} cannot occur for orbits on finite-dimensional spaces. Concerning infinite-dimensional spaces, the situation is completely different: there exists a continuous linear operator $T$ on $c_{0}({\mathbb N})$ or $ l^{p}({\mathbb N}),$ where $1\leq p<\infty,$ which is $\lambda$-distributionally chaotic for any number $\lambda \in (0,1],$ not hypercyclic and which additionally satisfies some other requirements. 
To verify this, we will prove the following proper extension of \cite[Theorem 25]{2018JMMA}; cf. also \cite[Remark 21]{2011}, \cite{ddc} and Theorem \ref{primer-primexsimex}:

\begin{thm}\label{primer}
Suppose that $X:=c_{0}({\mathbb N})$ or $X:=l^{p}({\mathbb N})$ for some $p\in [1,\infty).$ Then there exists a continuous linear operator $T$ on $X$ which is $\lambda$-distributionally chaotic for any number $\lambda \in (0,1]$ and which additionally satisfies \eqref{doktorbalija} 
as well as
$\lim_{j\rightarrow \infty}T^{j}x=0$ for some $x\in X$ iff $x=0.$ 
\end{thm}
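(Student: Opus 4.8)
The plan is to take $T$ to be a weighted forward shift on $X$ whose weights assume only the two values $2$ and $1/2$, grouped into rapidly growing alternating blocks. Precisely, set $B_{k}:=2^{2^{k^{2}}}$, partition ${\mathbb N}$ into consecutive intervals $I_{1},I_{2},\dots$ with $|I_{k}|=B_{k}$, and define $Te_{n}:=\omega_{n}e_{n+1}$, where $\omega_{j}:=2$ for $j\in I_{k}$ with $k$ odd and $\omega_{j}:=1/2$ for $j\in I_{k}$ with $k$ even. Since $\sup_{j}\omega_{j}=2$, the operator $T$ is continuous on $X$ with $\|T\|=2$, and, being a forward shift with non-dense range, it is injective and not hypercyclic. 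Writing $S_{k}:=\#\{i\le k:\omega_{i}=2\}-\#\{i\le k:\omega_{i}=1/2\}$, so that $\prod_{i=1}^{k}\omega_{i}=2^{S_{k}}$, I would reduce all three requirements to the behaviour of the single sequence $(S_{k})$. Indeed $\|T^{j}e_{1}\|=2^{S_{j}}$, while for arbitrary $x\neq 0$ with smallest nonzero coordinate index $n_{0}$ one has, in both $c_{0}$ and $l^{p}$, the bound $\|T^{j}x\|\ge |x_{n_{0}}|\,2^{S_{n_{0}+j-1}-S_{n_{0}-1}}$.

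Because the odd blocks push $S_{k}$ upward, the block-end values $S_{\max I_{k}}=\sum_{i\le k}(-1)^{i+1}B_{i}$ are dominated by $+B_{k}$ for odd $k$, whence $\limsup_{j}2^{S_{j}}=+\infty$; combined with the displacement bound this yields $\limsup_{j}\|T^{j}x\|=+\infty$ for every $x\neq 0$, and hence $\lim_{j\to\infty}T^{j}x=0$ if and only if $x=0$. For \eqref{doktorbalija}, the same displacement bound reduces matters to proving $\frac{1}{N}\sum_{k=1}^{N}2^{S_{k}}\to+\infty$. Since $\sum_{k\le N}2^{S_{k}}$ is comparable to its largest term $2^{M_{N}}$ with $M_{N}:=\max_{k\le N}S_{k}$, the decisive case is $N$ at the end of an even block $I_{k}$, where the orbit is exponentially small: there $N\approx B_{k}$ and $M_{N}\approx B_{k-1}$, and the estimate survives because $\log_{2}\bigl(2^{B_{k-1}}/B_{k}\bigr)=B_{k-1}-\log_{2}B_{k}=2^{2^{(k-1)^{2}}}-2^{k^{2}}\to+\infty$. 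The remaining ranges of $N$ (interior of low or high blocks) are handled by the same comparison and are less restrictive.

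For the $\lambda$-distributional chaos, required for all $\lambda\in(0,1]$ simultaneously, I would take the uncountable set $S:=\{ce_{1}:c\in(0,1]\}$. For distinct $c,c'$ the orbital distance is $\|T^{j}((c-c')e_{1})\|=|c-c'|\,2^{S_{j}}$, so, with $m_{n}=n^{1/\lambda}$, the quantities $G_{x,y,m_{n}}$ and $F_{x,y,m_{n}}$ for the pair $(x,y)=(ce_{1},c'e_{1})$ read $G_{x,y,m_{n}}(\delta)=\underline{d}_{1/\lambda}\bigl(\{j:S_{j}\ge \log_{2}(\delta/|c-c'|)\}\bigr)$ and $F_{x,y,m_{n}}(\sigma)=\underline{d}_{1/\lambda}\bigl(\{j:S_{j}<\log_{2}(\sigma/|c-c'|)\}\bigr)$. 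Hence it suffices to prove that, for every fixed $M\in{\mathbb R}$, both $\{j:S_{j}\ge M\}$ and $\{j:S_{j}<M\}$ have lower $(1/\lambda)$-density zero for each $\lambda\in(0,1]$. This is exactly where the block growth is used: at the right endpoint of an odd (respectively even) block $I_{k}$, located at position $\approx B_{k}$, the low (respectively high) set has met only the earlier blocks and therefore has about $B_{k-1}$ elements, and evaluating the defining $\liminf$ of Definition \ref{prckojed} along these endpoints yields a ratio $\approx B_{k-1}/B_{k}^{\lambda}=2^{2^{(k-1)^{2}}-\lambda 2^{k^{2}}}\to 0$, exactly as in the computation of Example \ref{zelje} via Lemma \ref{perhane}. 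Consequently $G_{x,y,m_{n}}\equiv 0$ and $F_{x,y,m_{n}}(\sigma)=0$ for every $\sigma>0$ and every distinct pair, so $S$ is an uncountable distributionally scrambled set, with the single constant $\sigma=1$, simultaneously for all $\lambda\in(0,1]$.

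The main obstacle is the genuine tension between \eqref{doktorbalija} and the chaos requirement. Forcing $\{j:S_{j}\ge M\}$ and $\{j:S_{j}<M\}$ to have vanishing lower density for every exponent $1/\lambda$ compels the blocks to grow doubly exponentially, each dominating all of its predecessors; yet \eqref{doktorbalija} demands that the Cesàro means of $\|T^{j}x\|$ diverge even as $N$ runs across these enormously long low blocks, on which the orbit is exponentially small. The two demands pull against each other, and the construction survives only because the orbital peaks have the triply exponential size $2^{B_{k-1}}$, which still overwhelms the doubly exponential block length $B_{k}\approx N$. Verifying this domination uniformly in $N$, rather than merely along the subsequence of peaks, is the delicate point; everything else follows the pattern already recorded in Example \ref{zelje}.
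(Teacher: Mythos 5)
Your construction is essentially the paper's own: a weighted forward shift whose weights alternate between blocks of $2$'s and $(1/2)$'s of doubly exponential lengths ($B_k=2^{2^{k^2}}$ versus the paper's $b_n=2^{2^{(2n-1)^2}}$, $a_n=2^{2^{(2n)^2}}$, which are the same sequence indexed by parity), with the $\lambda$-chaos checked via lower $(1/\lambda)$-densities at block endpoints and \eqref{doktorbalija} via the triply exponential orbital peaks dominating the block lengths. The argument is correct, and your explicit reduction of the case of general $x\neq 0$ to the sequence $(S_k)$ via the displacement bound is in fact slightly more careful than the paper, which only verifies \eqref{doktorbalija} for $x=e_1$.
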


\begin{proof}
Without loss of generality, we may assume that $X=l^{2}({\mathbb N})= l^{2}.$ Consider
a weighted forward shift $T\equiv F_{\omega}: 
l^{2} \rightarrow l^{2},$ defined by $F_{\omega} (x_{1}, x_{2}, \cdot \cdot \cdot) \mapsto (0, \omega_{1}x_{1},\omega_{2}x_{2},\cdot \cdot \cdot),$ where the sequence of weights $\omega =(\omega_{k})_{k\in {\mathbb N}}$ consists of sufficiently large blocks of $2$'s of lengths $b_{1}, \ b_{2},\cdot \cdot \cdot,$ and sufficiently large blocks of $(1/2)$'s of lengths $a_{1}, \ a_{2},\cdot \cdot \cdot .$ 
More precisely, let $b_{n}:=2^{2^{(2n-1)^{2}}}$ 
and $a_{n}:=2^{2^{(2n)^{2}}}$ ($n\in {\mathbb N}$). Let a number $\lambda \in (0,1)$  be fixed. To see that $T$ is $\lambda$-distributionally chaotic,
 it suffices to show that for each $n\in {\mathbb N}$ and we have $\underline{d}_{1/\lambda}(\{j\in {\mathbb N} : \|T^{j}e_{1}\|<2^{n}\})=0$
and $\underline{d}_{1/\lambda}(\{j\in {\mathbb N} : \|T^{j}e_{1}\|>2^{-n}\})=0,$ where $e_{1} = (1, 0, 0,\cdot \cdot \cdot).$ Towards this end,
observe that there exist a finite subset $A\subseteq {\mathbb N}$ and a number $n_{0}\in {\mathbb N}$ 
such that $\{j\in {\mathbb N} : \|T^{j}e_{1}\|\leq 2^{n}\} \subseteq A\cup \bigcup_{s\geq n_{0}}[2(b_{1}+b_{2}+\cdot \cdot \cdot +b_{s})-n,2(a_{1}+a_{2}+\cdot \cdot \cdot +a_{s})+n]\equiv B.$ Let $L>0$ be arbitrarily chosen. Then there exists sufficiently large $s\in {\mathbb N}$ such that, with $j=(2n+1)(s-1)+\sum_{w=1}^{s-1}(s-w)(a_{w}-b_{w})$ and $n_{j}=2\sum_{w=1}^{s}b_{w}-n,$ we have $n_{j}>Lj^{1/\lambda}.$ Therefore, $\underline{d}_{1/\lambda}(B)=0$ due to  Proposition \ref{perhane}. The second equation can be proved analogously, so that $T$ is $\lambda$-distributionally chaotic. Furthermore, it is clear that $T$ cannot be hypercyclic as well as that the condition $\lim_{j\rightarrow \infty}T^{j}x=0$ for some $x\in l^{2}$ implies $x=0.$
To complete the proof, it suffices to show that \eqref{doktorbalija} holds with $x=e_{1}.$ To estimate the term $\frac{1}{N}\sum_{j=1}^{N}\|T^{j}e_{1}\|,$ we will consider separately two possible cases:

1. There exists $n\in {\mathbb N}$ such that  $N\in [b_{n}+\sum_{s=1}^{n
-1}(a_{s}+b_{s}),\sum_{s=1}^{n
}(a_{s}+b_{s})].$ Then we have
\begin{align}
\notag \frac{1}{N}&\sum_{j=1}^{N}\bigl \|T^{j}e_{1}\bigr\|=\frac{1}{N}\sum_{j=1}^{N}\omega_{1}\omega_{2}\cdot \cdot \cdot \omega_{j}\geq \frac{1}{2na_{n}}\sum_{j=\sum_{s=1}^{n-1}2a_{s}}^{b_{n}+\sum_{s=1}^{n
-1}(a_{s}+b_{s})}\omega_{1}\omega_{2}\cdot \cdot \cdot \omega_{j}
 \\\notag & \geq   \frac{1}{2na_{n}}\sum_{j=0}^{b_{n}-a_{n-1}}2^{j}\geq \frac{2^{b_{n}-a_{n-1}}
-1}{2na_{n}}\geq \frac{2^{a_{n-1}}
-1}{2na_{n}}\rightarrow +\infty,\quad n\rightarrow +\infty.
\end{align}

2. There exists $n\in {\mathbb N}$ such that  $N\in [\sum_{s=1}^{n
-1}(a_{s}+b_{s}),b_{n}+\sum_{s=1}^{n
-1}(a_{s}+b_{s})].$ Then we have
\begin{align}
\notag \frac{1}{N}&\sum_{j=1}^{N}\bigl \|T^{j}e_{1}\bigr\|=\frac{1}{N}\sum_{j=1}^{N}\omega_{1}\omega_{2}\cdot \cdot \cdot \omega_{j}\geq \frac{1}{2nb_{n}}\sum_{j=\sum_{s=1}^{n-2}2a_{s}}^{b_{n-1}+\sum_{s=1}^{n
-2}(a_{s}+b_{s})}\omega_{1}\omega_{2}\cdot \cdot \cdot \omega_{j}
 \\\notag & \geq   \frac{1}{2nb_{n}}\sum_{j=0}^{b_{n-1}-a_{n-2}}2^{j}\geq \frac{2^{b_{n-1}-a_{n-2}}
-1}{2nb_{n}}\geq \frac{2^{a_{n-2}}
-1}{2nb_{n}}\rightarrow +\infty,\quad n\rightarrow +\infty.
\end{align}
The proof of the theorem is thereby complete.
\end{proof}

\begin{rem}\label{inverz-lambda}
Our construction is much more easier and transparent than the construction employed in \cite[Theorem 25]{2018JMMA} for case $\lambda=1.$ 
Arguing as in \cite[Remark 26]{2018JMMA} and the proof of Theorem \ref{primer}, we can prove the existence of an invertible continuous linear operator $T$ on $X:=c_{0}({\mathbb Z})$ or $X:=l^{p}({\mathbb Z})$ for some $p\in [1,\infty),$ satisfying the all required properties from Theorem \ref{primer}.
\end{rem}

Let $\lambda \in (0,1].$ 
Recall that the class of dynamical systems on the plane ${\mathbb R}^{2},$ named winding systems, have been introduced in \cite[Section 5]{xiong} in order to show that the
$\lambda$-distributional chaos of an operator $T$ on a compact metric space does not imply $\lambda'$-distributional chaos of $T$  for any number $\lambda '\in (0,\lambda).$ 
Before solving in the affirmative the corresponding problem for orbits of single-valued linear operators in Banach spaces satisfying the equation \eqref{doktorbalija}, 
we would like to show (cf. also Corollary \ref{cea1} and Example \ref{idiote} below) that there exist a continuous linear operator $T$ on $X:=c_{0}({\mathbb N})$ and a sequence $(P_{j}(z))_{j\in {\mathbb N}}$ of non-zero real polynomials such that sequence $ (T_{j}\equiv P_{j}(T))_{j\in {\mathbb N}}$ in $L(X)$ satisfies that there exists a dense linear submanifold $X_{0}$ of $X$ with $\lim_{j\rightarrow \infty}T_{j}x=0,$ $x\in X_{0},$
as well as that $ (T_{j})_{j\in {\mathbb N}}$ is 
$\lambda$-distributionally chaotic and not $\lambda'$-distributional chaotic for any $\lambda'\in (0,\lambda):$ 

\begin{example}\label{manjoza}
Suppose that $0<\lambda'<\lambda ,$ $X:=c_{0}({\mathbb N}),$ $a_{n}:=\lfloor n^{2/\lambda}\ln n\rfloor$ and $b_{n}:=\lfloor n^{2/\lambda}\ln n\rfloor +n$ ($n\in {\mathbb N}$). Set $S:=\bigcup_{n\geq 2}[b_{n-1},a_{n}).$  Then it can be easily seen that
$$
\underline{d}_{1/\lambda}\bigl(S^{c}\bigr)\leq \liminf_{n\rightarrow \infty} \frac{1+2+\cdot \cdot \cdot +n}{a_{n}^{\lambda}}=0
$$ and 
$$
\underline{d}_{1/\lambda'}\bigl(S^{c}\bigr)\geq \liminf_{n\rightarrow +\infty}\min\Biggl(\frac{1+2+\cdot \cdot \cdot +(n-1)}{b_{n}^{\lambda'}},\frac{1+2+\cdot \cdot \cdot +n}{a_{n}^{\lambda'}}\Biggr)=+\infty .
$$
Set $T\langle x_{1},x_{2},x_{3},\cdot \cdot \cdot \rangle:=\langle 2x_{2},2x_{3},2x_{4},\cdot \cdot \cdot \rangle$ 
for all $\langle x_{n}\rangle_{n\in {\mathbb N}},$ as well as $T_{j}:=T^{j},$ if $j\in S,$ and $T_{j}:=2^{-j}T,$
if $j\in S^{c}.$ 
Since the finite linear combinations of the vectors $e_{n}$ from the standard basis of $c_{0}$ forms a dense submanifold satisfying the precribed assumption $\lim_{j\rightarrow \infty}T_{j}x=0,$ $x\in X_{0}$ and the vector $\langle \frac{1}{n}\rangle_{n\in {\mathbb N}}$ is $\lambda$-distributionally unbounded for $ (T_{j})_{j\in {\mathbb N}},$ Corollary \ref{na-dobrorade} implies that 
$ (T_{j})_{j\in {\mathbb N}}$ is densely $\lambda$-distibutionally chaotic. On the other hand, $ (T_{j})_{j\in {\mathbb N}}$ cannot be $\lambda'$-distributionally chaotic since for each $x\in X$ and $\sigma >0$ we have the existence of a finite set $D\subseteq {\mathbb N}$ such that $S^{c} \setminus D \subseteq \{j\in {\mathbb N} : \|T_{j}x\|<\sigma\}$ and therefore $\underline{d}_{1/\lambda'}(\{j\in {\mathbb N} : \|T_{j}x\|<\sigma\})=+\infty.$
\end{example}

We continue by stating the following existence type result closely related with Theorem \ref{primer} and Example \ref{manjoza}:

\begin{thm}\label{primer-primexsimex}
Suppose that $X:=c_{0}({\mathbb N})$ or $X:=l^{p}({\mathbb N})$ for some $p\in [1,\infty).$ Then for each number $\lambda \in (0,1]$ there exists a continuous linear operator $T$ on $X$ satisfying \eqref{doktorbalija},
$\lim_{j\rightarrow \infty}T^{j}x=0$ for some $x\in X$ iff $x=0,$ 
which is
$\lambda$-distributionally chaotic and not $\lambda'$-distributionally chaotic for any $\lambda'\in (0,\lambda).$ 
\end{thm}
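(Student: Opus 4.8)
The plan is to run the construction of Theorem~\ref{primer} again, but to replace its doubly-exponential block lengths --- which produce $\lambda$-distributional chaos for \emph{every} $\lambda\in(0,1]$ --- by block lengths carrying a logarithmic correction, in the spirit of Example~\ref{manjoza}, so that the governing densities separate the two exponents $\lambda$ and $\lambda'$. As there, I may assume $X=l^{2}$ and take $T\equiv F_{\omega}$ to be the weighted forward shift whose weight sequence consists of alternating blocks: a block of $2$'s of length $b_{1}$, then a block of $(1/2)$'s of length $a_{1}$, then a block of $2$'s of length $b_{2}$, and so on. Writing $\ell_{j}:=\log_{2}\prod_{k=1}^{j}\omega_{k}$ for the level function (increasing by $1$ along each $2$ and decreasing by $1$ along each $1/2$), one has $\|T^{j}e_{1}\|=2^{\ell_{j}}$ and, for general $x$ with first non-zero coordinate of index $i_{0}$, the elementary lower bound $\|T^{j}x\|\geq |x_{i_{0}}|\,2^{\ell_{i_{0}+j-1}-\ell_{i_{0}-1}}$. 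The whole proof is driven by the two level-crossing sets $\{j:\ell_{j}\geq -C\}$ (where the orbit is ``large'') and $\{j:\ell_{j}<c\}$ (where it is ``small'').

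Next I would fix the block lengths. Choosing an integer $K>2/\lambda^{2}$ and setting $a_{n}:=2^{K^{\,n-1}}$ (so $a_{n}=a_{n-1}^{K}$) and $b_{n}:=\lceil a_{n}^{\lambda}/\ln(a_{n}+2)\rceil$, I would record the consequences $b_{n}/a_{n}^{\lambda}\to 0$, $b_{n}/a_{n}^{\lambda'}\to\infty$ and $a_{n-1}/b_{n}^{\lambda}\to 0$, together with $a_{n}-b_{n}\to\infty$ and $b_{n}-a_{n-1}\to\infty$; moreover each $a_{n}$ dominates the sum of all preceding block lengths, so $\sum_{s\leq n}(a_{s}+b_{s})\sim a_{n}$ and the level attains peaks of height $\sim b_{n}$ and valleys of depth $\sim -a_{n}$. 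The gain over Theorem~\ref{primer} is exactly the logarithmic factor: it keeps $a_{n-1}$ and $b_{n}$ strictly between the $\lambda'$- and the $\lambda$-powers of the ambient scale, which is precisely what Lemma~\ref{perhane} turns into the desired density dichotomy.

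I would then establish the two halves, abbreviating by $G_{z},F_{z}$ the quantities $G_{x,y,m_{n}},F_{x,y,m_{n}}$, which for linear $T$ depend only on $z:=x-y$. For $\lambda$-distributional chaos it suffices to exhibit $e_{1}$ as a $1/\lambda$-distributionally irregular vector. Counting crossings and invoking Lemma~\ref{perhane}: along the valley-ends the interval $[1,\sum_{s\leq n}(a_{s}+b_{s})]\approx[1,a_{n}]$ meets $\{j:\ell_{j}\geq -C\}$ in $\approx b_{n}$ points, whence $\underline{d}_{1/\lambda}(\{j:\ell_{j}\geq -C\})\leq\lim b_{n}/a_{n}^{\lambda}=0$ for every $C$, giving $G_{e_{1}}\equiv 0$; along the peak-ends $[1,\approx b_{n}]$ meets $\{j:\ell_{j}<c\}$ in $\approx a_{n-1}$ points, whence $\underline{d}_{1/\lambda}(\{j:\ell_{j}<c\})\leq\lim a_{n-1}/b_{n}^{\lambda}=0$, giving $F_{e_{1}}(\sigma)=0$ for all $\sigma>0$. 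As scaling $e_{1}$ leaves both statements intact, $S:=\{\alpha e_{1}:\alpha\in(0,1]\}$ is a $1/\lambda$-scrambled set (which may be upgraded to a dense one via Subsection~\ref{profice}, although this is not required). For the failure of $\lambda'$-chaos the crucial simplification is that I need only the easy \emph{lower} bound: for any $z\neq 0$ and all small $\delta>0$ one has $\{j:\|T^{j}z\|\geq\delta\}\supseteq\{j:\ell_{i_{0}+j-1}\geq C'\}$ for some constant $C'$, and a bounded index shift does not affect densities, so with $m_{n}=n^{1/\lambda'}$ one gets $G_{z}(\delta)\geq\underline{d}_{1/\lambda'}(\{m:\ell_{m}\geq C'\})$. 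The same valley-end count gives this last quantity $\geq\lim b_{n}/a_{n}^{\lambda'}=+\infty>0$, so $G_{z}\not\equiv 0$ for every $z\neq 0$, no pair satisfies (DC1), and $T$ is not $\lambda'$-distributionally chaotic.

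Finally, \eqref{doktorbalija} and the implication ``$\lim_{j}T^{j}x=0\Rightarrow x=0$'' reduce, through the first-coordinate lower bound $\|T^{j}x\|\geq |x_{i_{0}}|2^{\ell_{i_{0}+j-1}-\ell_{i_{0}-1}}$, to the corresponding facts for $e_{1}$, which follow just as in Theorem~\ref{primer}: the peaks make $2^{\ell_{j}}$ reach $\sim 2^{b_{n}}$, and this exponential magnitude dwarfs the merely polynomial block scale $N\lesssim a_{n}$, forcing both the Ces\`aro means to diverge and $\limsup_{j}\|T^{j}x\|=+\infty$. The step I expect to be the real obstacle is the simultaneous calibration of the second paragraph: one must keep $a_{n-1}$ and $b_{n}$ inside the narrow window between the $\lambda'$- and $\lambda$-powers (so that orbit-large is $1/\lambda$-null yet $1/\lambda'$-positive and orbit-small is $1/\lambda$-null) while \emph{also} forcing the valleys to descend and the peaks to ascend without bound; verifying that the extrema of the ratios $|\{j\leq m\}\cap\,\cdot\,|/m^{\lambda}$ and $|\{j\leq m\}\cap\,\cdot\,|/m^{\lambda'}$ are attained at the block-ends (a short monotonicity argument on the frozen-count stretches) is the technical heart of the density computations.
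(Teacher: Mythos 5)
Your argument is correct, but it is calibrated quite differently from the paper's, and the difference is not cosmetic. The paper transplants the block pattern of Example~\ref{manjoza} wholesale: it prescribes the \emph{cumulative} block sums $2(b_{1}+\cdots+b_{n})=\lfloor n^{4/\lambda}\ln n\rfloor+n^{3}$ and $2(a_{1}+\cdots+a_{n})=\lfloor (n+1)^{4/\lambda}\ln (n+1)\rfloor$, so that the set where the orbit of $e_{1}$ is large consists of intervals of length $\sim n^{3}$ placed at positions $\sim n^{4/\lambda}\ln n$, and then reads off its two density claims exactly as in that example. You instead keep the doubly exponential \emph{individual} block lengths of Theorem~\ref{primer} (your $a_{n}=a_{n-1}^{K}$) and put the logarithmic correction into the ratio $b_{n}\approx a_{n}^{\lambda}/\ln a_{n}$. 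These calibrations are not interchangeable, and yours is the one that actually delivers the (DC1)-type conclusion: with the paper's polynomial cumulative sums the orbit-large set does satisfy $\underline{d}_{1/\lambda}=0$ and $\underline{d}_{1/\lambda'}=+\infty$, but the orbit-small set $\{j:\|T^{j}e_{1}\|<2^{k}\}$ then fills all but $O\bigl(m^{\lambda}(\ln m)^{-\lambda}\bigr)$ of every initial segment $[1,m]$, so by Lemma~\ref{perhane} its $r$-th element obeys $n_{r}\leq 2r$ eventually and $\underline{d}_{1/\lambda}=\liminf_{r}r/n_{r}^{\lambda}\geq \liminf_{r}r^{1-\lambda}/2^{\lambda}>0$ (indeed $+\infty$ when $\lambda<1$); hence $F_{x,y,m_{n}}(\sigma)=0$ fails for the pairs $(\alpha e_{1},\beta e_{1})$ and the second half of the paper's claim 2.\ does not follow from Example~\ref{manjoza}. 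Your choice avoids this precisely because at the scale of the $n$-th peak the orbit has been small on only $\approx a_{n-1}$ indices out of $\approx b_{n}$, and $K\lambda^{2}>2$ forces $a_{n-1}=o(b_{n}^{\lambda})$ --- the mechanism of Theorem~\ref{primer} --- while the factor $1/\ln a_{n}$ simultaneously yields $\underline{d}_{1/\lambda}(\{\text{orbit large}\})=0$ and $\underline{d}_{1/\lambda'}(\{\text{orbit large}\})=+\infty$. The remaining items (reduction of \eqref{doktorbalija} and of non-convergence to $0$ to the first coordinate, the single-peak bound $\frac{1}{N}\sum_{j\leq N}\|T^{j}e_{1}\|\geq 2^{b_{n-1}}/N$, and the check that the infimum of the counting ratios along the large set occurs at the starts of its constituent intervals) are routine and you have identified them correctly.
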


\begin{proof}
Let $\lambda \in (0,1]$ and $\lambda'\in (0,\lambda).$ 
As above, we may assume without loss of generality that $X=l^{2}.$ The construction of a weighted forward shift $T\equiv F_{\omega}: 
l^{2} \rightarrow l^{2},$ defined by $F_{\omega} (x_{1}, x_{2}, \cdot \cdot \cdot) \mapsto (0, \omega_{1}x_{1},\omega_{2}x_{2},\cdot \cdot \cdot),$ where the sequence of weights $\omega =(\omega_{k})_{k\in {\mathbb N}}$ consists of sufficiently large blocks of $2$'s of lengths $b_{1}, \ b_{2},\cdot \cdot \cdot,$ and sufficiently large blocks of $(1/2)$'s of lengths $a_{1}, \ a_{2},\cdot \cdot \cdot$
now goes as follows. Set 
$$
a_{0}:=0\ \ ,\ \ a_{n}:=\frac{1}{2}\Bigl[ \lfloor (n+1)^{4/\lambda}\ln (n+1)\rfloor -\lfloor n^{4/\lambda}\ln n\rfloor \Bigr],\quad n\in {\mathbb N}
$$
and
$$
b_{n}:=a_{n-1}+\frac{1}{2}\bigl( 3n^{2}-3n+1\bigr),\quad n\in {\mathbb N}.
$$
Then 
$$
2\bigl( b_{1}+\cdot \cdot \cdot +b_{n}\bigr)=\lfloor n^{4/\lambda}\ln n \rfloor +n^{3},\quad n\in {\mathbb N}
$$
and
$$
2\bigl( a_{1}+\cdot \cdot \cdot +a_{n}\bigr)=\lfloor (n+1)^{4/\lambda}\ln (n+1) \rfloor ,\quad n\in {\mathbb N}.
$$
Arguing as in the previous theorem, we get that the existence
of an integer $n\in {\mathbb N}$ such that  $N\in [b_{n}+\sum_{s=1}^{n
-1}(a_{s}+b_{s}),\sum_{s=1}^{n
}(a_{s}+b_{s})]$ implies
\begin{align*}
\frac{1}{N}&\sum_{j=1}^{N}\bigl \|T^{j}e_{1}\bigr\| \geq \frac{2^{b_{n}-a_{n-1}}
-1}{2na_{n}},
\end{align*}
as well as that the existence 
of an integer $n\in {\mathbb N}$ such that  $N\in [\sum_{s=1}^{n
-1}(a_{s}+b_{s}),b_{n}+\sum_{s=1}^{n
-1}(a_{s}+b_{s})]$ implies
\begin{align*}
\frac{1}{N}&\sum_{j=1}^{N}\bigl \|T^{j}e_{1}\bigr\| \geq   \frac{1}{2nb_{n}}\sum_{j=0}^{b_{n-1}-a_{n-2}}2^{j}\geq \frac{2^{b_{n-1}-a_{n-2}}
-1}{2nb_{n}}.
\end{align*}
Since $\lim_{j\rightarrow \infty}T^{j}x=0$ for some $x\in X$ iff $x=0$ and
$$
\lim_{n\rightarrow +\infty}\frac{2^{b_{n}-a_{n-1}}}{2n\min(a_{n},b_{n-1})}=+\infty,
$$
the foregoing arguments show that
it is sufficient to show that
the following holds for each integer $k\in {\mathbb Z}:$
\begin{itemize}
\item[1.] $\underline{d}_{1/\lambda'}\bigl( \{  j\in {\mathbb N}  : \|T^{j}e_{1}\|>2^{k} \} \bigr)=+\infty ,$ 
\item[2.] $\underline{d}_{1/\lambda}\bigl( \{  j\in {\mathbb N}  : \|T^{j}e_{1}\|>2^{k} \} \bigr)=0$ and $\underline{d}_{1/\lambda}\bigl( \{  j\in {\mathbb N}  : \|T^{j}e_{1}\|<2^{k} \} \bigr)=0.$
\end{itemize}
 But, [1.-2.] can be verified to be true on the basis of our consideration from Example \ref{manjoza}.
\end{proof}

\begin{rem}\label{inverz-lambdainr}
Arguing as in \cite[Remark 26]{2018JMMA} and the proof of above theorem, we can prove the existence of an invertible continuous linear operator $T$ on $X:=c_{0}({\mathbb Z})$ or $X:=l^{p}({\mathbb Z})$ for some $p\in [1,\infty),$ satisfying the all required properties from Theorem \ref{primer-primexsimex}.
\end{rem}

Now we state the following useful proposition:

\begin{prop}\label{propa}
Suppose that $T$ is a Banach space, $T\in L(X)$ and \eqref{doktorbalija} holds.
Then we have:
\begin{itemize}
\item[(i)] Let $(m_{n}) \in {\mathrm R}$ and there exist $n_{0}\in {\mathbb N}$ such that $m_{n}\geq n\|T\|^{n},$ $n\in {\mathbb N},$ $n\geq n_{0}.$ Then $T$ cannot be $m_{n}$-distributionally chaotic (of type $1$) of types $2,$ $2_{Bd}$
or reiteratively $m_{n}$-distributionally chaotic of types $1+,$
$1^{+}.$ 
\item[(ii)]  Let for each $c>0$ there exist $n_{0}\in {\mathbb N}$ such that $(m_{n}) \in {\mathrm R}$ and $m_{n}\geq n\|T\|^{cn},$ $n\in {\mathbb N},$ $n\geq n_{0}.$ Then $T$ cannot be reiteratively $m_{n}$-distributionally chaotic of type $1.$
\end{itemize}
\end{prop}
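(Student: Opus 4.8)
The plan is to reduce every notion appearing in (i) to one density statement and then prove it by a counting estimate powered by \eqref{doktorbalija}. First I would record that each of the five notions in (i) --- $m_{n}$-distributional chaos (equivalently type $1$), type $2$, (non-reiterative) type $2_{Bd}$, reiterative type $1+$ and reiterative type $1^{+}$ --- requires, for every pair $(x,y)$ of distinct points of the putative scrambled set $S$, that $G_{x,y,m_{n}}\equiv 0$ (this is the common ingredient in (DC1), (DC2), in Definition \ref{uvodjenje} and in Definition \ref{nizovi}). By linearity $G_{x,y,m_{n}}(\delta)=\underline{d}_{m_{n}}(A_{z,\delta})$, where $z:=x-y\neq 0$ and $A_{z,\delta}:=\{j\in{\mathbb N}:\|T^{j}z\|\geq \delta\}$. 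Hence it suffices to prove that for every $z\in X\setminus\{0\}$ and every $\delta>0$ one has $\underline{d}_{m_{n}}(A_{z,\delta})>0$; this forces any admissible $S$ to be a singleton, so none of the five types can occur. I also note that \eqref{doktorbalija} forces $\|T\|>1$, for otherwise $\|T^{j}z\|\leq\|z\|$ would keep the means $\frac{1}{N}\sum_{j=1}^{N}\|T^{j}z\|$ bounded.

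The core is the following estimate. Fix $z\neq 0$ and $\delta>0$, put $s_{j}:=\|T^{j}z\|$, $N:=\lfloor m_{n}\rfloor$, $M_{N}:=|A_{z,\delta}\cap[1,N]|$ and $C_{N}:=\frac{1}{N}\sum_{j=1}^{N}s_{j}$; by \eqref{doktorbalija} we have $C_{N}\to+\infty$. The decisive observation is that, although $s_{j}\leq\|T\|^{j}\|z\|$ may be astronomically large for $j$ near $N$, inside a maximal block of consecutive indices from $A_{z,\delta}$ the values are governed by the block length. Precisely, for $j\in A_{z,\delta}\cap[1,N]$ let $i(j)$ be the largest element of $\{0\}\cup(A_{z,\delta}^{c}\cap[1,j-1])$; then $i(j)+1,\dots,j$ all lie in $A_{z,\delta}\cap[1,N]$, so submultiplicativity gives $s_{j}\leq\|T\|^{\,j-i(j)}\,s_{i(j)}\leq\|T\|^{\,j-i(j)}\max(\delta,\|z\|)$ with depth $j-i(j)\leq M_{N}$. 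Writing $K:=\max(\delta,\|z\|)$ and summing, the indices in $A_{z,\delta}$ contribute at most $KM_{N}\|T\|^{M_{N}}$ while those in $A_{z,\delta}^{c}$ contribute less than $\delta N$, so
\[
NC_{N}\leq \delta N+KM_{N}\|T\|^{M_{N}},\qquad\text{i.e.}\qquad M_{N}\|T\|^{M_{N}}\geq \frac{N(C_{N}-\delta)}{K}
\]
for all large $n$ (where $C_{N}>\delta$). I expect this block bound to be the main obstacle, since it is exactly the point at which one must prevent a few enormous orbit values from single-handedly sustaining the divergent average.

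For (i) I would insert $N\geq n\|T\|^{n}-1$. If $\liminf_{n}M_{N}/n<1$, then along a subsequence $M_{N}\leq(1-\eta)n$ for some $\eta>0$, whence $M_{N}\|T\|^{M_{N}}\leq(1-\eta)n\|T\|^{(1-\eta)n}$; comparing with $\frac{n\|T\|^{n}(C_{N}-\delta)}{K}$ gives $(1-\eta)K\geq\|T\|^{\eta n}(C_{N}-\delta)\to+\infty$, which is absurd. Thus $\underline{d}_{m_{n}}(A_{z,\delta})\geq 1>0$ for every $\delta>0$, so $G_{x,y,m_{n}}\not\equiv 0$ for any distinct $x,y$, completing (i). For (ii), reiterative type $1$ requires, for distinct $x,y$, some $c\in(0,\liminf_{n}m_{n}/n)$ and $r>0$ with $G_{x,y,m_{n}}(\delta)\leq c$ on $(0,r)$. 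Here I would run the same estimate under the stronger hypothesis $N\geq n\|T\|^{cn}-1$ for an arbitrary fixed $c>0$, repeating the subsequence argument with exponent $cn$ in place of $n$: if $\liminf_{n}M_{N}/n<c$ one obtains $\|T\|^{\eta n}(C_{N}-\delta)$ bounded, a contradiction, so $\underline{d}_{m_{n}}(A_{z,\delta})\geq c$. Since $c>0$ is arbitrary, $G_{x,y,m_{n}}(\delta)=\underline{d}_{m_{n}}(A_{z,\delta})=+\infty$ for every $\delta>0$ and every $z\neq 0$; hence no finite $c$ can bound $G_{x,y,m_{n}}(\delta)$ and reiterative $m_{n}$-distributional chaos of type $1$ cannot occur.
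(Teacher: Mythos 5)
Your proof is correct and follows essentially the same route as the paper's: both reduce the five notions to the common requirement $G_{x,y,m_{n}}\equiv 0$, bound the orbit on the at most $M_{N}$ ``large'' indices of $[1,m_{n}]$ by a constant times $\|T\|^{M_{N}}$ via the block/submultiplicativity argument, and contradict \eqref{doktorbalija} using $m_{n}\geq n\|T\|^{n}$. The only difference is presentational: you run the contrapositive and obtain the quantitative bounds $\underline{d}_{m_{n}}(A_{z,\delta})\geq 1$ (resp. $=+\infty$), and you write out the block estimate and part (ii), which the paper declares ``almost trivial'' and omits.
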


\begin{proof}
We will prove only (i), for $m_{n}$-distributional chaos. Assume to the contrary that $T$ is $m_{n}$-distributionally chaotic and \eqref{doktorbalija} holds. Then $\|T\|>1$ and it is not difficult to see that there exist a non-zero vector $x\in X$ and a strictly increasing sequence $(n_{k})$ of positive integers satisfying that there exists $k_{0}\in {\mathbb N}$ such that
$k_{0}\geq n_{0}$ and the interval $[1,m_{n_{k}}]$ contains at least $m_{n_{k}}-n_{k}$   
integers $j\in {\mathbb N}$ for which $\|T^{j}x\|\leq 1$ ($k\geq k_{0}$). For any other integer $j\in [1, m_{n_{k}}],$ it is almost trivial to show that $\|T^{j}x\|\leq \|T\|^{n_{k}},$ so that 
\begin{align*}
\frac{1}{m_{n_{k}}}\sum_{i=1}^{m_{n_{k}}}\bigl \|T^{j}x\bigr\| \leq \frac{1}{m_{n_{k}}}\Bigl[ m_{n_{k}}-n_{k} +n_{k}\bigl\|T\bigr\|^{n_{k}}\Bigr]\leq 2,
\end{align*}
for $k\geq k_{0}.$ This clearly contradicts \eqref{doktorbalija}, so that $T$ is not $m_{n}$-distributionally chaotic. 
\end{proof}

As already shown in Theorem \ref{primer-primexsimex}, the notions of $\lambda$-distributional chaos and $\lambda'$-distributional chaos do not coincide for orbits of linear continuous operators on Banach spaces ($\lambda, \ \lambda'\in (0,1],$ $\lambda \neq \lambda'$). Using Proposition \ref{propa}, it is almost trivial to show that the notions being $\lambda$-distributionally chaotic for each number $\lambda \in (0,1]$ and being
$m_{n}$-distributionally chaotic for each sequence $(m_{n}) \in {\mathrm R}$ do not coincide for orbits of linear continuous operators on Banach spaces, as well:

\begin{example}\label{primeremen}
Consider
the weighted forward shift $ T\equiv F_{\omega}: 
l^{2} \rightarrow l^{2}$ from Theorem \ref{primer}. Then we know that $T$ is $\lambda$-distributionally chaotic for each number $\lambda \in (0,1]$ and \eqref{doktorbalija} holds.
Let $(m_{n}) \in {\mathrm R}$ and $m_{n}\geq n\|T\|^{n},$ $n\in {\mathbb N}.$ Due to Proposition \ref{propa}, $T$ cannot be $(m_{n})$-distributionally chaotic (of type $1$) of types $2,$ $2_{Bd}$
or reiteratively distributionally chaotic of types $1+,$
$1^{+}.$  
\end{example}

Keeping in mind Theorem \ref{primer} and Proposition \ref{propa}, it is quite natural to ask whether there exists a weighted forward shift operator $T\equiv F_{\omega}$ satisfying \eqref{doktorbalija} and the additional property that $T$ is $m_{n}$-distributionally chaotic for some sequence $(m_{n}) \in {\mathrm R}$ growing ultrapolynomially at plus infinity. As the next theorem shows, the answer is affirmative:

\begin{thm}\label{pripazise-pricuvajse}
Suppose that $X:=c_{0}({\mathbb N})$ or $X:=l^{p}({\mathbb N})$ for some $p\in [1,\infty).$ Then for each numbers $a>0$ and $b \in (0,1)$ there exists a weighted forward shift operator $T$ on $X$ satisfying \eqref{doktorbalija},
$\lim_{j\rightarrow \infty}T^{j}x=0$ for some $x\in X$ iff $x=0,$ which is $(2^{an^{b}})$-distributionally chaotic.
\end{thm}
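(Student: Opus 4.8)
The plan is to reproduce the architecture of Theorem \ref{primer} and Theorem \ref{primer-primexsimex}: assume without loss of generality that $X = l^2$, construct a single weighted forward shift $F_\omega$ whose weight alternates blocks of $2$'s (lengths $b_n$) and blocks of $(1/2)$'s (lengths $a_n$), and verify that $e_1$ satisfies the density conditions defining $(2^{an^b})$-distributional chaos together with \eqref{doktorbalija}. Writing $E_j := \log_2\|F_\omega^j e_1\| = \#\{i \le j : \omega_i = 2\} - \#\{i \le j : \omega_i = 1/2\}$, the exponent increases by $1$ along each $2$-block and decreases by $1$ along each $(1/2)$-block. Let $Q_n$ and $P_n$ denote the right endpoints of the $n$-th $2$-block and the $n$-th $(1/2)$-block; the whole problem reduces to calibrating $a_n, b_n$.

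First I would fix the block lengths by a tower-type recursion, e.g. $a_0 := 1$, $b_n := \lceil 2^{n a_{n-1}^b}\rceil$ and $a_n := \lceil 2^{n b_n^b}\rceil$. The point of these choices is the pair of growth relations $\log_2 b_n = n a_{n-1}^b(1+o(1))$ and $\log_2 a_n = n b_n^b(1+o(1))$, which force $a_{n-1} = o((\log_2 b_n)^{1/b})$ and $b_n = o((\log_2 a_n)^{1/b})$; one also has $a_{n-1}\ll b_n \ll a_n$ and $b_n^{1-b}\gg n$, whence $2^{b_n}\gg a_n$. With these, $Q_n = b_n(1+o(1))$, $P_n = a_n(1+o(1))$, the peak value is $E_{Q_n} = b_n(1+o(1))$ and the trough value is $E_{P_n} = -a_n(1+o(1))$; in particular $\|F_\omega^{Q_n}e_1\|\to\infty$, so $F_\omega$ is non-hypercyclic and $\lim_j F_\omega^j x = 0$ forces $x = 0$ (the first nonzero coordinate of $x$ produces an unbounded orbit along the peaks, exactly as in Theorem \ref{primer}).

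The core step is the density computation, which I would carry out directly at well-chosen scales rather than through Lemma \ref{perhane} (whose stated form only covers $m_n = n^q$; the needed $m_n = 2^{an^b}$-analogue is immediate and says $\underline d_{2^{an^b}}(\{n_1 < n_2 < \cdots\}) = 0$ iff $\liminf_k k/(\log_2 n_k)^{1/b} = 0$). Fix a threshold $2^{k_0}$. Counting along blocks gives $|\{j \le P_n : E_j \ge k_0\}| = \Theta(b_n)$ and $|\{j \le Q_n : E_j < k_0\}| = \Theta(a_{n-1})$, the dominant contributions coming from the last $2$-block and the previous $(1/2)$-block respectively. Since consecutive scales satisfy $m_{\nu+1}/m_\nu = 2^{a((\nu+1)^b - \nu^b)}\to 1$, for each large $n$ I can choose $\nu$ with $m_\nu \le P_n$ and $\log_2 m_\nu = \log_2 P_n(1+o(1))$, giving $\nu = (n/a)^{1/b} b_n(1+o(1))$ and therefore $|\{j \le m_\nu : E_j \ge k_0\}|/\nu \le \Theta(b_n)/((n/a)^{1/b} b_n)\to 0$; as $k_0$ is arbitrary this yields $G_{e_1,0,m_n}\equiv 0$. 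Choosing instead $\nu$ with $m_\nu\le Q_n$ gives $\nu = (n/a)^{1/b} a_{n-1}(1+o(1))$ and $|\{j \le m_\nu : E_j < k_0\}|/\nu\to 0$, i.e. $F_{e_1,0,m_n}(\sigma) = 0$. Thus $e_1$ is a $(2^{an^b})$-distributionally irregular vector, and, by the same reduction used in Theorem \ref{primer} (a single such vector yields an uncountable scrambled set), $F_\omega$ is $(2^{an^b})$-distributionally chaotic.

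The main obstacle is that the verification of \eqref{doktorbalija} cannot be copied verbatim from Theorem \ref{primer}, because here the $2$-blocks are astronomically longer than the preceding peaks, so the old bound $2^{b_{n-1}-a_{n-2}}/(2nb_n)$ tends to $0$ rather than to $\infty$. Instead I would split according to the position of $N$: in the bulk of the $n$-th $2$-block (once $E_N > b_{n-1}$) and throughout the $n$-th $(1/2)$-block one bounds $\frac1N\sum_{j\le N}\|F_\omega^j e_1\|$ below by the current (already enormous) value $2^{E_N}/N$, with $2^{b_n}/a_n = 2^{b_n - n b_n^b}\to\infty$ doing the work; in the short initial part of the $2$-block, where $E_N \le b_{n-1}$ and hence $N = a_{n-1}(1+o(1))$, one falls back on the previous peak, and the single recurring estimate $2^{b_{n-1}}/a_{n-1} = 2^{b_{n-1} - (n-1)b_{n-1}^b}\to\infty$ (which is exactly where $b\in(0,1)$, via $b_{n-1}^{1-b}\gg n$, is used) closes the remaining case. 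The transfer from $l^2$ to $c_0({\mathbb N})$ and $l^p({\mathbb N})$, and the passage from $e_1$ to arbitrary $x\ne 0$ in \eqref{doktorbalija}, are routine and identical to the earlier proofs.
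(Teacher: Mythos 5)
Your proposal is correct, and it reaches the conclusion by a genuinely different calibration than the paper's. The paper keeps the architecture of Theorem \ref{primer} almost intact: it takes $b_{1}:=2,$ $b_{n+1}:=\lfloor n^{-2}2^{b_{n}/2}\rfloor$ and makes the $(1/2)$-blocks tiny ($a_{n}=b_{n}+1$) except at the sparse indices $n=4^{k}+1,$ where $a_{n}:=\lceil 2^{[k(b_{1}+\cdots+b_{4^{k}+1})]^{b}-1}\rceil$ is inserted precisely so that the two lower-density conditions (its conditions (iv)--(v)) hold after the $4^{k}+k$ bookkeeping, while \eqref{doktorbalija} is inherited verbatim from the Theorem \ref{primer} estimates via $b_{n+1}\geq 2a_{n}$ and $\lim_{n}2^{b_{n}/2}/(nb_{n+1})=+\infty.$ You instead use the symmetric mutual recursion $b_{n}=\lceil 2^{na_{n-1}^{b}}\rceil,$ $a_{n}=\lceil 2^{nb_{n}^{b}}\rceil,$ which builds the relations $a_{n-1}=o((\log_{2}b_{n})^{1/b})$ and $b_{n}=o((\log_{2}a_{n})^{1/b})$ directly into the definition, so the density computation at the scales $m_{\nu}\approx Q_{n}$ and $m_{\nu}\approx P_{n}$ becomes a one-line ratio $\Theta(a_{n-1})/((n/a)^{1/b}a_{n-1})$ resp.\ $\Theta(b_{n})/((n/a)^{1/b}b_{n}),$ with no sparse-index combinatorics; the price is that, as you correctly diagnose, the Theorem \ref{primer} bound $2^{b_{n-1}-a_{n-2}}/(2nb_{n})$ collapses (your $b_{n}$ is doubly exponential in $b_{n-1}$), and you must re-prove \eqref{doktorbalija} by the current-position/previous-peak dichotomy, with $2^{b_{n}-nb_{n}^{b}}\to\infty$ carrying the argument --- which is the same place ($b<1$) where the paper invokes \eqref{mangupe-derane}. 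Both routes are sound; yours trades the paper's delicate index bookkeeping for a slightly longer Ces\`aro estimate. One small wording correction: in the $n$-th $(1/2)$-block the usable lower bound is the peak term $2^{E_{Q_{n}}}/N\geq 2^{b_{n}(1-o(1))}/a_{n}(1+o(1)),$ not the ``current value'' $2^{E_{N}}/N$ (which decays along the block); your quantitative claim $2^{b_{n}}/a_{n}\to\infty$ makes clear this is what you intend, but the sentence should say so.
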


\begin{proof}
Without loss of generality, we may assume that $a=1$ and $X=l^{2}.$ We construct a weighted forward shift $T\equiv F_{\omega}: 
l^{2} \rightarrow l^{2},$ defined by $F_{\omega} (x_{1}, x_{2}, \cdot \cdot \cdot) \mapsto (0, \omega_{1}x_{1},\omega_{2}x_{2},\cdot \cdot \cdot),$ where the sequence of weights $\omega =(\omega_{k})_{k\in {\mathbb N}}$ consists of sufficiently large blocks of $2$'s of lengths $b_{1}, \ b_{2},\cdot \cdot \cdot,$ and sufficiently large blocks of $(1/2)$'s of lengths $a_{1}, \ a_{2},\cdot \cdot \cdot$
as done below. 
First of all, let
us recall that for each number $n\in {\mathbb Z}$ there exist two finite subsets $A,\ A'\subseteq {\mathbb N}$ and two numbers $n_{0},\ n_{0}'\in {\mathbb N}$ 
such that $\{j\in {\mathbb N} : \|T^{j}e_{1}\| \leq 2^{n}\} = A\cup \bigcup_{s\geq n_{0}}[2(b_{1}+b_{2}+\cdot \cdot \cdot +b_{s})-n,2(a_{1}+a_{2}+\cdot \cdot \cdot +a_{s})+n]$
and
 $\{j\in {\mathbb N} : \|T^{j}e_{1}\| \geq 2^{n}\} = A'\cup \bigcup_{s\geq n_{0}'}[2(a_{1}+a_{2}+\cdot \cdot \cdot +a_{s})+n,2(b_{1}+b_{2}+\cdot \cdot \cdot +b_{s+1})-n].$ Keeping in mind this fact as well as the final part of proof of Theorem \ref{primer}, the points [1.-2.],
it suffices to construct two strictly increasing sequences $(a_{n})$ and $(b_{n})$ of natural numbers satisfying that there exists an integer $n_{1}\in {\mathbb N}$ such that the following holds:
\begin{itemize}
\item[(i)] $b_{n}<a_{n}<b_{n+1}<a_{n+1}<\cdot \cdot \cdot$ for $n\geq n_{1};$
\item[(ii)] $b_{n+1}\geq 2a_{n}$ for $n\geq n_{1};$
\item[(iii)] $\lim_{n\rightarrow \infty}\frac{2^{b_{n}/2}}{nb_{n+1}}=+\infty;$
\item[(iv)] $\lim_{k\rightarrow \infty}\frac{a_{1}+\cdot \cdot \cdot +a_{4^{k}+k}}{(\log_{2}2b_{4^{k}+k+1})^{1/b}}=0;$
\item[(v)] $\lim_{k\rightarrow \infty}\frac{b_{1}+\cdot \cdot \cdot +b_{4^{k}+1}}{(\log_{2}2a_{4^{k}+1})^{1/b}}=0.$
\end{itemize}
We define the sequence $(b_{n})$ inductively by $b_{1}:=2$ and $b_{n+1}:=\lfloor n^{-2}2^{b_{n}/2} \rfloor$ for $n\geq 1.$ Then (iii) holds and, since $b\in (0,1)$, it is checked at once that for each finite number $\sigma>0$ we have
\begin{align}\label{mangupe-derane}
\lim_{n\rightarrow \infty}\frac{n^{\sigma}b_{n}}{(\log_{2}2b_{n+1})^{1/b}}=0.
\end{align}
Next, we define the sequence $(a_{n})$ in the following way: If there exists $k\in {\mathbb N}$ such that $n=4^{k}+1,$ then we set 
$$
a_{n}:=\Bigl\lceil 2^{[k(b_{1}+\cdot \cdot \cdot+b_{4^{k}+1})]^{b}-1}\Bigr\rceil;
$$ 
otherwise, we set $a_{n}:=b_{n}+1.$ Then 
$$
\frac{b_{1}+\cdot \cdot \cdot +b_{4^{k}+1}}{(\log_{2}2a_{4^{k}+1})^{1/b}}=1/k,\quad k\in {\mathbb N}
$$ 
and (v) trivially holds. Furthermore, the requirements (i)-(ii) can be also trivially verified due to the condition $b\in (0,1).$ To prove (iii), observe that there exist sufficiently large finite numbers $k_{0}\in {\mathbb N}$ and $d>0$ such that for all integers $k\geq k_{0}$ one has:
\begin{align*}
\frac{a_{1}+\cdot \cdot \cdot +a_{4^{k}+k}}{(\log_{2}2b_{4^{k}+k+1})^{1/b}}
\leq \frac{k_{0}d+(4^{k}+k)(b_{4^{k}+k}+1)}{(\log_{2}2b_{4^{k}+k+1})^{1/b}}.
\end{align*} 
Using this estimate, the requirement (iii) follows by applying \eqref{mangupe-derane}. This completes the proof of theorem.
\end{proof}

We continue by stating the following extension of \cite[Proposition 1.1]{bk}, where we observe that the ($m_{n}$-)reiterative distributional chaos of type $0$ for orbits of linear continuous operators on Banach spaces is equivalent with the Li-Yorke chaos. Furthermore, we reexamine the same question for operators on Fr\' echet spaces and obtain only some partial results in this direction:

\begin{prop}\label{ekvivalentno}
Suppose that $T\in L(X)$ and $(m_{n}) \in {\mathrm R}$. Consider the following statements: 
\begin{itemize}
\item[(a)]$T$ is (densely) Li-Yorke chaotic.
\item[(b)] $T$ is (densely) $m_{n}$-reiteratively distributionally chaotic of type $0.$
\item[(c)] $T$ is (densely) reiteratively distributionally chaotic of type $0.$
\end{itemize}
Then we have the following:
\begin{itemize}
\item[(i)] If $X$ is a Banach space, then the statements \emph{(a),} \emph{(b)} and \emph{(c)} are mutually equivalent.
\item[(ii)] If $X$ is a Fr\'echet space, then we have \emph{(b)} $\Rightarrow$ \emph{(c)} $\Rightarrow$ \emph{(a)}. Moreover, the validity of \emph{(a)} implies that there exist an uncountable set $S\subseteq X,$ a positive integer $m\in {\mathbb N}$ and a strictly increasing sequence $(j_{k})$ in ${\mathbb N}$ such that
$\lim_{j\rightarrow \infty}p_{m}(T^{j_{k}}x)=+\infty$ as well as that for each number $s>0$ there exist a finite number $F(s)>0$ and an integer $k_{0}=k_{0}(s)\in {\mathbb N}$ such that for each pair $y,\ z\in S$ of distinct points 
we have $BG_{y,z,m_{n}}\equiv 0$ and
\begin{align}\label{zubice}
\Bigl\{ j\in {\mathbb N} : d(T^{j}y,T^{j}z)<F(s) \Bigr\} \cap \bigl[j_{k}-\lceil m_{s}\rceil,j_{k}\bigr]=\emptyset \ \ \mbox{ for all } \ \ k\geq k_{0}. 
\end{align}
\end{itemize}
\end{prop}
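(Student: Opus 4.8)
The plan is to reduce all three properties to the behaviour of the orbit of the difference vector and to convert the Banach-density conditions into syndeticity conditions via Lemma~\ref{mile-duo}. Since the metric in \eqref{metri} depends only on $x-y$ and $T$ is linear, $d(T^{j}x,T^{j}y)=d(T^{j}(x-y),0)$, so with $z:=x-y$ and $N_{<\delta}:=\{j\in{\mathbb N}:d(T^{j}z,0)<\delta\}$, $N_{\ge\delta}:=\{j\in{\mathbb N}:d(T^{j}z,0)\ge\delta\}=N_{<\delta}^{c}$, every statement is about the runs of $(d(T^{j}z,0))_{j}$. I would first prove (b)$\Leftrightarrow$(c): by Lemma~\ref{mile-duo}(i), for every $(m_{n})\in{\mathrm R}$, and in particular for $m_{n}\equiv n$, one has $\underline{Bd}_{l;m_{n}}(A)=0$ iff $A$ is not syndetic; hence $BF_{x,y,m_{n}}(\sigma)=0$ and $BG_{x,y,m_{n}}\equiv0$ just say that $N_{<\sigma}$ is not syndetic and that $N_{\ge\delta}$ is not syndetic for every $\delta>0$, conditions that do not mention $(m_{n})$. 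Thus one and the same (possibly dense) set $S$ witnesses (b) and (c) over any Fréchet space.

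Next I would prove (c)$\Rightarrow$(a) (and identically (b)$\Rightarrow$(a)), which needs no linearity beyond the above reduction. Fixing a type-$0$ pair and $z=x-y\neq0$: non-syndeticity of $N_{\ge\delta}$ means its complement $N_{<\delta}$ contains arbitrarily long blocks of consecutive integers, hence is infinite, for each $\delta>0$, so $\liminf_{j}d(T^{j}x,T^{j}y)=0$; non-syndeticity of $N_{<\sigma}$ means $N_{\ge\sigma}$ is infinite, so $\limsup_{j}d(T^{j}x,T^{j}y)\ge\sigma>0$. Hence each type-$0$ pair is a Li-Yorke pair, the same $S$ works, and denseness is inherited. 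This argument is valid in both the Banach and the Fréchet case and already yields the chain (b)$\Rightarrow$(c)$\Rightarrow$(a) of part (ii).

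The genuine content is (a)$\Rightarrow$(b)/(c) for a Banach space $X$, which I would obtain by adapting \cite[Proposition~1.1]{bk} together with the irregular-vector construction from the proof of \cite[Theorem~15]{2013JFA}. The goal is an uncountable (dense, if $T$ is densely Li-Yorke chaotic) set $S$ for which every difference $z$ has an orbit with arbitrarily long runs \emph{near} $0$ and arbitrarily long runs \emph{bounded away from} $0$; by the reduction of the first paragraph this is exactly reiterative distributional chaos of type $0$. The near-$0$ runs are cheap: from a subsequence with $\|T^{j_{k}}z\|\to0$ and the estimate $\|T^{j_{k}+i}z\|\le\|T\|^{i}\|T^{j_{k}}z\|$ one keeps the orbit below any $\delta$ on a block $0\le i\le L_{k}$ with $L_{k}\to\infty$, so $N_{\ge\delta}$ is non-syndetic for all $\delta$. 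The delicate point is the away-from-$0$ runs: I would force the difference vectors to be distributionally unbounded and then use the backward estimate $\|T^{j-i}z\|\ge\|T\|^{-i}\|T^{j}z\|$ to propagate a large value of $\|T^{j}z\|$ over a long block ending at $j$, making $N_{<\sigma}$ non-syndetic. Producing these away-from-$0$ runs simultaneously for all pairs of an uncountable dense set, which is what the $c_{0}$-type closed-span construction of \cite[Theorem~15]{2013JFA} is designed to do, is the step I expect to be the main obstacle.

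For the ``moreover'' clause of (ii) I would run the same construction in the Fréchet metric, tracking one seminorm. Since $d$ is the weighted series \eqref{metri}, the Li-Yorke condition $\limsup_{j}d(T^{j}x,0)>0$ localizes, along a subsequence, to a fixed index $m$, and the construction can be arranged so that $p_{m}(T^{j_{k}}x)\to+\infty$ for a distinguished vector $x$ along a strictly increasing $(j_{k})$. The forward estimate in $p_{m}$ again gives $BG_{y,z,m_{n}}\equiv0$, but the backward estimate is now degraded by seminorm drift, $p_{m}(T^{i}\,\cdot\,)\le C\,p_{m'}(\cdot)$ with $m'\ge m$, so largeness can only be propagated backward in the full metric $d$ over the block $[\,j_{k}-\lceil m_{s}\rceil,\,j_{k}\,]$ with a block-length-dependent threshold $F(s)$, which is precisely \eqref{zubice}. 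This seminorm drift is the reason one cannot close the loop to full (b)/(c) over a general Fréchet space, and it is the second genuine obstacle of the argument.
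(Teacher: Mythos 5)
Your reduction to the difference vector, your observation via Lemma~\ref{mile-duo}(i) that the type-$0$ conditions are $(m_{n})$-independent (so that (b)~$\Leftrightarrow$~(c) holds over any Fr\'echet space, which is even a little more than the paper records), and your derivation of (c)~$\Rightarrow$~(a) from non-syndeticity are all correct, and your sketch of the ``moreover'' clause of (ii) --- one distinguished seminorm $p_{m}$, forward seminorm estimates for $BG_{y,z,m_{n}}\equiv 0$, backward propagation only in the full metric with a block-length-dependent threshold $F(s)$ --- is essentially the paper's argument. The problem is that you leave the central implication (a)~$\Rightarrow$~(b) unproved: you explicitly flag ``producing the away-from-$0$ runs simultaneously for all pairs of an uncountable dense set'' as the main obstacle and defer it to the closed-span construction of \cite[Theorem 15]{2013JFA}. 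That construction is not needed here, and invoking it does not close the gap. The paper's resolution is a one-line observation you missed: take a single Li-Yorke irregular vector $x$ (which exists by \cite{band} whenever $T$ is Li-Yorke chaotic) and set $S:=span\{x\}$. Every difference of distinct points of $S$ is a nonzero scalar multiple $\alpha x$, and in a Banach space $\|T^{j}(\alpha x)\|=|\alpha|\,\|T^{j}x\|$, so the long runs near $0$ and away from $0$ established for $x$ transfer verbatim to every pair (the thresholds merely rescale by $|\alpha|$, which is harmless since $BG\equiv 0$ quantifies over all $\delta$ and $BF(\sigma)=0$ only needs some $\sigma$). With this, your forward estimate $\|T^{j_{k}+i}x\|\leq\|T\|^{i}\|T^{j_{k}}x\|$ and backward estimate $\|T^{j_{k}'-i}x\|\geq\|T\|^{-i}\|T^{j_{k}'}x\|$ finish the proof exactly as you intended.

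A second, related misstep: you propose to ``force the difference vectors to be distributionally unbounded.'' In the paper's terminology that means unboundedness along a set of positive lower $(m_{n})$-density, which is neither obtainable from Li-Yorke chaos in general (Li-Yorke chaotic operators need not be distributionally chaotic) nor needed. Plain unboundedness of the orbit, $\lim_{k}\|T^{j_{k}}x\|=+\infty$ along some subsequence, already suffices: given any block length $L$ and any $\sigma>0$, choose $k$ with $\|T^{j_{k}}x\|\geq\sigma\|T\|^{L}$ and the backward estimate keeps $\|T^{j}x\|\geq\sigma$ on all of $[j_{k}-L,j_{k}]$, which is exactly the non-syndeticity of $N_{<\sigma}$ you need. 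If taken literally, your plan at this step asks for more than the hypothesis can deliver.
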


\begin{proof}
The implications (b) $\Rightarrow$ (c) $\Rightarrow$ (a) are trivial and for the proof of (i) we only need to show that (a) implies (b). Towards this end, let us
recall that $T$ is Li-Yorke chaotic iff $T$ admits an irregular vector $x\in X$ for $T,$ i.e., the vector $x\in X$ such that the sequence $(T^{j}x)$ 
is unbounded and has a subsequence converging to zero.  
In the Banach space setting, this means  
that there exist 
two strictly increasing sequences $(l_{k})$ in ${\mathbb N}$ and $(j_{k})$ in ${\mathbb N}$  such that $\lim_{k\rightarrow \infty}\|T^{l_{k}}x\|=0$ and
$\lim_{k\rightarrow \infty}\|T^{j_{k}}x\|=+\infty.$ Let $s>0$ be fixed, and let $\sigma>0$ be arbitrarily chosen. Set $S:=span\{x\}.$ By definitions of $\underline{Bd}_{l,m_{n}}(\cdot)$ and $m_{n}$-reiterative distributional chaos of type $0,$
it suffices to prove that
\begin{align}\label{dziber}
\liminf_{n\rightarrow \infty}\frac{\Bigl| \bigl\{j\in {\mathbb N} : \|T^{j}x\|>\sigma \bigr\} \cap [n+1,n+m_{s}] \Bigr| }{s}=0
\end{align}
and
\begin{align}\label{dziber1}
\liminf_{n\rightarrow \infty}\frac{\Bigl| \bigl\{j\in {\mathbb N} : \|T^{j}x\|<\sigma \bigr\} \cap [n+1,n+m_{s}] \Bigr| }{s}=0.
\end{align}
It is clear that there exist two strictly increasing sequences of positive integers $(l_{k}')$ and $(j_{k}')$ with unbounded differences such that 
$\| T^{l_{k}'}x\|<\sigma(2+\|T\|)^{-k^{2}-m_{s}-1}/2$ and $\| T^{j_{k}'}x\|>2\sigma(2+\|T\|)^{k^{2}+m_{s}+1}$ for all $k\in {\mathbb N}.$ 
An elementary line of reasoning shows that the sets $\{j\in {\mathbb N} : \|T^{j}x\|>\sigma \} \cap [l_{k}',l_{k}'+\lceil m_{s} \rceil] $ and 
$\{j\in {\mathbb N} : \|T^{j}x\|<\sigma \} \cap [j_{k}'-\lceil m_{s}\rceil , j_{k}'] $ are empty, finishing the proofs of \eqref{dziber}-\eqref{dziber1} and (i). 
In the Fr\' echet space setting,
we have that there exist
an integer $m\in {\mathbb N}$ and a strictly increasing sequence $(j_{k})$ in ${\mathbb N}$ such that
$\lim_{k\rightarrow \infty}p_{m}(T^{j_{k}}x)=+\infty.$ 
Set, as above, $S:=span\{x\}$
and fix numbers $s>0,$ $\alpha \in {\mathbb K}$ and $\beta \in {\mathbb K}$ ($\alpha \neq \beta$). Due to the continuity of $T,$ we can find two strictly increasing sequences $(c_{j})$ and $(r_{j})$ of positive integers such that $p_{r_{j}}(Ty)\leq c_{j}p_{r_{j+1}}(y)$ for all $y\in X$ and $j\in {\mathbb N}.$ Define
$$
F(s):=\sum_{a=\lceil m_{s} \rceil +m}^{\infty}2^{-1-r_{a+1}}.
$$
It remains to be proved that $BG_{\alpha x,\beta x,m_{n}}\equiv 0$ and \eqref{zubice} holds with $y=\alpha x$ and $z=\beta x.$ For the first equality, choose $\epsilon>0$ arbitrarily. Suppose that $\lim_{k\rightarrow \infty}T^{l_{k}}x=0$ and $j\in [l_{k},l_{k}+\lceil m_{s} \rceil]$ for some $k\in{\mathbb N}.$
It is clear that there exists an integer $a\in {\mathbb N}$ such that $\sum_{l> r_{a}}2^{-l}\leq \epsilon/2.$ Then we have 
\begin{align*}
d\bigl(T^{j}\alpha x ,T^{j}\beta x\bigr)& \leq \sum_{l=1}^{r_{a}}\frac{1}{2^{l}}\frac{p_{l}\bigl(T^{j}(\alpha-\beta)x\bigr)}{1+p_{l}\bigl(T^{j}(\alpha-\beta)x\bigr)}+\sum_{l>r_{a}}\frac{1}{2^{l}}\frac{p_{l}\bigl(T^{j}(\alpha-\beta)x\bigr)}{1+p_{l}\bigl(T^{j}(\alpha-\beta)x\bigr)}
\\ & \leq \sum_{l=1}^{r_{a}}\frac{1}{2^{l}}\frac{|\alpha-\beta| p_{r_{a}}(T^{l}x)}{1+|\alpha-\beta| p_{r_{a}}(T^{l}x)}+\sum_{l>r_{a}}\frac{1}{2^{l}}
\\ & \leq \frac{|\alpha-\beta| p_{r_{a}}(T^{l}x)}{1+|\alpha-\beta| p_{r_{a}}(T^{l}x)}+\frac{\epsilon}{2}
\\ & \leq \frac{|\alpha-\beta| c_{1}c_{2}\cdot \cdot \cdot c_{\lceil m_{s} \rceil}p_{r_{a+\lceil m_{s} \rceil}}(T^{l_{k}}x)}{ 1+|\alpha -\beta|c_{1}c_{2}\cdot \cdot \cdot c_{\lceil m_{s} \rceil}p_{r_{a+\lceil m_{s} \rceil}}(T^{l_{k}}x)}+\frac{\epsilon}{2},
\end{align*}
so that there exists a sufficiently large number $k_{0}'=k_{0}'(s)\in {\mathbb N}$ such that for each integer $k\in {\mathbb N}$ with $k\geq k_{0}$ and for each integer $j\in [l_{k},l_{k}+\lceil m_{s} \rceil]$ we have $d(T^{j}\alpha x ,T^{j}\beta x)\leq \epsilon.$ Hence, $BG_{\alpha x, \beta x,m_{n}}\equiv 0.$
Suppose now that $k\in {\mathbb N},$ $j\in {\mathbb N} \cap [j_{k}-\lceil m_{s}\rceil,j_{k}]$ and $a\geq \lceil m_{s} \rceil +m.$ Then it is easy to see that $p_{r_{a}}(T^{j}x)\geq [c_{a-1}c_{a-2}\cdot \cdot \cdot c_{a-(j_{k}-j)}]^{-1}p_{r_{a-(j_{k}-j)}}(T^{j_{k}}x),$ so that
\begin{align*}
d\bigl( \alpha T^{j}x,\beta T^{j}x \bigr)& \geq \sum_{a=\lceil m_{s} \rceil +m}^{\infty}2^{-r_{a+1}}\frac{|\alpha-\beta|p_{r_{a}}(T^{j}x)}{1+|\alpha-\beta|p_{r_{a}}(T^{j}x)}
\\ & \geq \sum_{a=\lceil m_{s} \rceil +m}^{\infty}2^{-r_{a+1}}\frac{|\alpha-\beta|p_{r_{a-(j_{k}-j)}}(T^{j_{k}}x)}{c_{a-1}c_{a-2}\cdot \cdot \cdot c_{a-(j_{k}-j)}+|\alpha-\beta|p_{r_{a-(j_{k}-j)}}(T^{j_{k}}x)}
\\ & \geq \sum_{a=\lceil m_{s} \rceil +m}^{\infty}2^{-r_{a+1}}\frac{|\alpha-\beta|p_{r_{a-\lceil m_{s} \rceil}}(T^{j_{k}}x)}{c_{a-1}c_{a-2}\cdot \cdot \cdot c_{a-(j_{k}-j)}+|\alpha-\beta|p_{r_{a-\lceil m_{s} \rceil}}(T^{j_{k}}x)}
\\ & \geq \sum_{a=\lceil m_{s} \rceil +m}^{\infty}2^{-r_{a+1}}\frac{|\alpha-\beta|p_{r_{a-\lceil m_{s} \rceil}}(T^{j_{k}}x)}{c_{a-1}c_{a-2}\cdot \cdot \cdot c_{a-\lceil m_{s}\rceil}+|\alpha-\beta|p_{r_{a-\lceil m_{s} \rceil}}(T^{j_{k}}x)}
\\ & \geq \sum_{a=\lceil m_{s} \rceil +m}^{\infty}2^{-r_{a+1}}\frac{|\alpha-\beta|p_{m}(T^{j_{k}}x)}{c_{a-1}c_{a-2}\cdot \cdot \cdot c_{a-\lceil m_{s}\rceil}+|\alpha-\beta|p_{m}(T^{j_{k}}x)}.
\end{align*}
Since $\lim_{k\rightarrow \infty}p_{m}(T^{j_{k}}x)=+\infty,$ the above implies the existence of a sufficiently large number $k_{0}=k_{0}(s)\in {\mathbb N}$ such that for each $k\in {\mathbb N}$ with $k\geq k_{0}$,  $j\in {\mathbb N} \cap [j_{k}-\lceil m_{s}\rceil,j_{k}]$ and $a\geq \lceil m_{s} \rceil +m,
$ we have $d( \alpha T^{j}x,\beta T^{j}x ) \geq F(s).$ This gives \eqref{zubice} and completes the proof of proposition.
\end{proof}

\begin{rem}\label{pripazise}
By the proof of above proposition, we have the following equivalence relations in Banach spaces:
\begin{itemize}
\item[(i)] An element $x\in X$ is Li-Yorke unbounded for $T$ (i.e., $\inf_{j\in {\mathbb N}}\|T^{j}x\|=0$) iff for each sequence $(m_{n}) \in {\mathrm R}$ there exists an infinite set $B\subseteq {\mathbb N}$ such that $\lim_{j\in B}\|T^{j}x\|=+\infty$ and $\underline{Bd}_{l;m_{n}}(B^{c})=0.$
\item[(ii)] An element $x\in X$ is Li-Yorke near to zero for $T$ (i.e., $\sup_{j\in {\mathbb N}}\|T^{j}x\|=+\infty$) iff for each sequence $(m_{n}) \in {\mathrm R}$ there exists an infinite set $B\subseteq {\mathbb N}$ such that $\lim_{j\in B}\|T^{j}x\|=0$ and $\underline{Bd}_{l;m_{n}}(B^{c})=0.$
\end{itemize} 
\end{rem}

For operators on Banach spaces, it is well known that the chaos DC2 is equivalent with chaos DC1 (see \cite[Theorem 2]{2018JMMA}).
This basically follows from an application of \cite[Proposition 8]{2013JFA} and the first problem that we want to announce is a question whether the condition (i)' in the formulation of this proposition implies the condition (i) for operators in Fr\' echet spaces: 

\begin{prob}\label{prob1}
 Let $X$ be a Fr\' echet space and let $T\in L(X).$ Consider the following conditions:
\begin{itemize}
\item[(i)':] There exist $\epsilon>0,$ a sequence $(y_{k}) \in X$ and an increasing sequence $(N_{k})$ in ${\mathbb N}$ such that $\lim_{k\rightarrow \infty}y_{k}=0$ and
$$
card\Bigl(\bigl\{ 1\leq j\leq N_{k} : d(T^{j}y_{k},0) >\epsilon \bigr\}\Bigr) \geq \epsilon N_{k},\quad k\in {\mathbb N}.
$$
\item[(i):] There exist $\epsilon>0,$ a sequence $(y_{k}) \in X$ and an increasing sequence $(N_{k})$ in ${\mathbb N}$ such that $\lim_{k\rightarrow \infty}y_{k}=0$ and
$$
\lim_{k\rightarrow \infty}\frac{1}{N_{k}}card\Bigl(\bigl\{ 1\leq j\leq N_{k} : d(T^{j}y_{k},0) >\epsilon \bigr\}\Bigr)=1.
$$
\end{itemize}
Is it true that
\emph{(i)'} implies \emph{(i)}?
\end{prob}

Now we would like to raise the following issues closely linked with Problem \ref{prob1}: 

\begin{prob}\label{prob2}
Let $X$ be a Fr\' echet space and let $T\in L(X).$ Is it true that:
\begin{itemize}
\item[(i)] $T$ satisfies \emph{(DC2)} iff $T$ satisfies \emph{(DC1)}?
\item[(ii)] $T$ is reiteratively distributionally chaotic of type $2-$ iff $T$ is reiteratively distributionally chaotic of type $2$?
\end{itemize}
\end{prob}

To the best knowledge of the author, it is still unknown whether the answers to Problem \ref{prob1} and Problem \ref{prob2} are affirmative for certain classes of shift operators in 
Fr\'echet sequence spaces and certain classes of composition operators in Fr\'echet function spaces, and whether a counterexample of such an operator not fulfilling the equivalence relation (a) $\Leftrightarrow$ (b) $\Leftrightarrow$ (c) of Proposition \ref{ekvivalentno} really exists.

\section{Reiterative $m_{n}$-distributionally irregular vectors of type $s$ and reiterative $m_{n}$-distributionally irregular manifolds of type $s$}\label{totijemarko}

In this section, we investigate various notions of
(reiterative) $m_{n}$-distributionally irregular vectors of type $s$ and (reiterative) $m_{n}$-distributionally irregular manifolds of type $s.$
We start by introducing the following definition:

\begin{defn}\label{nizovi-fgh}
Suppose that for each $j\in {\mathbb N},$ $T_{j} : D(T_{j}) \subseteq X \rightarrow Y$ is a linear operator. Then we say that:
\begin{itemize}
\item[(i)] $x$ is ({\it reiteratively}) $m_{n}${\it -distributionally
near to $0$} for $(T_{j})_{j\in {\mathbb N}}$
iff there is $A\subseteq {\mathbb N}$ such that ($\underline{Bd}_{l;m_{n}}(A^{c})=0$) $\underline{d}_{m_{n}}(A^{c})=0$ and 
$\lim_{j\in A,j\rightarrow \infty}T_{j}x=0;$
\item[(ii)] $x$ is ({\it reiteratively}) $m_{n}${\it-distributionally $m$-unbounded} for $(T_{j})_{j\in {\mathbb N}}$ iff there is $B\subseteq
{\mathbb N}$ such that  ($\underline{Bd}_{l;m_{n}}(B^{c})=0$) $\underline{d}_{m_{n}}(B^{c})=0$  and
$\lim_{j\in
B,j\rightarrow \infty}p_{m}^{Y}(T_{j}x)=\infty ;$ $x$ is said to be ({\it reiteratively}) $m_{n}${\it-distributionally unbounded} for $(T_{j})_{j\in {\mathbb N}}$ iff there
exists $q\in {\mathbb N}$ such that $x$ is (reiteratively) $m_{n}$-distributionally $q$-unbounded for $(T_{j})_{j\in {\mathbb N}}$
(if $Y$ is a Banach space, this simply means that  $\lim_{j\in
B,j\rightarrow \infty}\|T_{j}x\|_{Y}=\infty ).$
\end{itemize}
\end{defn}

In the following two definitions, we introduce separately the notions of $m_{n}$-distributionally irregular vectors of type $s\in \{1,2,2\frac{1}{2},3,2_{Bd}
\}$ and reiteratively $m_{n}$-distributionally irregular vectors of type $s\in \{0,1,1+,2,2_{Bd},2-
\}:$

\begin{defn}\label{-ira}
Suppose that for each $j\in {\mathbb N},$ $T_{j} : D(T_{j}) \subseteq X \rightarrow Y$ is a linear operator. Then we say that:
\begin{itemize}
\item[(i)] $x$ is an {\it $m_{n}$-distributionally irregular vector} for
$(T_{j})_{j\in {\mathbb N}}$ iff $x$ is $m_{n}$-distributionally
near to $0$ for $(T_{j})_{j\in {\mathbb N}}$ and $x$ is $m_{n}$-distributionally
unbounded for $(T_{j})_{j\in {\mathbb N}}$; 
\item[(ii)] $x$ is an $m_{n}${\it -distributionally irregular vector of type $1$} for
$(T_{j})_{j\in {\mathbb N}}$ iff $x$ is $m_{n}$-distributionally
near to $0$ for $(T_{j})_{j\in {\mathbb N}}$ and $F_{x,0,m_{n}}(\sigma) =0$
for some $\sigma > 0;$
\item[(iii)] $x$ is an $m_{n}${\it -distributionally irregular vector of type $2$} for
$(T_{j})_{j\in {\mathbb N}}$ iff there exists a finite number $\sigma>0$ such that 
$G_{x,0,m_{n}} \equiv 0$ and $I_{x,0,m_{n}}(\sigma) >0;$
\item[(iv)] $x$ is an {\it $m_{n}$-distributionally irregular vector of type $2\frac{1}{2}$} for
$(T_{j})_{j\in {\mathbb N}}$ iff there exist real numbers $c > 0$ and $r > 0$ such that
 $F_{x,0,m_{n}}(\delta) < c < H_{x,0,m_{n}}(\delta)$
 for all $ 0 < \delta < r;$
\item[(v)] $x$ is an $m_{n}${\it-distributionally irregular vector of type $3$} for
$(T_{j})_{j\in {\mathbb N}}$ iff there exist real numbers $b>a>  0$ and $c>0$ such that
 $F_{x,0,m_{n}}(\delta) < c < H_{x,0,m_{n}}(\delta)$
 for all $ a \leq \delta \leq b;$
\item[(vi)] $x$ is an $m_{n}${\it-distributionally irregular vector of type $2_{Bd}$} for
$(T_{j})_{j\in {\mathbb N}}$ iff $x$ is $m_{n}$-distributionally near to zero and 
there exists a finite number $\sigma>0$ such that
$BI_{x,0,m_{n}}(\sigma) >0.$
\end{itemize}
\end{defn}

\begin{defn}\label{reiterative-ira}
Suppose that for each $j\in {\mathbb N},$ $T_{j} : D(T_{j}) \subseteq X \rightarrow Y$ is a linear operator. Then we say that:
\begin{itemize}
\item[(i)] $x$ is a {\it reiteratively $m_{n}$-distributionally irregular vector of type $0$} for
$(T_{j})_{j\in {\mathbb N}}$ iff $x$ is reiteratively $m_{n}$-distributionally
near to $0$ for $(T_{j})_{j\in {\mathbb N}}$ and $x$ is reiteratively $m_{n}$-distributionally
unbounded for $(T_{j})_{j\in {\mathbb N}}$; 
\item[(ii)] $x$ is a {\it reiteratively} $m_{n}${\it -distributionally irregular vector of type $1$} for
$(T_{j})_{j\in {\mathbb N}}$ iff there exist two finite numbers $c \in (0,\liminf_{n\rightarrow \infty}\frac{m_{n}}{n})$ and $r>0$ such that  $G_{x,0,m_{n}}(\delta)\leq c$
for $0<\delta<r,$
and $x$ is reiteratively $m_{n}$-distributionally unbounded for $(T_{j})_{j\in {\mathbb N}};$
\item[(iii)] $x$ is a {\it reiteratively $m_{n}$-distributionally irregular vector of type $1+$} ($1^{+}$) for
$(T_{j})_{j\in {\mathbb N}}$ iff $x$ is $m_{n}$-distributionally near to zero and $x$ is reiteratively $m_{n}$-distributionally unbounded for $(T_{j})_{j\in {\mathbb N}};$
\item[(iv)] $x$ is a {\it reiteratively} $m_{n}${\it-distributionally irregular vector of type $2$} for
$(T_{j})_{j\in {\mathbb N}}$ iff $x$ is reiteratively $m_{n}$-distributionally near to zero and 
there exists $\sigma>0$ such that $I_{x,0,m_{n}}(\sigma) >0;$
\item[(v)] $x$ is a {\it reiteratively} $m_{n}${\it-distributionally irregular vector of type $2_{Bd}$} for
$(T_{j})_{j\in {\mathbb N}}$ iff $x$ is reiteratively $m_{n}$-distributionally near to zero and there exists a finite number $\sigma>0$ such that
$BI_{x,0,m_{n}}(\sigma) >0;$
\item[(vi)] $x$ is a {\it reiteratively $m_{n}$-distributionally irregular vector of type $2-$} for
$(T_{j})_{j\in {\mathbb N}}$ iff $x$ is reiteratively $m_{n}$-distributionally near to zero and $x$ is $m_{n}$-distributionally unbounded for $(T_{j})_{j\in {\mathbb N}}.$
\end{itemize}
\end{defn}

We will employ the following notion, as well:

\begin{defn}\label{idiotisen-obicno}
Suppose that for each $j\in {\mathbb N},$ $T_{j} : D(T_{j}) \subseteq X \rightarrow Y$ is a linear operator.
Let $\{0\} \neq X'  \subseteq \tilde{X}$ be a linear manifold and let $s\in \{1,2,2\frac{1}{2},3,2_{Bd}\}$. Then
we say that:
\begin{itemize}
\item[(i)]
$X'$ is ({\it dense,} if $X'$ is dense in $\tilde{X}$) $\tilde{X}_{m_{n}}${\it -distributionally
irregular manifold} ({\it of type $s$})  for $(T_{j})_{j\in {\mathbb N}}$
iff any element $x\in (X' \cap
\bigcap_{j=1}^{\infty}D(T_{j})) \setminus \{0\}$ is an
$m_{n}$-distributionally irregular vector (of type $s$) for
$(T_{j})_{j\in {\mathbb N}};$
\item[(ii)] 
$X'$ is a  ({\it dense,} if $X'$ is dense in $\tilde{X}$) {\it  uniformly $\tilde{X}_{m_{n}}$-distributionally
irregular manifold} for $(T_{j})_{j\in {\mathbb N}}$
if, in addition to the above, there exists $m\in {\mathbb N}$ such that
any vector $x\in (X' \cap
\bigcap_{j=1}^{\infty}D(T_{j})) \setminus \{0\}$ is $m_{n}$-distributionally $m$-unbounded;
\item[(iii)] $X'$ is ({\it dense,} if $X'$ is dense in $\tilde{X}$) $\tilde{X}_{m_{n}}${\it -distributionally
irregular manifold} {\it of type $2_{Bd}+$}  for $(T_{j})_{j\in {\mathbb N}}$
iff $X'$ is (dense, if $X'$ is dense in $\tilde{X}$) $\tilde{X}_{m_{n}}$-distributionally
irregular manifold of type $2_{Bd}$ for $(T_{j})_{j\in {\mathbb N}}$ and the number $\sigma>0$ in Definition \ref{-ira}(iii) is independent of choice of element $x\in (X' \cap
\bigcap_{j=1}^{\infty}D(T_{j})) \setminus \{0\}.$
\end{itemize}
\end{defn}

\begin{defn}\label{idiotisen}
Suppose that for each $j\in {\mathbb N},$ $T_{j} : D(T_{j}) \subseteq X \rightarrow Y$ is a linear operator.
Let $\{0\} \neq X'  \subseteq \tilde{X}$ be a linear manifold and let $s\in \{0,1,1+,1^{+},2,2_{Bd}, 2-\}$. Then
we say that:
\begin{itemize}
\item[(i)]
$X'$ is ({\it dense,} if $X'$ is dense in $\tilde{X}$) {\it reiteratively $\tilde{X}_{m_{n}}$-distributionally
irregular manifold of type $s$}  for $(T_{j})_{j\in {\mathbb N}}$
iff any element $x\in (X' \cap
\bigcap_{j=1}^{\infty}D(T_{j})) \setminus \{0\}$ is a
reiteratively $m_{n}$-distributionally irregular vector of type $s$ for
$(T_{j})_{j\in {\mathbb N}};$
\item[(ii)] If $s\in \{ 0,1,1^{+}\},$ then we say that
$X'$ is a ({\it dense,} if $X'$ is dense in $\tilde{X}$) {\it  uniformly $\tilde{X}_{m_{n}}$-reiteratively distributionally
irregular manifold of type $s$} for $(T_{j})_{j\in {\mathbb N}}$
if, in addition to the above, there exists $m\in {\mathbb N}$ such that
any vector $x\in (X' \cap
\bigcap_{j=1}^{\infty}D(T_{j})) \setminus \{0\}$ is reiteratively $m_{n}$-distributionally $m$-unbounded;
\item[(iii)] If $s=2-,$ then we say that
$X'$ is a ({\it dense,} if $X'$ is dense in $\tilde{X}$) {\it  uniformly $\tilde{X}_{m_{n}}$-reiteratively distributionally
irregular manifold of type $s$} for $(T_{j})_{j\in {\mathbb N}}$
if, in addition to the above, there exists $m\in {\mathbb N}$ such that
any vector $x\in (X' \cap
\bigcap_{j=1}^{\infty}D(T_{j})) \setminus \{0\}$ is $m_{n}$-distributionally $m$-unbounded;
\item[(iv)] If $s\in \{2+,2_{Bd}+\},$ then we say that $X'$ is ({\it dense,} if $X'$ is dense in $\tilde{X}$) {\it reiteratively $\tilde{X}_{m_{n}}$-distributionally
irregular manifold of type $s$}  for $(T_{j})_{j\in {\mathbb N}}$
iff any element $x\in (X' \cap
\bigcap_{j=1}^{\infty}D(T_{j})) \setminus \{0\}$ is a
reiteratively $m_{n}$-distributionally irregular vector of type $s$ for
$(T_{j})_{j\in {\mathbb N}}$ and the number $\sigma>0$ in Definition \ref{reiterative-ira}(iv)-(v) is independent of $x.$
\end{itemize}
\end{defn}

All above notions are accepted for a linear operator $T : D(T) \subseteq X \rightarrow X$
by using the sequence $(T_{j}\equiv
T^{j})_{\in {\mathbb N}}$ for definitions.
Further on, we will accept similar agreements as before. If $\tilde{X}=X$ or $m_{n}\equiv n,$ then we remove the prefixes ``$\tilde{X}$-'' and ``$m_{n}$-'' from the terms and notions. For example, dense uniformly reiteratively distributionally
irregular manifold of type $s$ for $(T_{j})_{j\in {\mathbb N}}$ is dense uniformly $X_{n}$-reiteratively distributionally
irregular manifold of type $s$ for $(T_{j})_{j\in {\mathbb N}}.$
If $m_{n}\equiv n^{1/\lambda}$ for some $\lambda \in (0,1],$ then,
as a special case of the above definitions, we obtain 
the notions of (reiteratively) $\lambda$-distributionally near to zero vectors, (reiteratively) $\lambda$-distributionally ($m$-)unbounded vectors  
and (reiteratively) $\lambda$-distributionally irregular vectors of type $s$ for sequence $(T_{j})_{j\in {\mathbb N}}$ (operator $T$); a similar terminology is used for manifolds.
In the case that $\lambda=1,$ 
then we remove the prefix ``$\lambda$-'' from the terms and notions.

The following statements hold:

\begin{itemize}
\item[A.] 
Using the elementary properties of metric, it can be simply verified that 
$X'$ is a $\tilde{X}_{m_{n}}$-scrambled set for
$(T_{j})_{j\in {\mathbb N}}$ whenever $X'$ is a uniformly reiteratively $\tilde{X}_{m_{n}}$-distributionally
irregular manifold\index{$\tilde{X}$-distributionally
irregular manifold!uniformly} for $(T_{j})_{j\in {\mathbb N}}.$ Furthermore, let $s\in \{1,2\frac{1}{2},3,2_{Bd},2_{Bd}+\};$ then $X'$ is a $\tilde{X}_{m_{n}}$-scrambled set of type $s$ for
$(T_{j})_{j\in {\mathbb N}}$ whenever $X'$ is an $\tilde{X}_{m_{n}}$-distributionally
irregular manifold\index{$\tilde{X}$-distributionally
irregular manifold!uniformly} of type $s$ for $(T_{j})_{j\in {\mathbb N}}.$
On the other hand, we can simply
verify that, if $0\neq x\in \tilde{X}$ is an $m_{n}$-distributionally irregular vector/$m_{n}$-distributionally irregular vector of type $s$ for $(T_{j})_{j\in {\mathbb N}},$
then $X'\equiv span\{x\}$
is a uniformly $\tilde{X}_{m_{n}}$-distributionally irregular manifold for
$(T_{j})_{j\in {\mathbb N}}$ /$\tilde{X}_{m_{n}}$-distributionally irregular manifold of type $s$ for
$(T_{j})_{j\in {\mathbb N}}$ [if $s=3,$ then we need to additionally impose that $Y$ is a Banach space].
\item[B.]
Let $s\in \{0, 1,1^{+},2-\}$.
Using the elementary properties of metric, it can be simply verified that 
$X'$ is a reiteratively $m_{n}$-scrambled set of type $s$ for
$(T_{j})_{j\in {\mathbb N}}$ whenever $X'$ is a uniformly reiteratively $m_{n}$-distributionally
irregular manifold\index{$\tilde{X}$-distributionally
irregular manifold!uniformly} of type $s$ for $(T_{j})_{j\in {\mathbb N}}.$ Furthermore, $X'$ is a reiteratively $m_{n}$-scrambled set of type $2$ ($2_{Bd}$) for
$(T_{j})_{j\in {\mathbb N}}$ whenever $X'$ is a reiteratively $m_{n}$-distributionally
irregular manifold\index{$\tilde{X}$-distributionally
irregular manifold!uniformly} of type $2/2+$ ($2_{Bd}/2_{Bd}+$) for $(T_{j})_{j\in {\mathbb N}}.$
On the other hand, we can simply
verify that, if $0\neq x\in \tilde{X}$ is a reiteratively
$m_{n}$-distributionally irregular vector of type $s$ for $(T_{j})_{j\in {\mathbb N}},$
then $X'\equiv span\{x\}$
is a (uniformly, if $s\notin \{ 2,2_{Bd}\}$) reiteratively  $\tilde{X}_{m_{n}}$-distributionally irregular manifold of type $s$ for
$(T_{j})_{j\in {\mathbb N}}.$ 
\end{itemize}

\subsection{Structural results for $m_{n}$-distributionally irregular vectors}\label{profice}

A fairly complete analysis of (reiteratively) $m_{n}$-distributionally
irregular vectors of type $s$ and corresponding (reiteratively) $m_{n}$-distributionally
irregular manifolds of type $s$ is far from beng easy and trivial. In this subsection, we shall primarily focus our attention on the notions of 
$m_{n}$-distributional chaos, reiterative $m_{n}$-distributional chaos of types $1$, $1+,$ and explain how the results from \cite[Section 2]{2013JFA} can be slightly extended for $m_{n}$-distributional chaos (cf. also \cite{novascience}). 

We start by stating the following extension of \cite[Proposition 7]{2013JFA} and the equivalence relations (i) $\Leftrightarrow$ (ii) $\Leftrightarrow$ (iii) of \cite[Proposition 8]{2013JFA} (concerning this proposition, we feel duty bound to say that the equivalence with (i)' and (ii)' for orbits of a single operator on Banach space is not attainable for $m_{n}$-distributional chaos, as far as we can see):

\begin{prop}\label{beton}
Let $(m_{n})\in {\mathrm R},$ $m\in {\mathbb N}$ and $(T_{j})_{j\in {\mathbb N}}$ be a sequence in $L(X,Y).$
\begin{itemize}
\item[(i)] The following assertions are equivalent:
\begin{itemize}
\item[(a)]
Suppose that there exist a number $\epsilon>0$, a zero sequence $(y_{k})$ in $X$ and a strictly increasing sequence $(N_{k})$ in ${\mathbb N}$ such that
\begin{align}\label{prcko-frcko}
m_{N_{k}}-\Bigl| \bigl\{j\in {\mathbb N} : p_{m}^{Y}(T_{j}y_{k})>\epsilon\bigr\} \cap [1,m_{N_{k}}] \Bigr |\leq \frac{N_{k}}{k},\quad k\in {\mathbb N}.
\end{align}
\item[(b)] The set of $m_{n}$-distributionally $m$-unbounded vectors for $(T_{j})_{j\in {\mathbb N}}$ is non-empty.
\item[(c)] The set of $m_{n}$-distributionally $m$-unbounded vectors for $(T_{j})_{j\in {\mathbb N}}$ is residual in $X.$
\end{itemize}
\item[(ii)] The following assertions are equivalent:
\begin{itemize}
\item[(a)']
Suppose that there exist a number $\epsilon>0$, a zero sequence $(y_{k})$ in $X$ and a strictly increasing sequence $(N_{k})$ in ${\mathbb N}$ such that
\begin{align}\label{prckof-frckof}
m_{N_{k}}-\Bigl| \bigl\{j\in {\mathbb N} : d_{Y}(T_{j}y_{k},0)>\epsilon \bigr\} \cap [1,m_{N_{k}}] \Bigr |\leq \frac{N_{k}}{k},\quad k\in {\mathbb N}.
\end{align}
\item[(b)'] The set of $m_{n}$-distributionally unbounded vectors for $(T_{j})_{j\in {\mathbb N}}$ is non-empty.
\item[(c)'] The set of $m_{n}$-distributionally unbounded vectors for $(T_{j})_{j\in {\mathbb N}}$ is residual in $X.$
\end{itemize}
\end{itemize}
\end{prop}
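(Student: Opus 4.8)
The space $X$ is a Fr\'echet space, hence a Baire space, and the whole proof is a Baire-category argument; the plan is to establish part (i) through the cycle \emph{(c)}$\Rightarrow$\emph{(b)}$\Rightarrow$\emph{(a)}$\Rightarrow$\emph{(c)}, and to obtain part (ii) by the same scheme with $d_Y(T_j\cdot,0)$ in place of $p_m^Y(T_j\cdot)$ and ``unbounded'' in place of ``$m$-unbounded''. The implication \emph{(c)}$\Rightarrow$\emph{(b)} is immediate: a residual subset of the Baire space $X$ is dense, in particular non-empty.

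For \emph{(b)}$\Rightarrow$\emph{(a)} I would start from a single $m_n$-distributionally $m$-unbounded vector $x$; by Definition \ref{nizovi-fgh} there is $B\subseteq{\mathbb N}$ with $\underline{d}_{m_n}(B^c)=0$ and $p_m^Y(T_jx)\to\infty$ as $j\to\infty$ in $B$. Setting $y_k:=x/k$ gives a zero sequence, and with $\epsilon:=1$ the set $\{j\in B:p_m^Y(T_jy_k)>1\}=\{j\in B:p_m^Y(T_jx)>k\}$ is cofinite in $B$; hence the complement of $\{j:p_m^Y(T_jy_k)>1\}$ meets $B$ in a finite set only. Choosing $N_k$ along the subsequence realising $\underline{d}_{m_n}(B^c)=0$, increasing and large enough to absorb that finite set and the fractional part of $m_{N_k}$, yields \eqref{prcko-frcko}. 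The hypothesis $(m_n)\in{\mathrm R}$, i.e. $n\le Lm_n$, is precisely what lets me pass between the normalisation by $n$ in $\underline{d}_{m_n}$ and counts on $[1,m_{N_k}]$.

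The substance is \emph{(a)}$\Rightarrow$\emph{(c)}. First I would reformulate the notion: a routine diagonalisation shows that $x$ is $m_n$-distributionally $m$-unbounded if and only if $\underline{d}_{m_n}(\{j:p_m^Y(T_jx)\le C\})=0$ for every $C\in{\mathbb N}$. As $x\mapsto p_m^Y(T_jx)$ is continuous, each functional $x\mapsto|\{1\le j\le\lceil m_N\rceil:p_m^Y(T_jx)\le C\}|$ is upper semicontinuous, so the sets
\[
V_{C,q}:=\Bigl\{x\in X:\exists\,N\ge q,\ \bigl|\{1\le j\le\lceil m_N\rceil:p_m^Y(T_jx)\le C\}\bigr|<N/q\Bigr\}
\]
are open. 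The set of $m_n$-distributionally $m$-unbounded vectors then equals $\bigcap_{C,q\in{\mathbb N}}V_{C,q}$, a $G_\delta$ set; so it is residual, and \emph{(c)} follows, once every $V_{C,q}$ is shown to be dense.

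Thus everything reduces to denseness of $V_{C,q}$, which I expect to be the main obstacle. Given $x_0$ and a neighbourhood of it, I would perturb by a scaled term $c\,y_k$ of the zero sequence from \emph{(a)}: on the good set $G_k:=\{j:p_m^Y(T_jy_k)>\epsilon\}$, which by \eqref{prcko-frcko} omits at most $N_k/k$ of the integers in $[1,\lceil m_{N_k}\rceil]$, one has $p_m^Y(T_j(cy_k))>c\epsilon$. The obstruction is cancellation against $x_0$: an index $j\in G_k$ can fail to satisfy $p_m^Y(T_j(x_0+cy_k))>C$ only when $p_m^Y(T_jx_0)$ falls in a band of width $\le 2C$ about $c\,p_m^Y(T_jy_k)$. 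Following \cite[Proposition 7]{2013JFA}, I would average over the scale $c\in[0,L]$: each fixed $j\in G_k$ is ``bad'' only for $c$ in an interval of length $\le 2C/\epsilon$, so for some admissible $c$ the number of bad indices is controlled, and together with the at most $N_k/k$ indices outside $G_k$ this keeps the exceptional count below $N/q$, placing $x_0+cy_k$ in $V_{C,q}$; meanwhile $d(cy_k,0)\le(L+1)d(y_k,0)$ keeps it near $x_0$ for $k$ large, the balancing of $L$ against the smallness of $y_k$ (and the control of $m_{N_k}/N_k$ via $(m_n)\in{\mathrm R}$) being the delicate point. Part (ii) then runs verbatim: largeness is measured by $d_Y(T_j\cdot,0)>\epsilon$ with $\epsilon<1$ (as $d_Y\le1$), and in \emph{(b)$'$}$\Rightarrow$\emph{(a)$'$} one uses that $p_q^Y(T_jx)\to\infty$ forces $d_Y(T_j(x/k),0)\to2^{-q}>2^{-q-1}=:\epsilon$ along $B$, so the scaling $y_k=x/k$ again does the job.
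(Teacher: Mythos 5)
Your overall architecture coincides with the paper's: the cycle \emph{(c)}$\Rightarrow$\emph{(b)}$\Rightarrow$\emph{(a)}$\Rightarrow$\emph{(c)}, the reformulation of $m_{n}$-distributional $m$-unboundedness as $\underline{d}_{m_{n}}(\{j : p_{m}^{Y}(T_{j}x)\le C\})=0$ for every $C$, the open sets $V_{C,q}$ (the paper's $M_{k}$ is essentially your $V_{k,k}$), and the reduction of everything to the density of these sets. Your \emph{(c)}$\Rightarrow$\emph{(b)}, \emph{(b)}$\Rightarrow$\emph{(a)}, the $G_{\delta}$ description, and the upper-semicontinuity argument for openness are all fine, as is the adaptation to part (ii).

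The gap is in the density step, exactly at the point you call delicate, and your proposed resolution reads the defining inequality of ${\mathrm R}$ backwards: $(m_{n})\in{\mathrm R}$ means $n\le Lm_{n}$, i.e. a \emph{lower} bound $m_{n}/n\ge 1/L$; it gives no upper bound on $m_{n}/n$, and in the cases the proposition is actually meant to cover ($m_{n}=n^{1/\lambda}$ with $\lambda<1$, or $m_{n}=2^{an^{b}}$) one has $m_{n}/n\to\infty$. Now run your average: $G_{k}\cap[1,m_{N_{k}}]$ has up to $\lceil m_{N_{k}}\rceil$ elements, each bad for $c$ in an interval of length $\le 2C/\epsilon$, so averaging over $c\in[0,L]$ only produces a $c$ whose bad count inside $G_{k}$ is $\le 2C\lceil m_{N_{k}}\rceil/(\epsilon L)$; since the target is $N_{k}/q$ (normalised by $N_{k}$, not by $m_{N_{k}}$ --- this is precisely what makes the notion strong), you are forced to take $L\ge \frac{4qC}{\epsilon}\cdot\frac{m_{N_{k}}}{N_{k}}$, which is unbounded in $k$. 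Nothing in the hypotheses couples the rate at which $d(y_{k},0)\to 0$ to the growth of $m_{N_{k}}/N_{k}$, so $(L+1)\,d(y_{k},0)$ need not become small and the perturbed vector need not stay in the prescribed neighbourhood of $x_{0}$. (The same tension is hidden in the paper's own outline: transcribing \cite[Proposition 7]{2013JFA} with the grid $u_{s}=x+\frac{\delta s u}{2k(1+m_{n})C}$, the pigeonhole forces $C\le \delta\epsilon n/(4k^{2}(1+m_{n}))$, i.e. a maximal coefficient $\ge \frac{4k^{2}}{\epsilon}\cdot\frac{1+m_{n}}{n}$, and the outline does not address this either.) Your argument is complete whenever $\sup_{n}m_{n}/n<\infty$ (in particular for $\lambda=1$, which is \cite[Proposition 7]{2013JFA} itself), but for the genuinely new cases an additional idea is required --- e.g. replacing $y_{k}$ by $R_{k}y_{k}$ with $R_{k}\to\infty$ so as to shrink the bad intervals to length $2C/(R_{k}\epsilon)$, which only works if $R_{k}\ge \mathrm{const}\cdot m_{N_{k}}/N_{k}$ and $R_{k}\,d(y_{k},0)\to 0$ can be arranged simultaneously, and condition \eqref{prcko-frcko} alone does not guarantee that.
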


\begin{proof}
We will only outline the main details for showing the implication (a) $\Rightarrow$ (c) in (i) since the use of arguments contained in the proof of \cite[Proposition 7]{2013JFA} is possible with appropriate modifications described as follows. For each $k\in {\mathbb N},$ we set
$$
M_{k}:=\Bigl\{ x\in X : (\exists n\in {\mathbb N}) \mbox{ s.t. }m_{n}-\Bigl|\bigl\{j\in {\mathbb N} : p_{m}^{Y}(T_{j}x)>k\bigr\} \cap [1,m_{n}]  \Bigr| \leq \frac{n}{k}\Bigr\}.
$$
Then it is very plain to show that for each $k\in {\mathbb N}$ the set $M_{k}$ is open as well as that the set $\bigcap_{k\in{\mathbb N}}M_{k}$ is consisted solely of $m_{n}$-distributionally $m$-unbounded vectors for $(T_{j})_{j\in {\mathbb N}}.$ It remains to be proved that for each $k\in {\mathbb N}$ the set $M_{k}$ is dense. To see this, we can repeat almost literally the arguments contained in the proof of above-mentioned Proposition 7, with the same terminology used and the sets
$$
A:=\Bigl\{1\leq j\leq m_{n} : p_{m}^{Y}(T_{j}u)>\epsilon\Bigr\},
$$
$$
B_{s}:=\Bigl\{ 1\leq j\leq m_{n} : p_{m}^{Y}(T_{j}u_{s})\leq k\Bigr\},\quad s=0,1,\cdot \cdot \cdot, 2k(1+m_{n})-1,
$$
where 
$$
u_{s}:=x+\frac{\delta s u}{2k(1+m_{n})C},\quad s=0,1,\cdot \cdot \cdot, 2k(1+m_{n})-1,
$$
and $n$ is chosen so that
$$
m_{n}-\Bigl| \bigl\{j\in {\mathbb N} : p_{m}^{Y}(T_{j}y_{k})>\epsilon\bigr\} \cap [1,m_{n}] \Bigr |\leq m_{n}-\frac{n}{2k};
$$
cf. \eqref{prcko-frcko}.
\end{proof}

\begin{cor}\label{betona}
Let $\lambda \in (0,1],$ $m\in {\mathbb N}$ and $(T_{j})_{j\in {\mathbb N}}$ be a sequence in $L(X,Y).$
\begin{itemize}
\item[(i)] The following assertions are equivalent:
\begin{itemize}
\item[(a)]
Suppose that there exist a number $\epsilon>0$, a zero sequence $(y_{k})$ in $X$ and a strictly increasing sequence $(N_{k})$ in ${\mathbb N}$ such that
\begin{align*}
N_{k}^{1/\lambda}-\Bigl| \bigl\{j\in {\mathbb N} : p_{m}^{Y}(T_{j}y_{k})>k\bigr\} \cap [1,N_{k}^{1/\lambda}] \Bigr |\leq \frac{N_{k}}{k},\quad k\in {\mathbb N}.
\end{align*}
\item[(b)] The set of $\lambda$-distributionally $m$-unbounded vectors for $(T_{j})_{j\in {\mathbb N}}$ is non-empty.
\item[(c)] The set of $\lambda$-distributionally $m$-unbounded vectors for $(T_{j})_{j\in {\mathbb N}}$ is residual in $X.$
\end{itemize}
\item[(ii)] The following assertions are equivalent:
\begin{itemize}
\item[(a)'] Suppose that there exist a number $\epsilon>0$, a zero sequence $(y_{k})$ in $X$ and a strictly increasing sequence $(N_{k})$ in ${\mathbb N}$ such that
\begin{align}\label{prckofff-frckofff}
N_{k}^{1/\lambda}-\Bigl| \bigl\{j\in {\mathbb N} : d_{Y}(T_{j}y_{k},0)>\epsilon \bigr\} \cap [1,N_{k}^{1/\lambda}] \Bigr |\leq \frac{N_{k}}{k},\quad k\in {\mathbb N}.
\end{align}  
\item[(b)'] The set of $\lambda$-distributionally unbounded vectors for $(T_{j})_{j\in {\mathbb N}}$ is non-empty.
\item[(c)'] The set of $\lambda$-distributionally unbounded vectors for $(T_{j})_{j\in {\mathbb N}}$ is residual in $X.$
\end{itemize}
\end{itemize}
\end{cor}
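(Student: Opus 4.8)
The plan is to read Corollary~\ref{betona} as the specialization of Proposition~\ref{beton} to the sequence $m_{n}:=n^{1/\lambda}$. First I would verify that $(n^{1/\lambda})\in{\mathrm R}$: since $\lambda\in(0,1]$ gives $1/\lambda\geq 1$, the sequence is increasing and $\liminf_{n\to\infty}\frac{n^{1/\lambda}}{n}=\liminf_{n\to\infty}n^{(1-\lambda)/\lambda}\geq 1>0$. By the terminological convention fixed earlier for the case $m_{n}\equiv n^{1/\lambda}$, a vector is $\lambda$-distributionally ($m$-)unbounded for $(T_{j})_{j\in{\mathbb N}}$ exactly when it is $m_{n}$-distributionally ($m$-)unbounded with this $(m_{n})$, and $\underline{d}_{m_{n}}=\underline{d}_{1/\lambda}$. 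Hence every equivalence in Corollary~\ref{betona} is the corresponding equivalence in Proposition~\ref{beton} with $m_{N_{k}}$ read as $N_{k}^{1/\lambda}$, once the hypotheses (a) and (a)$'$ are matched. For part (ii) nothing further is needed: condition (a)$'$ of the corollary carries the same fixed threshold $\epsilon$ as (a)$'$ of the proposition, so the equivalence of (a)$'$, (b)$'$ and (c)$'$ is immediate.

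The only point deserving attention is part (i), where the corollary phrases condition (a) with the growing threshold $k$ rather than the fixed $\epsilon$ of Proposition~\ref{beton}(i)(a). Writing (a)$_{k}$ and (a)$_{\epsilon}$ for the two forms, I would close the loop
\[
\text{(a)}_{k}\ \Rightarrow\ \text{(a)}_{\epsilon}\ \Rightarrow\ \text{(c)}\ \Rightarrow\ \text{(b)}\ \Rightarrow\ \text{(a)}_{k}.
\]
The first implication is trivial with $\epsilon=1$, since $\{j:p_{m}^{Y}(T_{j}y_{k})>k\}\subseteq\{j:p_{m}^{Y}(T_{j}y_{k})>1\}$ for every $k\geq 1$, so the deficit on $[1,N_{k}^{1/\lambda}]$ can only shrink; the middle two implications are supplied verbatim by Proposition~\ref{beton}(i) together with the elementary step that a residual set in the Baire space $X$ is non-empty.

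The last implication, (b)$\Rightarrow$(a)$_{k}$, is where the only genuine bookkeeping lies, and I expect it to be the single nontrivial step. Given a $\lambda$-distributionally $m$-unbounded vector $x$, with witnessing set $B\subseteq{\mathbb N}$ satisfying $\underline{d}_{1/\lambda}(B^{c})=0$ and $\lim_{j\in B,\,j\to\infty}p_{m}^{Y}(T_{j}x)=\infty$, I would set $y_{k}:=x/k$, which is a zero sequence. For each $k$ there is $J_{k}$ with $p_{m}^{Y}(T_{j}y_{k})=p_{m}^{Y}(T_{j}x)/k>k$ for all $j\in B$ with $j\geq J_{k}$, whence
\[
N_{k}^{1/\lambda}-\Bigl|\bigl\{j:p_{m}^{Y}(T_{j}y_{k})>k\bigr\}\cap[1,N_{k}^{1/\lambda}]\Bigr|\leq \bigl|B^{c}\cap[1,N_{k}^{1/\lambda}]\bigr|+\bigl|B\cap[1,J_{k})\bigr|+1,
\]
the trailing $1$ absorbing the rounding of $N_{k}^{1/\lambda}$. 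Choosing $N_{k}$ large along a subsequence realizing the $\liminf$ in $\underline{d}_{1/\lambda}(B^{c})=0$ forces the first term below $N_{k}/(2k)$, while the fixed finite quantity $|B\cap[1,J_{k})|+1$ is below $N_{k}/(2k)$ for $N_{k}$ large; this yields the (a)$_{k}$ inequality. Thus there is no deep obstacle: all the content is carried by Proposition~\ref{beton}, and what remains is the routine threshold and density bookkeeping just sketched.
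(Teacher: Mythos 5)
Your proposal is correct and follows exactly the route the paper intends: the corollary is the specialization of Proposition \ref{beton} to $m_{n}:=n^{1/\lambda}$, for which the paper supplies no separate proof. Your extra care in closing the loop (b) $\Rightarrow$ (a)$_{k}$ to reconcile the growing threshold $p_{m}^{Y}(T_{j}y_{k})>k$ in the corollary's condition (a) with the fixed threshold $\epsilon$ of the proposition is sound (the rescaling $y_{k}=x/k$ and the choice of $N_{k}$ along the subsequence realizing $\underline{d}_{1/\lambda}(B^{c})=0$ both work) and in fact addresses a discrepancy the paper leaves silent.
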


Considering the sets 
$$
M_{k,m}:=\Bigl\{ x\in X : (\exists n\in {\mathbb N}) \mbox{ s.t. }m_{n}-\Bigl|\bigl\{j\in {\mathbb N} : p_{m}^{Y}(T_{j}x)<1/k\bigr\} \cap [1,m_{n}]  \Bigr| \leq \frac{n}{k}\Bigr\},
$$
we can similarly prove the following extension of \cite[Proposition 9]{2013JFA}:

\begin{prop}\label{nemoj}
Suppose that $(T_{j})_{j\in {\mathbb N}}$ is a sequence in $L(X,Y).$ If the set of those vectors $x\in X$ for which there exists a set $B\subseteq {\mathbb N}$ such that $\underline{d}_{m_{n}}(B^{c})=0$ and $\lim_{j \in B}T_{j}x=0$ is dense in $X,$ then the set of $m_{n}$-distributionally near to zero vectors for $(T_{j})_{j\in {\mathbb N}}$
is residual in $X.$
\end{prop}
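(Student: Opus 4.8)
The plan is to realize the set $D$ of $m_{n}$-distributionally near to zero vectors as a $G_{\delta}$ set and then invoke the Baire category theorem. Concretely, I would establish the two inclusions $\bigcap_{k,m\in\mathbb{N}}M_{k,m}\subseteq D$ and, conversely, $D\subseteq M_{k,m}$ for every $k,m\in\mathbb{N}$. Since each $M_{k,m}$ is open, the hypothesis that $D$ is dense then forces each $M_{k,m}$ to be a dense open set; hence $\bigcap_{k,m}M_{k,m}$ is a dense $G_{\delta}$, so residual, and consequently its superset $D$ is residual as well.

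First I would check that each $M_{k,m}$ is open. For a fixed $n\in\mathbb{N}$ the maps $x\mapsto p_{m}^{Y}(T_{j}x)$ $(1\le j\le m_{n})$ are continuous, so $\{x:p_{m}^{Y}(T_{j}x)<1/k\}$ is open and the requirement that at least $\lceil m_{n}-n/k\rceil$ of these inequalities hold cuts out an open set; taking the union over $n$ preserves openness. The inclusion $D\subseteq M_{k,m}$ is the easy direction: if $x$ is near to zero via a set $B$, then for fixed $m,k$ the set $\{j:p_{m}^{Y}(T_{j}x)\ge 1/k\}$ differs from $B^{c}$ by a finite set, hence has $\underline{d}_{m_{n}}=0$, so $\liminf_{n}\frac{1}{n}\bigl|\{j\in[1,m_{n}]:p_{m}^{Y}(T_{j}x)\ge 1/k\}\bigr|=0$, and one selects $n$ large enough (to absorb the harmless floor discrepancy $m_{n}-\lfloor m_{n}\rfloor\le 1$) witnessing $x\in M_{k,m}$.

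The substantive step, and the main obstacle, is the reverse inclusion $\bigcap_{k,m}M_{k,m}\subseteq D$, which is a diagonalization producing the set $A$ in the definition of near to zero. Fix $x\in\bigcap_{k,m}M_{k,m}$. For fixed $m$ and any $k\ge m$, membership in $M_{k,m}$ yields some $n$ with $|\{j\in[1,m_{n}]:p_{m}^{Y}(T_{j}x)\ge 1/k\}|\le n/k$; since $\{p_{m}^{Y}(T_{j}x)\ge 1/m\}\subseteq\{p_{m}^{Y}(T_{j}x)\ge 1/k\}$ for $k\ge m$, this already gives $\underline{d}_{m_{n}}(\{j:p_{m}^{Y}(T_{j}x)\ge 1/k\})=0$ for all $k,m$. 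Writing $E_{l}:=\{j\in\mathbb{N}:p_{l}^{Y}(T_{j}x)\ge 1/l\}$ and using that the seminorms are increasing, one checks $E_{1}\subseteq E_{2}\subseteq\cdots$ with $\underline{d}_{m_{n}}(E_{l})=0$ for each $l$. I would then pick $n_{1}<n_{2}<\cdots$ with $|E_{l}\cap[1,m_{n_{l}}]|/n_{l}<1/l$ and set $A^{c}:=\bigcup_{l\ge 1}\bigl(E_{l}\cap(m_{n_{l-1}},m_{n_{l}}]\bigr)$ with $m_{n_{0}}:=0$. Monotonicity of $(E_{l})$ gives $A^{c}\cap[1,m_{n_{L}}]\subseteq E_{L}\cap[1,m_{n_{L}}]$, whence $\underline{d}_{m_{n}}(A^{c})=0$; and for $j\in A$ lying in the block $(m_{n_{l-1}},m_{n_{l}}]$ one has $p_{l}^{Y}(T_{j}x)<1/l$, hence $p_{p}^{Y}(T_{j}x)<1/l$ for all $p\le l$, which forces $\lim_{j\in A}T_{j}x=0$ because $m_{n}\to\infty$ ensures the blocks exhaust $\mathbb{N}$. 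Thus $x\in D$.

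Assembling these pieces yields $D=\bigcap_{k,m}M_{k,m}$, an open-countable intersection, so $D$ is residual by Baire and the proof is complete. The only delicate points I anticipate are the bookkeeping in the diagonal selection of the integers $n_{l}$ (keeping them strictly increasing while controlling each block's contribution) and the elementary floor/counting discrepancies between $m_{n}$ and $\lfloor m_{n}\rfloor$, none of which affects the $\liminf$ computations.
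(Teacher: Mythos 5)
Your overall strategy is exactly the one the paper intends: introduce the open sets $M_{k,m}$, use $D\subseteq M_{k,m}$ (where $D$ is the set of $m_{n}$-distributionally near to zero vectors) so that density of $D$ makes each $M_{k,m}$ dense, and prove $\bigcap_{k,m}M_{k,m}\subseteq D$ so that $D$ contains a dense $G_{\delta}$. The openness argument, the easy inclusion, and the block construction of the set $A$ from the increasing sets $E_{l}$ are all fine. The gap is the sentence ``this already gives $\underline{d}_{m_{n}}(\{j:p_{m}^{Y}(T_{j}x)\ge 1/k\})=0$.'' Membership of $x$ in $M_{k,m}$ supplies, for each $k$, only \emph{some} witness $n=n(k)$ with $\bigl|\{j\in[1,m_{n}]:p_{m}^{Y}(T_{j}x)\ge 1/k\}\bigr|\le n/k$; nothing forces these witnesses to tend to infinity, so what you actually obtain is $\inf_{n\in{\mathbb N}}\frac{1}{n}\bigl|E\cap[1,m_{n}]\bigr|=0$ rather than $\liminf_{n\to\infty}\frac{1}{n}\bigl|E\cap[1,m_{n}]\bigr|=0$ (the paper explicitly warns in Subsection \ref{publi} that these quantities differ once $m_{n}\neq n$). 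Concretely, if $m_{1}\in{\mathbb N}$ and $x$ satisfies $T_{j}x=0$ for all $j\le m_{1}$ while $p_{1}^{Y}(T_{j}x)\ge 1$ for all $j>m_{1}$, then the witness $n=1$ places $x$ in every $M_{k,m}$, yet $\underline{d}_{m_{n}}(E_{1})=\liminf_{n\to\infty}\frac{\lfloor m_{n}\rfloor-\lfloor m_{1}\rfloor}{n}=\liminf_{n\to\infty}\frac{m_{n}}{n}>0$ because $(m_{n})\in{\mathrm R}$, so $x$ is not $m_{n}$-distributionally near to zero. Hence the inclusion $\bigcap_{k,m}M_{k,m}\subseteq D$, and with it your claimed equality $D=\bigcap_{k,m}M_{k,m}$, fails as stated.

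The repair is small and standard: work instead with
$M_{k,m}^{N}:=\bigl\{x\in X:(\exists n\ge N)\ m_{n}-\bigl|\{j\in{\mathbb N}:p_{m}^{Y}(T_{j}x)<1/k\}\cap[1,m_{n}]\bigr|\le n/k\bigr\}$
and intersect over $k$, $m$ and $N$ (equivalently, demand $n\ge k$ in the definition of $M_{k,m}$). Each $M_{k,m}^{N}$ is still open, and the easy inclusion $D\subseteq M_{k,m}^{N}$ still holds because a near-to-zero vector admits arbitrarily large witnesses $n$. Now the witnesses extracted from $x\in\bigcap_{N}M_{k,m}^{N}$ are unbounded, $\underline{d}_{m_{n}}(E_{l})=0$ follows, and your diagonal construction of $A$ goes through verbatim. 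With this modification your proof is complete and coincides with the argument the paper sketches.
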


\begin{cor}\label{nemojaaa}
Suppose that $\lambda \in (0,1]$ and $(T_{j})_{j\in {\mathbb N}}$ is a sequence in $L(X,Y).$ If the set of those vectors $x\in X$ for which there exists a set $B\subseteq {\mathbb N}$ such that $\underline{d}_{1/\lambda}(B^{c})=0$
and $\lim_{j \in B}T_{j}x=0$ is dense in $X,$ then the set of $\lambda$-distributionally near to zero vectors for $(T_{j})_{j\in {\mathbb N}}$
is residual in $X.$
\end{cor}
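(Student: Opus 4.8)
The plan is to obtain this statement as the immediate specialization of Proposition \ref{nemoj} to the sequence $m_{n}:=n^{1/\lambda}$. First I would record that this sequence belongs to the class ${\mathrm R}$: since $\lambda \in (0,1]$ we have $1/\lambda \geq 1$, hence $m_{n}=n^{1/\lambda}\geq n$ for every $n\in {\mathbb N}$, and therefore $\liminf_{n\rightarrow \infty}\frac{m_{n}}{n}\geq 1>0$. Thus Proposition \ref{nemoj} is applicable with this choice of $(m_{n})$.

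Next I would match the data. By Definition \ref{prckojed}, the lower $(m_{n})$-density for $m_{n}=n^{1/\lambda}$ coincides with the lower $(1/\lambda)$-density, because $[1,m_{n}]=[1,n^{1/\lambda}]$; that is, $\underline{d}_{m_{n}}(\cdot)=\underline{d}_{1/\lambda}(\cdot)$. In particular, the hypothesis of the corollary, namely that the set of those $x\in X$ admitting a set $B\subseteq {\mathbb N}$ with $\underline{d}_{1/\lambda}(B^{c})=0$ and $\lim_{j\in B}T_{j}x=0$ be dense in $X$, is literally the density hypothesis of Proposition \ref{nemoj} for this sequence. Likewise, by the terminological convention fixed after Definition \ref{idiotisen} (that $m_{n}\equiv n^{1/\lambda}$ produces the prefix ``$\lambda$-''), a vector is $\lambda$-distributionally near to zero for $(T_{j})_{j\in {\mathbb N}}$ precisely when it is $m_{n}$-distributionally near to zero. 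Hence the conclusion sought is exactly the conclusion of Proposition \ref{nemoj} for $(m_{n})$.

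Therefore I would simply invoke Proposition \ref{nemoj}: its hypothesis holds, so the set of $m_{n}$-distributionally near to zero vectors, equivalently the set of $\lambda$-distributionally near to zero vectors, is residual in $X$, which is the desired assertion. There is essentially no obstacle here beyond bookkeeping, since all the analytic content, that is, the construction of the dense open sets $M_{k,m}$ witnessing residuality and adapted from \cite[Proposition 9]{2013JFA}, is already carried out in the proof of Proposition \ref{nemoj}. The only points requiring care are the verification that $(n^{1/\lambda})\in {\mathrm R}$ and the identification $\underline{d}_{1/\lambda}=\underline{d}_{n^{1/\lambda}}$, both of which are immediate from the definitions.
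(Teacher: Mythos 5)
Your proposal is correct and is exactly the route the paper intends: the corollary is stated without proof as the immediate specialization of Proposition \ref{nemoj} to $m_{n}=n^{1/\lambda}$, and your verifications that $(n^{1/\lambda})\in {\mathrm R}$ and that $\underline{d}_{m_{n}}=\underline{d}_{1/\lambda}$ are precisely the bookkeeping that justifies this.
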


Keeping in mind Proposition \ref{beton} and Proposition \ref{nemoj}, we can state the following extensions of the first parts in \cite[Theorem 3.7, Corollary 3.12]{mendoza} for $m_{n}$-distributional chaos:

\begin{prop}\label{aaa}
Let $(m_{n})\in {\mathrm R}$ and $(T_{j})_{j\in {\mathbb N}}$ be a sequence in $L(X,Y).$ 
Suppose that the set consisting of those vectors $x\in X$ for which there exists a set $B\subseteq {\mathbb N}$ such that $\underline{d}_{m_{n}}(B^{c})=0$ and $\lim_{j \in B}T_{j}x=0$ is dense in $X,$ as well as that there exist a number $\epsilon>0$, a zero sequence $(y_{k})$ in $X$ and a strictly increasing sequence $(N_{k})$ in ${\mathbb N}$ such that \eqref{prckof-frckof} holds.
Then the set of $m_{n}$-distributionally irregular vectors for $(T_{j})_{j\in {\mathbb N}}$ is residual in $X.$
\end{prop}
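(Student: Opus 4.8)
The plan is to observe that, by Definition \ref{-ira}(i), a vector $x$ is an $m_{n}$-distributionally irregular vector for $(T_{j})_{j\in {\mathbb N}}$ precisely when it belongs to the intersection of the set $\mathcal{N}$ of $m_{n}$-distributionally near to zero vectors and the set $\mathcal{U}$ of $m_{n}$-distributionally unbounded vectors. Hence it suffices to prove that each of $\mathcal{N}$ and $\mathcal{U}$ is residual in $X$ and then to intersect, using that a Fr\'echet space is a Baire space.

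First I would dispatch $\mathcal{U}$. The second standing hypothesis is exactly condition (a)$'$ of Proposition \ref{beton}(ii): there exist $\epsilon>0,$ a zero sequence $(y_{k})$ and a strictly increasing sequence $(N_{k})$ for which \eqref{prckof-frckof} holds. Therefore the equivalence (a)$'$ $\Leftrightarrow$ (c)$'$ of that proposition yields at once that $\mathcal{U}$ is residual in $X.$ Next I would dispatch $\mathcal{N}$. The first standing hypothesis says precisely that the set of those $x\in X$ admitting a set $B\subseteq {\mathbb N}$ with $\underline{d}_{m_{n}}(B^{c})=0$ and $\lim_{j\in B}T_{j}x=0$ is dense in $X.$ This is exactly the hypothesis of Proposition \ref{nemoj}, whose conclusion is that $\mathcal{N}$ is residual in $X.$

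Finally, since $X$ is a Fr\'echet space, and hence a Baire space (being a complete metric space), the complements of $\mathcal{N}$ and of $\mathcal{U}$ are meager, so their union is meager and $\mathcal{N}\cap \mathcal{U}$ is residual. Every element of $\mathcal{N}\cap \mathcal{U}$ is, by Definition \ref{-ira}(i), an $m_{n}$-distributionally irregular vector, so the set of such vectors contains a residual set and is itself residual, as claimed.

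I do not expect any genuine obstacle here: the whole content of the argument is the matching of the two standing hypotheses against Proposition \ref{beton}(ii) and Proposition \ref{nemoj}, respectively, followed by the stability of residuality under finite intersection in a Baire space. The only point that requires a line of verification is that the two sets $\mathcal{N}$ and $\mathcal{U}$ appearing in the conclusions of those propositions are exactly the two sets whose intersection defines $m_{n}$-distributional irregularity, which is immediate from the definitions.
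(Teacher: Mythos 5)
Your proposal is correct and is exactly the argument the paper intends: the paper states Proposition \ref{aaa} with the preamble ``Keeping in mind Proposition \ref{beton} and Proposition \ref{nemoj}...'', i.e., one obtains residuality of the $m_{n}$-distributionally unbounded vectors from Proposition \ref{beton}(ii) and of the $m_{n}$-distributionally near-to-zero vectors from Proposition \ref{nemoj}, and intersects the two residual sets in the Baire space $X$. Your matching of the hypotheses and of Definition \ref{-ira}(i) is accurate, so nothing is missing.
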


\begin{cor}\label{aaabar}
Let $\lambda \in (0,1]$ and $(T_{j})_{j\in {\mathbb N}}$ be a sequence in $L(X,Y).$ 
Suppose that the set consisting of those vectors $x\in X$ for which there exists a set $B\subseteq {\mathbb N}$ such that $\underline{d}_{1/\lambda}(B^{c})=0$ and $\lim_{j \in B}T_{j}x=0$ is dense in $X,$ as well as that there exist a number $\epsilon>0$, a zero sequence $(y_{k})$ in $X$ and a strictly increasing sequence $(N_{k})$ in ${\mathbb N}$ such that \eqref{prckofff-frckofff} holds. 
Then the set of $\lambda$-distributionally irregular vectors for $(T_{j})_{j\in {\mathbb N}}$ is residual in $X.$
\end{cor}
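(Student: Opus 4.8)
The plan is to recognize the set of $m_{n}$-distributionally irregular vectors as the intersection of two sets, each of which is residual by one of the two preceding results, and then to invoke the Baire category theorem. Indeed, by Definition \ref{-ira}(i), a vector $x$ is an $m_{n}$-distributionally irregular vector for $(T_{j})_{j\in {\mathbb N}}$ if and only if $x$ is simultaneously $m_{n}$-distributionally near to $0$ and $m_{n}$-distributionally unbounded for $(T_{j})_{j\in {\mathbb N}}$. Hence the set of such vectors is exactly $\mathcal{N}\cap \mathcal{U}$, where $\mathcal{N}$ denotes the set of $m_{n}$-distributionally near to zero vectors and $\mathcal{U}$ denotes the set of $m_{n}$-distributionally unbounded vectors for $(T_{j})_{j\in {\mathbb N}}$.

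First I would dispose of $\mathcal{N}$. The first standing hypothesis of the proposition states precisely that the set of those $x\in X$ for which there is $B\subseteq {\mathbb N}$ with $\underline{d}_{m_{n}}(B^{c})=0$ and $\lim_{j\in B}T_{j}x=0$ is dense in $X$; this is exactly the hypothesis of Proposition \ref{nemoj}, whose conclusion is that $\mathcal{N}$ is residual in $X$. Next I would dispose of $\mathcal{U}$. The second standing hypothesis, namely the existence of $\epsilon>0$, a zero sequence $(y_{k})$ and a strictly increasing $(N_{k})$ satisfying \eqref{prckof-frckof}, is literally assertion (a)$'$ of Proposition \ref{beton}(ii); by the equivalence (a)$'$ $\Leftrightarrow$ (c)$'$ established there, assertion (c)$'$ holds, i.e.\ $\mathcal{U}$ is residual in $X$.

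Finally, since $X$ is a Fr\'echet space it is a complete metrizable topological vector space, hence a Baire space, so a finite intersection of residual subsets is again residual; therefore $\mathcal{N}\cap \mathcal{U}$ is residual in $X$, which is the desired conclusion. I expect essentially no real obstacle here, as all the genuine work has already been carried out in Proposition \ref{beton} and Proposition \ref{nemoj}; the only points requiring care are the bookkeeping observation that ``irregular'' decomposes exactly as ``near to zero'' and ``unbounded'' (so that no extra compatibility of the two witnessing sets $A$ and $B$ is needed, each condition being an unconstrained existential statement), and the invocation of the Baire property of $X$, which legitimizes passing from two individually residual sets to their residual intersection. The corollary \ref{aaabar} is then the special case $m_{n}\equiv n^{1/\lambda}$, $\lambda\in(0,1]$, obtained verbatim from the same two ingredients specialized through Corollary \ref{betona}(ii) and Corollary \ref{nemojaaa}.
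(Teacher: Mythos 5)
Your proposal is correct and coincides with the paper's intended argument: the paper states Corollary \ref{aaabar} (and Proposition \ref{aaa}) precisely as a consequence of Proposition \ref{beton} and Proposition \ref{nemoj} (specialized in Corollary \ref{betona} and Corollary \ref{nemojaaa}), the set of $\lambda$-distributionally irregular vectors being the intersection of the residual set of $\lambda$-distributionally near-to-zero vectors with the residual set of $\lambda$-distributionally unbounded vectors in the Baire space $X$. Your observation that the two witnessing sets $A$ and $B$ in Definition \ref{-ira}(i) need no compatibility is exactly the right bookkeeping point.
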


If $T\in L(X),$ then we are in a position to extend the assertion of \cite[Theorem 12]{2013JFA} for $m_{n}$-distributional chaos in a rather technical way. For these purposes, we introduce the $m_{n}$-Distributionally Chaotic Criterion and $\lambda$--Distributionally Chaotic Criterion in the following ways:
\begin{itemize}
\item[(DCC$_{m_{n}}$)] There exist a number $\epsilon>0$, a set $B\subseteq {\mathbb N},$ two sequences $(x_{k})$ and $(y_{k})$ in $X$ as well as a strictly increasing sequence $(N_{k})$ of natural numbers such that $\underline{d}_{m_{n}}(B^{c})=0$, $\lim_{n\in B}T^{n}x_{k}=0,$
$y_{k}\in \overline{span\{x_{n} : n\in {\mathbb N}\}},$ $\lim_{k\rightarrow \infty}y_{k}=0$ and \eqref{prckof-frckof} holds with $T_{j}\equiv T^{j};$
\end{itemize}
\begin{itemize}
\item[(DCC$_{\lambda}$)] There exist a number $\epsilon>0$, a set $B\subseteq {\mathbb N},$ two sequences $(x_{k})$ and $(y_{k})$ in $X$ as well as a strictly increasing sequence $(N_{k})$ of natural numbers such that $\underline{d}_{1/\lambda}(B^{c})=0$, $\lim_{n\in B}T^{n}x_{k}=0,$
$y_{k}\in \overline{span\{x_{n} : n\in {\mathbb N}\}},$ $\lim_{k\rightarrow \infty}y_{k}=0$ and \eqref{prckofff-frckofff} holds with $T_{j}\equiv T^{j}.$
\end{itemize}
Then we have:

\begin{thm}\label{ruza}
Suppose that $T\in L(X)$ and $(m_{n})\in {\mathrm R}.$ Then the following assertions are equivalent:
\begin{itemize}
\item[(i)] $T$ satisfies \emph{(DCC$_{m_{n}}$).}
\item[(ii)] There is an $m_{n}$-distributionally irregular vector for $T.$
\item[(iii)] $T$ is $m_{n}$-distributionally chaotic. 
\item[(iv)] There is an $m_{n}$-distributionally chaotic pair  of type $1$ for $T.$
\item[(v)] $T$ is $m_{n}$-distributionally chaotic of type $1.$
\item[(vi)] There is an $m_{n}$-distributionally irregular vector of type $1$ for $T.$
\end{itemize}
\end{thm}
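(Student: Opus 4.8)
The plan is to prove the cyclic chain
\[
(\mathrm{i}) \Rightarrow (\mathrm{ii}) \Rightarrow (\mathrm{iii}) \Rightarrow (\mathrm{v}) \Rightarrow (\mathrm{iv}) \Rightarrow (\mathrm{vi}) \Rightarrow (\mathrm{i}),
\]
using throughout that $T_j \equiv T^j \in L(X)$ (so every domain is all of $X$) and that the translation invariance of $d$ gives $d(T^j x, T^j y)=d(T^j(x-y),0)$. I would dispose of the five routine arrows first, isolating the one genuinely substantial implication $(\mathrm{vi}) \Rightarrow (\mathrm{i})$.

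For $(\mathrm{i}) \Rightarrow (\mathrm{ii})$ I argue as in Proposition \ref{aaa}, but localized to the closed subspace $X_0 := \overline{span\{x_n : n \in {\mathbb N}\}}$ furnished by (DCC$_{m_n}$). Finite combinations of the $x_n$ are dense in $X_0$ and each is $m_n$-distributionally near to $0$ along the common set $B$, so Proposition \ref{nemoj} makes the near-to-zero vectors residual in $X_0$; since $y_k \in X_0$, $y_k \to 0$ and \eqref{prckof-frckof} holds, Proposition \ref{beton}(ii) makes the $m_n$-distributionally unbounded vectors residual in $X_0$, and a Baire intersection yields an $m_n$-distributionally irregular vector. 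For $(\mathrm{ii}) \Rightarrow (\mathrm{iii})$ I use the span construction recorded in observation A: for a nonzero irregular $x$, the estimate $d(\alpha x, \beta x) \ge \frac{|\alpha-\beta|}{1+|\alpha-\beta|}d(0,x)$ and translation invariance show that $span\{x\}$ is an uncountable uniformly distributionally irregular manifold; since $x$ is genuinely unbounded, $F_{(\alpha-\beta)x,0,m_n}(\sigma)=0$ holds for \emph{every} $\sigma>0$, so the constant is uniform, which is exactly (iii). Then $(\mathrm{iii}) \Rightarrow (\mathrm{v}) \Rightarrow (\mathrm{iv})$ are immediate specializations, and $(\mathrm{iv}) \Rightarrow (\mathrm{vi})$ follows by setting $z:=x-y$ for a type-$1$ pair $(x,y)$: translation invariance gives $G_{z,0,m_n}\equiv 0$ and $F_{z,0,m_n}(\sigma)=0$, and the diagonal argument equating $G_{z,0,m_n}\equiv 0$ with $m_n$-distributional nearness to $0$ (cf. \cite[Proposition 8]{2013JFA}) shows $z$ is a type-$1$ irregular vector.

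The whole weight of the theorem rests on $(\mathrm{vi}) \Rightarrow (\mathrm{i})$. The near-to-zero half of (DCC$_{m_n}$) is free: with $B:=A$ (the set along which $T^j x \to 0$) and $x_k \equiv x$ one has $\underline{d}_{m_n}(B^c)=0$ and $\lim_{n\in B}T^n x_k = 0$. What remains is to produce a zero sequence $(y_k)$ in $\overline{span\{x_n\}}$ and integers $(N_k)$ for which \eqref{prckof-frckof} holds; by Proposition \ref{beton}(ii) this is the same as forcing residuality of the $m_n$-distributionally unbounded vectors. I would try to feed the windows supplied by $F_{x,0,m_n}(\sigma)=0$ — along a subsequence $N_k$ the set $\{\,j\le m_{N_k}: d(T^j x,0)\ge \sigma\,\}$ exhausts all but $\le N_k/k$ of $[1,m_{N_k}]$ — into a shifted family such as $y_k := T^{l_k}x$ with $l_k \in B$, $l_k \to \infty$, so that $y_k \to 0$, choosing $N_k$ (using $(m_n)\in{\mathrm R}$) so that the orbit of $y_k$ remains $>\epsilon$ off a set of size $\le N_k/k$.

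The hard part will be exactly this last step. The type-$1$ hypothesis only asserts that $d(T^j x,0)$ is bounded \emph{below} by $\sigma$ on a large initial window, whereas \eqref{prckof-frckof} for a vector tending to $0$ demands that the orbit stay $>\epsilon$ \emph{after} its amplitude has been scaled down — that is, it demands genuine distributional unboundedness, not mere separation from $0$. This cannot be extracted from $span\{x\}$ alone, on which no nonzero multiple has a larger orbit than $x$. The natural route is to first deduce from type-$1$ chaos that $\{T^j\}$ fails to be equicontinuous and hence, via the uniform boundedness principle, that vectors with unbounded orbit exist, and then to run a single Baire/diagonal construction producing a vector that is simultaneously $m_n$-distributionally near to $0$ and $m_n$-distributionally unbounded. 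Reconciling the common density set $B$ for the near-to-zero data with the windows required for the unbounded data — against the $\liminf$ (rather than $\inf$) in $\underline{d}_{m_n}$ — is the delicate point, and is presumably what is meant by extending \cite[Theorem 12]{2013JFA} "in a rather technical way."
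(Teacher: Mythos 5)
Your five routine arrows are sound and essentially coincide with what the paper does: (i)$\Rightarrow$(ii) is Proposition \ref{aaa} localized to $\overline{span\{x_{n} : n\in {\mathbb N}\}}$, (ii)$\Rightarrow$(iii) is observation [A.], (iii)$\Rightarrow$(v)$\Rightarrow$(iv) are definitional, and your (iv)$\Rightarrow$(vi) via $z=x-y$ and the diagonal argument is the first half of what the paper needs for (iv)$\Rightarrow$(ii). The structural difference is that the paper never attempts (vi)$\Rightarrow$(i) at all: it declares the equivalence of (i)--(iv) ``already proved'' (meaning the $m_{n}$-analogue of \cite[Theorem 12]{2013JFA}, obtained from Proposition \ref{beton} and Proposition \ref{nemoj}) and merely appends (ii)$\Rightarrow$(vi)$\Rightarrow$(v)$\Rightarrow$(iv) as trivial arrows, so the entire burden sits in the (iv)$\Rightarrow$(i) direction of that pre-established equivalence rather than where you placed it.

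The genuine gap in your proposal is that (vi)$\Rightarrow$(i) is not carried out, and your diagnosis of why it is hard sends you down the wrong road. Your first instinct --- the shifted family $y_{k}:=T^{l_{k}}x$ with $l_{k}\in A$, where $A$ is the set along which $T^{j}x\to 0$ --- is in fact the correct and standard mechanism, and it is exactly why (DCC$_{m_{n}}$) allows $y_{k}\in \overline{span\{x_{n} : n\in {\mathbb N}\}}$ rather than $y_{k}\in span\{x\}$: take $x_{k}:=T^{l_{k}}x$ and $B:=A$ (so $T^{j}x_{k}=T^{l_{k}}(T^{j}x)\to 0$ along $A$ by continuity of $T^{l_{k}}$), and $y_{k}:=T^{l_{k}}x\to 0$; then $d(T^{j}y_{k},0)=d(T^{j+l_{k}}x,0)\geq \sigma$ for all $j$ in a translate of the separation set, whose complement in $[1,m_{N_{k}}]$ is controlled by $F_{x,0,m_{n}}(\sigma)=0$ once $N_{k}$ is chosen after $l_{k}$ using $(m_{n})\in {\mathrm R}$. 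So \eqref{prckof-frckof} does not demand genuine distributional unboundedness of any single vector: it demands vectors tending to $0$ whose orbits stay $\sigma$-separated on large windows, and orbit shifts supply these because the orbit approaches $0$ along $A$, not because anything is rescaled. Your fallback route (non-equicontinuity plus uniform boundedness) cannot close the gap: an unbounded orbit only yields reiterative, i.e.\ Banach-density, unboundedness as in Remark \ref{pripazise}, not a set $B$ with $\underline{d}_{m_{n}}(B^{c})=0$. What remains after the shift argument is the bookkeeping you correctly flag --- re-indexing the windows $[1,m_{N}]$ after translation by $l_{k}$ against the $\liminf$ defining $\underline{d}_{m_{n}}$ --- which is precisely the ``rather technical'' part the paper alludes to and likewise declines to write out.
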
 

\begin{proof}
The equivalence of (i), (ii), (iii) and (iv) have been already proved. The implication (ii) $\Rightarrow$ (vi) is trivial, the implication (vi) $\Rightarrow$ (v) follows from the last statement in [A.], while the implication (v) $\Rightarrow$ (iv) follows directly from definition. This completes the proof.   
\end{proof}

\begin{cor}\label{ruza-prim}
Suppose that $T\in L(X)$ and $\lambda \in (0,1].$ Then the following assertions are equivalent:
\begin{itemize}
\item[(i)] $T$ satisfies \emph{(DCC$_{\lambda}$).}
\item[(ii)] There is a $\lambda$-distributionally irregular vector for $T.$
\item[(iii)] $T$ is $\lambda$-distributionally chaotic. 
\item[(iv)] There is a $\lambda$-distributionally chaotic pair for $T.$
\item[(v)] $T$ is $\lambda$-distributionally chaotic of type $1.$
\item[(vi)] There is a $\lambda$-distributionally irregular vector of type $1$ for $T.$
\end{itemize}
\end{cor}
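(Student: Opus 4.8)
The plan is to deduce Corollary~\ref{ruza-prim} directly from Theorem~\ref{ruza} by specializing to the sequence $m_{n}:=n^{1/\lambda}$. By the naming agreement adopted earlier, $\lambda$-distributional chaos (of type $1$), $\lambda$-distributionally irregular vectors (of type $1$), and $\lambda$-distributionally chaotic pairs are, by \emph{definition}, the corresponding $m_{n}$-notions evaluated at $m_{n}\equiv n^{1/\lambda}$; so the first thing to verify is that this sequence is admissible, i.e.\ that $(n^{1/\lambda})\in{\mathrm R}$. This is immediate: the sequence is increasing, and since $\lambda\in(0,1]$ we have $1/\lambda\geq 1$, so $m_{n}/n=n^{1/\lambda-1}\geq 1$ and hence $\liminf_{n\to\infty}m_{n}/n\geq 1>0$.

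Next I would match the six conditions of the corollary against those of the theorem under this substitution. The criterion (DCC$_{\lambda}$) is word-for-word (DCC$_{m_{n}}$) with $m_{n}=n^{1/\lambda}$; the only difference between the two displayed inequalities is that $m_{N_{k}}$ has been spelled out as $N_{k}^{1/\lambda}$, so that \eqref{prckofff-frckofff} is exactly \eqref{prckof-frckof} for this choice. Similarly, a $\lambda$-distributionally irregular vector (of type $1$) is by definition an $n^{1/\lambda}$-distributionally irregular vector (of type $1$), $\lambda$-distributional chaos (of type $1$) is $n^{1/\lambda}$-distributional chaos (of type $1$), and a $\lambda$-distributionally chaotic pair---a pair with $G_{x,y,m_{n}}\equiv 0$ and $F_{x,y,m_{n}}(\sigma)=0$---is precisely an $n^{1/\lambda}$-distributionally chaotic pair of type $1$. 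Thus items (i)--(vi) of Corollary~\ref{ruza-prim} coincide, term by term, with items (i)--(vi) of Theorem~\ref{ruza} applied to $(n^{1/\lambda})$, and all six equivalences follow at once.

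The only point worth a remark is that $N_{k}^{1/\lambda}$ need not be an integer; but $m_{N_{k}}$ enters every relevant expression only through an interval $[1,m_{N_{k}}]$ whose integer points are being counted, so passing from $m_{N_{k}}$ to $\lfloor m_{N_{k}}\rfloor$ leaves the cardinalities unchanged. I therefore expect no genuine obstacle here: the corollary is a pure specialization of Theorem~\ref{ruza}, with the membership $(n^{1/\lambda})\in{\mathrm R}$ established above being the only thing that actually needs checking.
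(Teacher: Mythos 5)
Your proposal is correct and matches the paper's treatment exactly: the paper states Corollary \ref{ruza-prim} with no separate proof, precisely because it is the specialization of Theorem \ref{ruza} to $m_{n}\equiv n^{1/\lambda}$ under the naming conventions fixed in Section \ref{marek-MLOs}, and the only substantive check is the membership $(n^{1/\lambda})\in{\mathrm R}$, which you verify. Your remark identifying a ``$\lambda$-distributionally chaotic pair'' with a pair of type $1$ (via (DC1)) is also the right way to reconcile item (iv) of the corollary with item (iv) of the theorem.
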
 

Concerning reiterative $m_{n}$-distributional chaos of types $1$ and $1+$, we will first state and prove the following two lemmas:

\begin{lem}\label{karakter}
Suppose that for each $j\in {\mathbb N},$ $T_{j} : D(T_{j}) \subseteq X \rightarrow Y$ is a linear operator, and $x\in \bigcap_{j\in {\mathbb N}}D(T_{j}).$
Then
there exists a finite number $r>0$ such that
$G_{x,0,m_{n}}(\delta)\leq c$
for $0<\delta<r$ iff there exist a finite number $d \in (0,\liminf_{n\rightarrow \infty}\frac{m_{n}}{n})$ and 
an infinite set $A\subseteq {\mathbb N}$ such that $\underline{d}_{m_{n}}(A^{c})=d$ and $\lim_{j\in A}T_{j}x=0.$  
\end{lem}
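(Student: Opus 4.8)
The plan is to recast both sides through the single nested family of level sets $B_{\delta}:=\{j\in\mathbb{N}: d_Y(T_jx,0)\ge\delta\}$, for which $G_{x,0,m_n}(\delta)=\underline{d}_{m_n}(B_\delta)$, and to abbreviate $\gamma:=\liminf_{n\to\infty}m_n/n>0$. Since $\delta\mapsto B_\delta$ is non-increasing, $\underline{d}_{m_n}(B_\delta)$ is a non-increasing function of $\delta$; hence the existence of some $r>0$ with $G_{x,0,m_n}(\delta)\le c$ on $(0,r)$ is equivalent to $\underline{d}_{m_n}(B_\delta)\le c$ for \emph{every} $\delta>0$, which disposes of $r$. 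I read the equivalence with $c$ matched to $d$ as in Definition \ref{reiterative-ira}(ii) (where $c$ is existential, $c\in(0,\gamma)$): going backward one takes $c=d$, and going forward one obtains $d\le c<\gamma$.

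For the backward implication, suppose $A$ is infinite with $\underline{d}_{m_n}(A^c)=d\in(0,\gamma)$ and $\lim_{j\in A}T_jx=0$. Then for each fixed $\delta>0$ only finitely many $j\in A$ satisfy $d_Y(T_jx,0)\ge\delta$, so $B_\delta\subseteq A^c\cup F$ with $F$ finite. As the lower $(m_n)$-density is monotone and unchanged under finite modifications, $G_{x,0,m_n}(\delta)=\underline{d}_{m_n}(B_\delta)\le\underline{d}_{m_n}(A^c)=d$ for all $\delta>0$, and taking $c:=d$ gives the left-hand condition.

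For the forward implication, assume $\underline{d}_{m_n}(B_\delta)\le c$ for all $\delta>0$. Fix $\delta_1>\delta_2>\cdots\downarrow 0$, so that $B_{\delta_1}\subseteq B_{\delta_2}\subseteq\cdots$, and build a strictly increasing sequence $N_1<N_2<\cdots$; on the $k$-th block $(m_{N_{k-1}},m_{N_k}]$ (with $m_{N_0}:=0$) I place into $A$ exactly the indices $j$ with $d_Y(T_jx,0)<\delta_k$. For $j\in A$ beyond the $(k-1)$-st block one has $d_Y(T_jx,0)<\delta_k$, whence $\lim_{j\in A}T_jx=0$. The crucial point is the control of $A^c$: because $B_{\delta_i}\subseteq B_{\delta_k}$ for $i\le k$, the part of $A^c$ in $[1,m_{N_k}]$ lies in $B_{\delta_k}\cap[1,m_{N_k}]$, so $|A^c\cap[1,m_{N_k}]|\le|B_{\delta_k}\cap[1,m_{N_k}]|$. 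Since $\underline{d}_{m_n}(B_{\delta_k})\le c$, I may choose each $N_k>N_{k-1}$ with $|B_{\delta_k}\cap[1,m_{N_k}]|/N_k<c+1/k$; reading the lower $(m_n)$-density of $A^c$ along $(N_k)$ then yields $\underline{d}_{m_n}(A^c)\le c<\gamma$. As a cofinite set has lower $(m_n)$-density $\gamma$, this bound forces $A$ to be infinite.

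It remains to secure a value $d\in(0,\gamma)$. If $\underline{d}_{m_n}(A^c)>0$, set $d:=\underline{d}_{m_n}(A^c)\in(0,c]\subseteq(0,\gamma)$. In the degenerate case $\underline{d}_{m_n}(A^c)=0$ (so $x$ is even $m_n$-distributionally near to $0$), fix $\beta\in(0,\gamma)$ and remove from $A$ a set $R$ whose elements are spread so that $|R\cap[1,m_n]|=\lfloor\beta n\rfloor$ for all large $n$; this is possible because $\gamma>\beta$ gives $\lfloor m_n\rfloor\ge\lfloor\beta n\rfloor$ eventually, and it yields $\underline{d}_{m_n}(R)=\overline{d}_{m_n}(R)=\beta$. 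Then $A\setminus R$ is still infinite with $\lim_{j\in A\setminus R}T_jx=0$, and from $\underline{d}_{m_n}(R)=\beta\le\underline{d}_{m_n}((A\setminus R)^c)\le\underline{d}_{m_n}(A^c)+\beta=\beta$ we get $\underline{d}_{m_n}((A\setminus R)^c)=\beta\in(0,\gamma)$. The main obstacle throughout is exactly this density bookkeeping over the irregular grid $(m_n)$: since $m_n/n$ need neither converge nor stay bounded, the estimate $\underline{d}_{m_n}(A^c)\le c$ must be extracted along the subsequences realizing the relevant lower limits, and the padding density $\beta$ must be matched to $n$ rather than to $m_n$.
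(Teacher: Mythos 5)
Your proof follows essentially the same route as the paper's: the backward implication via the observation that $B_{\delta}=\{j:d_{Y}(T_{j}x,0)\geq\delta\}$ is, for each fixed $\delta$, contained in $A^{c}$ up to a finite set, and the forward implication by assembling $A$ from the level sets $\{j:d_{Y}(T_{j}x,0)<\delta_{k}\}$ along a subsequence $(N_{k})$ realizing the lower $(m_{n})$-densities of the $B_{\delta_{k}}$, which gives $\underline{d}_{m_{n}}(A^{c})\leq c$ and $\lim_{j\in A}T_{j}x=0$. You are in fact more careful than the paper at the delicate point: the paper simply asserts that its construction yields $\underline{d}_{m_{n}}(A^{c})=c$, whereas the construction only produces an upper bound, and when $x$ happens to be $m_{n}$-distributionally near to zero the resulting density is $0$, which is not an admissible value of $d$. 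Your padding step is exactly what is needed to convert the bound $\underline{d}_{m_{n}}(A^{c})\leq c<\gamma$ into an exact value $d\in(0,\gamma)$, and your explicit argument that $A$ must be infinite (a cofinite set has lower $(m_{n})$-density $\gamma$) is also absent from, but consistent with, the paper's sketch.

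The one step that can fail as literally written is the construction of the padding set $R$ with $|R\cap[1,m_{n}]|=\lfloor\beta n\rfloor$ for all large $n$. The inequality $\lfloor m_{n}\rfloor\geq\lfloor\beta n\rfloor$ guarantees there is \emph{room} in $[1,m_{n}]$, but not that the prescription is \emph{realizable}: the counting function $n\mapsto|R\cap[1,m_{n}]|$ can only increase by the number of integers lying in $(m_{n-1},m_{n}]$, and since the $m_{n}$ are merely increasing reals with $\liminf_{n\to\infty}m_{n}/n=\gamma>0$, this increment can be $0$ over stretches on which $\lfloor\beta n\rfloor$ must nevertheless grow (already $m_{n}=n/2$ gives consecutive $n$ with $[1,m_{n}]\cap{\mathbb N}$ unchanged but $\lfloor\beta n\rfloor$ incremented, and a plateau $m_{n}\equiv M$ on an interval $[n_{0},n_{1}]$ with $n_{1}/n_{0}$ large even rules out $|R\cap[1,m_{n}]|/n\to\beta$). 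The repair is to abandon the exact equality: it suffices to choose $R$ so that $|R\cap[1,m_{n}]|\geq\alpha n$ for all large $n$ (some $\alpha>0$) while $\liminf_{k}\bigl(|A^{c}\cap[1,m_{n_{k}}]|+|R\cap[1,m_{n_{k}}]|\bigr)/n_{k}$ stays finite and below $\gamma$ along the subsequence $(n_{k})$ witnessing $\underline{d}_{m_{n}}(A^{c})=0$, and then to set $d:=\underline{d}_{m_{n}}(A^{c}\cup R)$; for the concrete sequences $m_{n}=n^{1/\lambda}$ used throughout the paper the gaps $m_{n}-m_{n-1}$ are eventually $\geq1$ and your original prescription already works. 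Apart from this bookkeeping issue the argument is complete and, on the main construction, tighter than the paper's own.
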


\begin{proof}
Suppose first that there exists a finite number $r>0$ such that
$G_{x,0,m_{n}}(\delta)\leq c$
for $0<\delta<r.$ Let $k\in {\mathbb N}$ and $k>1/r.$ Then there exists a positive integer $n_{k}\in {\mathbb N},$ ar large as we want to be, such that the segment $[1,m_{n_{k}}]$ contains at least $m_{n_{k}}-n_{k}(c+k^{-1})$ integers $j\in {\mathbb N}$ such that 
$d_{Y}(T_{j}x,0)<1/k.$ Let $A_{k}$ denote the collection of such numbers and let $A:=\bigcup_{k\in {\mathbb N},k>1/r}A_{k}.$ Then it can be easily seen that $\underline{d}_{m_{n}}(A^{c})=c$ and $\lim_{j\in A}T_{j}x=0.$ For the converse statement, we can simply prove that the existence of 
a finite number $d \in (0,\liminf_{n\rightarrow \infty}\frac{m_{n}}{n})$ and 
an infinite set $A\subseteq {\mathbb N}$ such that $\underline{d}_{m_{n}}(A^{c})=d$ and $\lim_{j\in A}T_{j}x=0$ implies 
$G_{x,0,m_{n}}(\delta)\leq c\equiv (d+\liminf_{n\rightarrow \infty}\frac{m_{n}}{n})/2$
for $0<\delta<r\equiv (\liminf_{n\rightarrow \infty}\frac{m_{n}}{n}-d)/2.$
\end{proof}

\begin{lem}\label{karaktera}
Suppose that for each $j\in {\mathbb N}$ we have $T_{j} \in L(X,Y)$. Denote by ${\mathcal X}$ the set consisting of all vectors $x\in X$ 
for which there exists an infinite set $A\subseteq {\mathbb N}$ such that $\underline{d}_{m_{n}}(A^{c})=d$ and $\lim_{j\in A}T_{j}x=0.$  Then ${\mathcal X}$ is residual if it is dense.
\end{lem}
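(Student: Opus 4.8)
The plan is to prove that ${\mathcal X}$ is residual by producing a dense $G_\delta$ subset of ${\mathcal X}$, following the Baire-category scheme already used for Proposition \ref{nemoj} and \cite[Proposition 9]{2013JFA}. For each $i\in {\mathbb N}$ I would introduce
$$
M_i:=\Bigl\{ x\in X : (\exists n\in {\mathbb N})\ \bigl|\bigl\{1\le j\le m_n : p_{i}^{Y}(T_j x)<1/i\bigr\}\bigr|\ge m_n-\bigl(d+\tfrac1i\bigr)n\Bigr\},
$$
and show that each $M_i$ is open. Openness is routine: for a fixed $n$ the condition constrains only the finitely many operators $T_1,\dots,T_{\lfloor m_n\rfloor}$, and if $x_0\in M_i$ is witnessed by a ``good'' index set $G$, then each strict inequality $p_{i}^{Y}(T_j x)<1/i$ ($j\in G$) persists on a neighbourhood of $x_0$ by continuity of $x\mapsto p_{i}^{Y}(T_j x)$; the finite intersection of these neighbourhoods lies in $M_i$, and taking the union over $n$ preserves openness.

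Next I would verify $\mathcal X\subseteq M_i$ for every $i$. If $x\in{\mathcal X}$ with witness $A$, then $\lim_{j\in A}T_j x=0$ gives $p_{i}^{Y}(T_j x)<1/i$ for all $j\in A$ past some $j_0$, while $\underline{d}_{m_n}(A^{c})=d$ yields arbitrarily large $n$ with $|A^{c}\cap[1,m_n]|<(d+\tfrac1{2i})n$; choosing such an $n$ also so large that $j_0\le n/(2i)$ shows $[1,m_n]$ contains at least $m_n-(d+\tfrac1i)n$ good indices, so $x\in M_i$. Hence each open set $M_i$ contains the dense set ${\mathcal X}$ and is itself dense, and by the Baire category theorem $\bigcap_i M_i$ is a dense $G_\delta$.

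The heart of the argument, and the step I expect to be the main obstacle, is the reverse inclusion $\bigcap_i M_i\subseteq{\mathcal X}$, where a diagonal construction must \emph{simultaneously} force convergence and control the $m_n$-density of the complement. Given $x\in\bigcap_i M_i$, for each $i$ pick $n_i$ and the good set $G_i:=\{1\le j\le m_{n_i}:p_i^{Y}(T_j x)<1/i\}$ with $|[1,m_{n_i}]\setminus G_i|\le(d+\tfrac1i)n_i$. I would take the $n_i$ strictly increasing and growing so fast that $\sum_{l<i}n_l\le n_i/i$, and set $A:=\bigcup_i\bigl(G_i\cap(m_{n_{i-1}},m_{n_i}]\bigr)$ with $m_{n_0}:=0$. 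Since each $j\in A$ in the $i$-th block satisfies $p_i^{Y}(T_j x)<1/i$ and the $p_m^{Y}$ increase in $m$, one obtains $\lim_{j\in A}T_j x=0$. For the density, the $i$-th block contributes at most $(d+\tfrac1i)n_i$ to $A^{c}\cap[1,m_{n_i}]$, while the fast growth makes the earlier blocks negligible, $\sum_{l<i}(d+\tfrac1l)n_l\le(d+1)n_i/i$; hence $\tfrac1{n_i}|A^{c}\cap[1,m_{n_i}]|\to d$ along $(n_i)$ and $\underline{d}_{m_n}(A^{c})\le d$.

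This already shows that the dense $G_\delta$ set $\bigcap_i M_i$ consists of vectors admitting a witness $A$ with $\underline{d}_{m_n}(A^{c})\le d$ and $\lim_{j\in A}T_j x=0$, which is the essential content; the only remaining bookkeeping, and the genuinely delicate point, is to adjust this to the exact value $d$. Here I would use that $d<\liminf_{n}\frac{m_n}{n}$ leaves a positive reserve of good indices: by discarding extra good indices from $A$ one raises the complement density so as to keep $\tfrac1n|A^{c}\cap[1,m_n]|\ge d-o(1)$ at all large scales while still letting it return to $d$ along $(n_i)$, and discarding indices destroys neither $\lim_{j\in A}T_j x=0$ nor the infinitude of $A$; this yields $\underline{d}_{m_n}(A^{c})=d$, so the thinned $A$ witnesses $x\in{\mathcal X}$. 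Combining the two inclusions, $\bigcap_i M_i$ is a dense $G_\delta$ contained in ${\mathcal X}$, whence ${\mathcal X}$ is residual.
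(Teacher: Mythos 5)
Your Baire-category scheme is exactly the right one, and it is in fact the skeleton of the paper's own proof: the paper introduces open sets $M_{k,m}$ (indexed by a precision $k$ and a seminorm index $m$, which you harmlessly collapse into one index $i$), notes they contain the dense set ${\mathcal X}$, and stops after asserting that $\bigcap_{k,m}M_{k,m}$ is residual \emph{and contains} ${\mathcal X}$ --- which by itself does not yield residuality of ${\mathcal X}$. The reverse inclusion $\bigcap_i M_i\subseteq{\mathcal X}$, which you correctly identify as the heart of the matter and carry out by a block-diagonal construction of the witness set $A$, is precisely what the paper omits (the paper's inequality ``$\geq n(c+k^{-1})$'' also points the wrong way for the containment $M_{k,m}\supseteq{\mathcal X}$; your ``$\ge m_n-(d+\tfrac1i)n$'' is the correct direction). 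So in spirit your proof completes, rather than merely reproduces, the published argument.

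There is, however, one genuine gap in your reverse inclusion: as you define $M_i$, membership is certified by a \emph{single} witness $n$, and nothing forces that witness to grow with $i$, so the step ``pick $n_i$ strictly increasing and fast-growing'' is not available. Concretely, take $m_n:=100n$, $d:=\tfrac1{10}$, $T_j:=0$ for $j\le 100$ and $T_j:=I$ for $j>100$ on $X=Y={\mathbb K}$, and $x:=1$: then $n=1$ witnesses $x\in M_i$ for every $i$ (all $100$ indices $j\le m_1$ satisfy $p_i^Y(T_jx)=0<1/i$), yet $x\notin{\mathcal X}$ because any infinite $A$ with $\lim_{j\in A}T_jx=0$ would have to lie in $[1,100]$. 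The repair is standard and costless: build the requirement $n\ge i$ into the definition of $M_i$. Openness is unaffected, the containment ${\mathcal X}\subseteq M_i$ survives because the $\liminf$ defining $\underline{d}_{m_n}(A^c)=d$ supplies arbitrarily large good scales, and with witnesses forced to infinity your diagonal argument goes through. A second, softer point: your final normalization to the exact value $d$ is only sketched, and as stated it is delicate --- enlarging $A^c$ by a set of $m_n$-density $d$ can push the count along your subsequence up to about $(d+d_0)n_i$ unless the added set is chosen to absorb $A^c\cap[1,m_{n_i}]$. But since the lemma is only ever invoked with $d$ effectively existentially quantified below $\liminf_n\frac{m_n}{n}$ (see the proof of Theorem \ref{reiteracxije}), the bound $\underline{d}_{m_n}(A^c)\le d$ that you do obtain is all that is actually used, so I would not regard this as a defect of substance.
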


\begin{proof}
For $k,\ m\in {\mathbb N} $, set
$$
 M_{k,m}:=\left\{x\in X : (\exists  n\in {\mathbb N})\,  m_{n}-\Bigl|\bigl\{1\leq j\leq m_{n} : p_{m}^{Y}(T_{j}x)<k^{-1}\bigr\}\Bigr| \geq n(c+k^{-1})\right\}.
 $$
Clearly, $M_{k,m}$ is open and dense (since $M_{k,m}\supseteq {\mathcal X}$), so the set
$X_1=\bigcap_{k,m} M_{k,m}$ is residual and contains ${\mathcal X}.$
\end{proof}

Keeping in mind the above lemmas, Remark \ref{pripazise} and the proof of \cite[Theorem 2.3]{bk}, we can deduce the following result:

\begin{thm}\label{reiteracxije}
Suppose $X$ is a Banach space and $T\in L(X).$ Then we have the following:
\begin{itemize}
\item[(i)] $T$ is reiteratively $m_{n}$-distributionally chaotic of type $1$ iff there exists a reiteratively $m_{n}$-distributionally irregular vector $x$ of type $1.$
\item[(ii)] $T$ is reiteratively $m_{n}$-distributionally chaotic of type $1+$ iff there exists a reiteratively $m_{n}$-distributionally irregular vector $x$ of type $1+.$
\end{itemize}
\end{thm}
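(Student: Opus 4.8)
The plan is to prove each equivalence by the familiar two-step scheme: the existence of a reiteratively $m_{n}$-distributionally irregular vector of the relevant type trivially produces an uncountable scrambled set, while the converse is obtained by a Baire category argument that manufactures such a vector out of the density data carried by a single chaotic pair. I would treat (i) and (ii) simultaneously, the only difference being that in the type $1$ case the relevant exceptional set $A$ is allowed to have $\underline{d}_{m_{n}}(A^{c})=d$ for some $d\in(0,\liminf_{n\rightarrow\infty}\frac{m_{n}}{n})$, whereas in the type $1+$ case one forces $d=0$, i.e. genuine $m_{n}$-distributional nearness to zero; Lemma \ref{karakter} is precisely the dictionary translating the condition ``$G_{x,0,m_{n}}(\delta)\le c$ for $0<\delta<r$'' into the existence of such a set $A$ with $\lim_{j\in A}T_{j}x=0$.

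For the easy implication, suppose $x_{0}$ is a reiteratively $m_{n}$-distributionally irregular vector of type $1$ (resp. $1+$) and put $S:=\{\alpha x_{0}:\alpha\in{\mathbb K}\setminus\{0\}\}$, which is uncountable. For distinct $\alpha,\beta$ the vector $z:=(\alpha-\beta)x_{0}$ is nonzero and, by linearity, $d(T^{j}\alpha x_{0},T^{j}\beta x_{0})=|\alpha-\beta|\,\|T^{j}x_{0}\|$; hence $G_{\alpha x_{0},\beta x_{0},m_{n}}(\delta)=G_{x_{0},0,m_{n}}(\delta/|\alpha-\beta|)$, and since $G_{x_{0},0,m_{n}}$ is nonincreasing in $\delta$ the bound $\le c$ (resp. the identity $\equiv0$) is inherited by the pair for all $\delta>0$. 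For the Banach-density condition I would invoke Remark \ref{pripazise}: reiterative $m_{n}$-distributional unboundedness of $x_{0}$ is equivalent to $\sup_{j}\|T^{j}x_{0}\|=+\infty$, which yields a thick set $B$ (so $\underline{Bd}_{l;m_{n}}(B^{c})=0$ by Lemma \ref{mile-duo}(i)) along which $\|T^{j}z\|\to+\infty$; consequently $\{j:\|T^{j}z\|<\sigma\}\subseteq B^{c}\cup F$ for a finite set $F$ and every $\sigma>0$, whence $BF_{\alpha x_{0},\beta x_{0},m_{n}}(\sigma)=0$. Thus every pair in $S$ is a reiteratively distributionally chaotic pair of the required type, and $S$ witnesses the chaos (for the dense versions one replaces $S$ by a dense manifold of such vectors).

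For the converse I would follow the proof of \cite[Theorem 2.3]{bk}. Chaos of type $1$ (resp. $1+$) furnishes an uncountable scrambled set; choosing distinct $x,y$ in it and setting $z:=x-y$, the pair inequalities give $G_{z,0,m_{n}}(\delta)\le c$ for $0<\delta<r$ (resp. $G_{z,0,m_{n}}\equiv0$) together with $BF_{z,0,m_{n}}(\sigma)=0$. The idea is then to exhibit a reiteratively irregular vector by intersecting two residual sets. For the ``lower'' half, Lemma \ref{karakter} places $z$ in the set $\mathcal{X}$ of vectors $w$ admitting an $A$ with $\underline{d}_{m_{n}}(A^{c})=d$ and $\lim_{j\in A}T_{j}w=0$, and Lemma \ref{karaktera} upgrades this to residuality of $\mathcal{X}$ once $\mathcal{X}$ is known to be dense. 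For the ``unbounded'' half, the thickness encoded in $BF_{z,0,m_{n}}(\sigma)=0$ together with the continuity of $T$ is used — exactly as in the proof of Proposition \ref{beton} and in Remark \ref{pripazise} — to produce open dense sets $M_{k}$ whose intersection consists of reiteratively $m_{n}$-distributionally unbounded vectors. The intersection of the two residual sets then contains a reiteratively $m_{n}$-distributionally irregular vector of the desired type, completing the nontrivial implication.

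The step I expect to be the genuine obstacle is the ``unbounded'' half of the converse: a single chaotic pair only guarantees that $\{j:\|T^{j}z\|\ge\sigma\}$ is thick for one threshold $\sigma$, which does not by itself force $\sup_{j}\|T^{j}z\|=+\infty$, so $z$ need not itself be reiteratively unbounded. The resolution, as in \cite{2013JFA} and \cite{bk}, is to pass from the seed $z$ to a residual set of vectors by Baire category, using the continuity of $T$ to convert one large value $\|T^{J}z\|$ into a whole window of large values (since $\|T^{J-i}z\|\ge\|T^{J}z\|/\|T\|^{i}$) and thereby to manufacture genuine unboundedness; one must simultaneously secure the density of $\mathcal{X}$ needed to apply Lemma \ref{karaktera}. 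Once these two residuality statements are in place the passage between type $1$ and type $1+$ requires no new idea beyond the choice $d>0$ versus $d=0$ recorded above, and the theorem follows.
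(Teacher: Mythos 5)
Your outline has the right architecture and correctly isolates the two tools (Lemma \ref{karakter} to translate the density condition, a Baire category argument to pass from one chaotic pair to an irregular vector), and the easy direction via $S=\{\alpha x_{0}\}$ is fine. But in the converse you flag the decisive difficulty --- that a single pair $u=x-y$ need not have an unbounded orbit, and that the density of ${\mathcal X}$ required by Lemma \ref{karaktera} must be ``secured'' --- without supplying the mechanism that resolves it. That mechanism, which is the whole content of the paper's proof (following \cite[Theorem 2.3]{bk}), is the restriction of $T$ to $Y_{1}:=\overline{span}(Orb(u,T))$. Inside $Y_{1}$ both residuality claims become available: (a) $span(Orb(u,T))$ is dense in $Y_{1}$ by construction and is contained in ${\mathcal X}$ (each $T^{k}u$, and hence each finite linear combination, inherits from $u$ the convergence to $0$ along a translate of $A$), so Lemma \ref{karaktera} applies in $Y_{1}$; and (b) $u$ is semi-irregular ($\liminf_{j}\|T^{j}u\|=0$ from $G_{u,0,m_{n}}(\delta)\leq c<\liminf_{n}\frac{m_{n}}{n}$ for all small $\delta$, while $\limsup_{j}\|T^{j}u\|\geq\sigma$ from $BF_{u,0,m_{n}}(\sigma)=0$), so $T|_{Y_{1}}$ cannot be power bounded, and \cite[Corollary 5]{band} (essentially Banach--Steinhaus) makes the set of vectors of $Y_{1}$ with unbounded orbit residual \emph{in $Y_{1}$}. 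Working in all of $X$, as your sketch implicitly does, the set ${\mathcal X}$ has no reason to be dense and the argument collapses.

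A second, smaller divergence: for the ``unbounded half'' you propose to build open dense sets $M_{k}$ whose intersection consists of \emph{reiteratively} $m_{n}$-distributionally unbounded vectors directly from the thickness of the single level set $\{j:\|T^{j}u\|\geq\sigma\}$. The paper does not attempt this (and it is unclear how one would prove density of such $M_{k}$ from one bounded-away-from-zero level set); it instead gets the much weaker property of orbit unboundedness residually via \cite[Corollary 5]{band}, and only then upgrades each unbounded orbit pointwise to reiterative $m_{n}$-distributional unboundedness by the window estimate $\|T^{j}z\|\geq\|T^{j_{k}}z\|/\|T\|^{j_{k}-j}$ of Remark \ref{pripazise} --- this is where your $\|T^{J-i}z\|\geq\|T^{J}z\|/\|T\|^{i}$ observation actually belongs. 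With the restriction to $Y_{1}$ inserted and the unbounded half reorganized in this way, your sketch becomes the paper's proof; the type $1$ versus type $1+$ distinction is indeed just $d>0$ versus $d=0$ (using Proposition \ref{nemoj} in place of Lemma \ref{karaktera} in the latter case), as you say.
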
 

\begin{proof}
We will prove only (i) because the part (ii) can be deduced analogously. 
Due to
[B.], the existence of a reiteratively $m_{n}$-distributionally irregular vector $x$ of type $1$ implies that $T$ is reiteratively $m_{n}$-distributionally chaotic of type $1.$
On the other hand, Lemma \ref{karakter} and the consideration from Remark \ref{pripazise} together imply that a vector $x\in X$ is reiteratively $m_{n}$-distributionally irregular vector of type $1$ for $T$ iff $x$ is a
Li-Yorke irregular vector $x$ for $T$ and there exists an infinite set $A\subseteq {\mathbb N}$ such that $\underline{d}_{m_{n}}(A^{c})=d<\liminf_{n\rightarrow \infty}\frac{m_{n}}{n}$ and $\lim_{j\in A}T_{j}x=0. $ Suppose now that $T$ is reiteratively $m_{n}$-distributional chaotic of type $1$.
Then there exists a pair $(x,y)$ of distinct points such that $BF_{x,y,m_{n}}(\sigma) = 0$
for some $\sigma > 0$ and there exist $c \in (0,\liminf_{n\rightarrow \infty}\frac{m_{n}}{n})$ and $r > 0$ such that $G_{x,y,m_{n}}(\delta)\leq c$ for all $ 0 < \delta < r$.
Let $u=x-y$ and consider
$$
Y_1 = \overline{span}(Orb(u,T)),
$$
which is an infinite dimensional closed $T$-invariant subspace of $X$. Consider the operator $S\in B(Y_1)$ obtained by restricting $T$ to $Y_1$.

Then $S$ is reiteratively $m_{n}$-distributionally chaotic of type $1$ in $Y_1$ because $span\{u\}$ is a corresponding reiteratively $m_{n}$-distributionally chaotic scrambled set of type $1$.
Thus, by \cite[Corollary 5]{band}, $S$ has a residual set of points on $Y_1$ with orbit unbounded. Moreover, by Lemma \ref{karaktera}, $S$ has a residual set of points $z$ on $Y_1$
for which there exists an infinite set $A\subseteq {\mathbb N}$ such that $\underline{d}_{m_{n}}(A^{c})=d<\liminf_{n\rightarrow \infty}\frac{m_{n}}{n}$ and $\lim_{j\in A}T_{j}z=0.$
This yields that $S$ has a residual set of reiteratively $m_{n}$-distributionally irregular vectors of type $1,$ so that 
$T$ has a reiteratively $m_{n}$-distributionally irregular vector of type $1.$
\end{proof}

\section{Dense reiterative $m_{n}$-distributional chaos}\label{marek-MLOss}

In this section, we will see that the method proposed in the proof of \cite[Theorem 15]{2013JFA} provides a safe and sound way for the examination of dense reiterative $m_{n}$-distributional chaos of type $s$ in Fr\' echet spaces.
The first structural result of ours, which in combination with Theorem \ref{ruza} provides an extension of \cite[Theorem 15]{2013JFA} and the second part of \cite[Theorem 3.7]{mendoza}, reads as follows:

\begin{thm}\label{na-dobro}
Suppose that $X$ is separable, $(m_{n}) \in {\mathrm R},$ $(T_{j})_{j\in {\mathbb N}}$ is a sequence in $L(X,Y),$
$X_{0}$ is a dense linear subspace of $X,$ as well as:
\begin{itemize}
\item[(i)] $\lim_{j\rightarrow \infty}T_{j}x=0,$ $x\in X_{0},$
\item[(ii)] there exists an $m_{n}$-distributionally unbounded vector $y\in X$ for $(T_{j})_{j\in {\mathbb N}}.$
\end{itemize}
Then $(T_{j})_{j\in {\mathbb N}}$
is densely $m_{n}$-distributionally chaotic, and moreover, the scrambled set $S$ can be chosen to be a dense uniformly $m_{n}$-distributionally irregular submanifold of $X.$
\end{thm}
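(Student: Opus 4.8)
The plan is to build $S$ as a dense linear submanifold all of whose nonzero vectors are simultaneously $m_{n}$-distributionally near to $0$ and $m_{n}$-distributionally $m$-unbounded for the single index $m$ supplied by hypothesis (ii). By item [A.], such a uniformly $m_{n}$-distributionally irregular manifold is automatically an $m_{n}$-distributionally scrambled set, and since it is dense, hence uncountable (as $X$ is a nontrivial vector space over $\mathbb{R}$ or $\mathbb{C}$), this yields dense $m_{n}$-distributional chaos. It therefore suffices to produce the manifold, and I would follow the inductive scheme of the proof of \cite[Theorem 15]{2013JFA}.

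First I would record the two ingredients in usable form. From hypothesis (ii) and Proposition \ref{beton}(i), applied with $T_{j}$ in place of $T^{j}$, the set of $m_{n}$-distributionally $m$-unbounded vectors is residual in $X$, the index $m$ being fixed once and for all; concretely this produces a number $\epsilon>0$, a null sequence $(y_{k})$ and a strictly increasing sequence $(N_{k})$ realising \eqref{prcko-frcko}, that is, windows $[1,m_{N_{k}}]$ on which $p_{m}^{Y}(T_{j}y_{k})>\epsilon$ holds off a set of at most $N_{k}/k$ indices. From hypothesis (i), every finite linear combination $v$ of elements of $X_{0}$ satisfies $\lim_{j\to\infty}T_{j}v=0$ on all of $\mathbb{N}$, so the near-to-$0$ requirement is automatic for the ``$X_{0}$-part'' of any vector we build; moreover Proposition \ref{nemoj} shows the set of $m_{n}$-distributionally near to $0$ vectors is residual, and Proposition \ref{aaa} that the irregular vectors form a residual set. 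Residuality alone, however, does not give a \emph{linear} scrambled set, which is why the constructive argument is needed.

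Next I would carry out the manifold construction. Using separability, fix a sequence $(x_{k})$ in $X_{0}$ that is dense in $X$. I would inductively define vectors $z_{k}=x_{k}+w_{k}$, where each perturbation $w_{k}$ is a finite linear combination of the null vectors $y_{i}$ with $d(w_{k},0)<2^{-k}$, so that density of $(x_{k})$ forces $\overline{\mathrm{span}}\{z_{k}\}=X$. The heart of the induction is an interlacing of scales: one chooses alternately ``unbounded windows'' $[1,m_{N}]$ on which the freshly activated $y_{i}$'s push $p_{m}^{Y}(T_{j}\cdot)$ above a fixed threshold off a proportion of indices tending to zero, and ``near-zero windows'' on which, thanks to (i), the orbits of all $X_{0}$-summands already present have become uniformly small. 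The coefficients and the activation scales are taken so large that on each unbounded window the newly added term dominates and cannot be cancelled by the earlier ones. Passing to the limit, every nonzero $w=\sum_{k}c_{k}z_{k}$ in $S:=\mathrm{span}\{z_{k}\}$ then inherits, from the block containing the largest activated index with $c_{k}\neq 0$, a set $B$ with $\underline{d}_{m_{n}}(B^{c})=0$ along which $p_{m}^{Y}(T_{j}w)\to\infty$, and from the near-zero windows a set $A$ with $\underline{d}_{m_{n}}(A^{c})=0$ along which $T_{j}w\to 0$; the common index $m$ makes the manifold \emph{uniformly} irregular.

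The step I expect to be the main obstacle is exactly this balancing: guaranteeing that for \emph{every} nontrivial finite combination the $m$-unbounded behaviour survives (no cancellation on the unbounded windows) while the near-to-$0$ behaviour is not destroyed by the accumulated $y_{i}$-perturbations. Here the quantitative bound \eqref{prcko-frcko} and the genuine everywhere convergence $T_{j}x\to 0$ for $x\in X_{0}$ must be combined, with the activation scales $N$ chosen to grow rapidly enough that the estimates $\underline{d}_{m_{n}}(A^{c})=\underline{d}_{m_{n}}(B^{c})=0$ hold uniformly over the span. The hypothesis $(m_{n})\in\mathrm{R}$, i.e. $n\le Lm_{n}$, is what keeps the proportions $N_{k}/m_{N_{k}}$ bounded and lets the window estimates close.
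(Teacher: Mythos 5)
Your overall architecture --- perturb a dense sequence from $X_{0}$ by small ``irregular tails'', interlace windows of largeness and smallness, and let the newest activated term dominate --- is exactly the scheme of the paper's proof, which adapts \cite[Theorem 15]{2013JFA}: the paper builds blocks $x_{k}\in X_{0}$ with $p_{k}(x_{k})\leq 1$ satisfying $p_{Y}^{1}(T_{j}x_{k})\geq k2^{k}$ on all but $j_{k}/k$ indices of $[1,m_{j_{k}}]$ while $p_{Y}^{k}(T_{j}x_{s})<1/k$ there for $s<k$, forms $x_{\beta}=\sum_{q}\beta_{r_{q}}x_{r_{q}}/2^{r_{q}}$ over sparse supports, and then densifies as in \cite{2013JFA}. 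However, as written your construction has a genuine gap at its centre. You take each perturbation $w_{k}$ to be a \emph{finite} linear combination of the vectors $y_{i}$ furnished by Proposition \ref{beton}, and these are large only above the \emph{fixed} threshold $\epsilon$ on their respective windows. By Definition \ref{nizovi-fgh}(ii), $m_{n}$-distributional $m$-unboundedness requires $p_{m}^{Y}(T_{j}w)\rightarrow\infty$ along an infinite set $B$ with $\underline{d}_{m_{n}}(B^{c})=0$; a finite combination of the $y_{i}$ can be large on at most finitely many windows and only above a fixed level, so nothing in your construction forces any nonzero element of $S$ to be $m_{n}$-distributionally unbounded. The fix is the one the paper uses: first upgrade the $(y_{k})$ into blocks with \emph{growing} thresholds $k2^{k}$ (possible because $y_{k}\rightarrow 0$, so one may rescale by large constants and absorb an $X_{0}$-correction), and give each generator $z_{k}$ an \emph{infinite} convergent tail $\sum_{q\in Q_{k}}x_{r_{q}}/2^{r_{q}}$.

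Second, the ``no cancellation'' issue you correctly flag as the main obstacle is not resolved by appealing to ``the block containing the largest activated index with $c_{k}\neq 0$'': once every $z_{k}$ carries infinitely many activation blocks there is no largest one, and it can also happen that $\sum_{k}c_{k}w_{k}=0$ while $\sum_{k}c_{k}z_{k}\neq 0$ unless the supports are controlled. The mechanism that closes this is to take the activation index sets $Q_{k}$ pairwise disjoint and infinite: for any $k_{0}$ with $c_{k_{0}}\neq 0$ one works on the blocks of $Q_{k_{0}}$, where the term coming from $z_{k_{0}}$ contributes at least $r_{k}-1$, the \emph{earlier} blocks of all other generators are small by the window estimate $p_{Y}^{k}(T_{j}x_{s})<1/k$, and the \emph{later} blocks are small by \eqref{grozno} together with $p_{k}(x_{k})\leq 1$ and the growth condition \eqref{miruga}; the union of the good parts of these windows then has $\underline{d}_{m_{n}}$-null complement along the subsequence $n=j_{r_{k}}$, $k\in Q_{k_{0}}$. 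Without these two ingredients the proposal does not yield even a single $m_{n}$-distributionally irregular vector, let alone a dense uniformly irregular manifold.
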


\begin{proof}
We will only outline the main details.
Without loss of generality, we may assume that 
\begin{align}\label{grozno}
p_{Y}^{m}(T_{j}x)\leq p_{j+m}(x)\ \ \mbox{for all }\ \ x\in X\ \ \mbox{ and }\ \ j,\ m\in {\mathbb N}
\end{align} 
as well as that a set $B\subseteq {\mathbb N}$ satisfies $\underline{d}_{m_{n}}(B^{c})=0$
and
$
\lim_{n\rightarrow \infty,n\in B}p^{1}_{Y}\bigl( T_{n}y \bigr)=+\infty.
$
Using the equality $\underline{d}_{m_{n}}(B^{c})=0$, we can construct a sequence $(x_{k})_{k\in {\mathbb N}}$ in $X_{0}$ and a strictly increasing sequence $(j_{k})_{k\in {\mathbb N}}$ of positive integers such that, for every $k\in {\mathbb N},$ one has: $p_{k}(x_{k})\leq 1,$ 
\begin{align*}
\Bigl| \bigl\{ 1  \leq j\leq m_{j_{k}} : p_{Y}^{1}(T_{j}x_{k})\geq k2^{k} \bigr\} \Bigr| \geq m_{j_{k}}-\frac{j_{k}}{k}
\end{align*}
and
\begin{align*}
\Bigl| \bigl\{1\leq j \leq m_{j_{k}} : p_{Y}^{k}(T_{j}x_{s})<1/k \bigr\}\Bigr|\geq m_{j_{k}}-\frac{j_{k}}{k},\quad s=1,\cdot \cdot \cdot,k-1.
\end{align*}
Take any strictly increasing sequence $(r_{q})_{q\in {\mathbb N}}$  in ${\mathbb N} \setminus \{1\}$
such that 
\begin{align}\label{miruga}
r_{q+1}\geq 1+r_{q}+m_{j_{r_{q}+1}}\mbox{ for all }q\in {\mathbb N}.
\end{align} 
Let $\alpha \in \{0,1\}^{\mathbb N}$ be a sequence defined by $\alpha_{n}=1$ iff $n=r_{q}$ for some $q\in {\mathbb N}.$
Further on, let $\beta \in \{0,1\}^{\mathbb N}$ 
contains an infinite number of $1'$s and let $\beta_{q}\leq \alpha_{q}$ for all $q\in {\mathbb N}.$ If $\beta_{r_{k}}=1$ for some $k\in {\mathbb N}$ and $x_{\beta}=\sum_{q=1}^{\infty}\beta_{r_{q}}x_{r_{q}}/2^{r_{q}},$ then for each $j\in [1,m_{j_{r_{k}}}]$ 
such that  $p_{Y}^{1}(T_{j}x_{k})\geq k2^{k}$
and $p_{Y}^{r_{k}}(T_{j}x_{s})<1/r_{k}$ for $s<r_{k},$
we have: $1+j\leq 1+m_{j_{r_{k}}} \leq 1+m_{j_{r_{q-1}}}\leq r_{q}$ for $q>k$): 
\begin{align*}
p_{Y}^{1}\bigl( T_{j}x_{\beta}\bigr) & \geq r_{k}-\sum_{q<k}\frac{p_{Y}^{1}(T_{j}x_{r_{q}})}{2^{r_{q}}} -\sum_{q>k}\frac{p_{Y}^{1}(T_{j}x_{r_{q}})}{2^{r_{q}}}
\\ & \geq r_{k}-\sum_{q<k}\frac{p_{Y}^{1}(T_{j}x_{r_{q}})}{2^{r_{q}}}-\sum_{q>k}\frac{p_{1+j}(x_{r_{q}})}{2^{r_{q}}}
\\ & \geq r_{k}-\sum_{q<k}\frac{1}{2^{r_{q}}r_{q}} -\sum_{q> k}\frac{1}{2^{r_{q}}} \geq r_{k}-1,
\end{align*}
which implies that for each fixed distinct numbers $\alpha_{1},\ \alpha_{2}\in {\mathbb K}$ there exists a positive integer $k_{0}(\alpha_{1},\alpha_{2})$ such that for each $k\geq k_{0}(\alpha_{1},\alpha_{2})$ one has:
\begin{align*}
d_{Y}\bigl(T_{k}\alpha_{1}x_{\beta},T_{k}\alpha_{1}x_{\beta}\bigr)\geq 2^{-1}\frac{|\alpha_{1}-\alpha_{2}|(r_{k}-1)}{1+|\alpha_{1}-\alpha_{2}|(r_{k}-1)}\geq 4^{-1};
\end{align*}
hence,
\begin{align}\label{lepo1}
\lim_{k\rightarrow \infty}\frac{\Bigl|\bigl\{ 1  \leq j\leq m_{j_{r_{k}}} : d_{Y}\bigl(T_{k}\alpha_{1}x_{\beta},T_{k}\alpha_{2}x_{\beta}\bigr)<4^{-1} \bigr\}\Bigr|}{j_{r_{k}}}=0.
\end{align}
Furthermore, if $j\in [1,m_{j_{r_{k}+1}}]$ and $p_{Y}^{r_{k}+1}(T_{j}x_{s})<1/(r_{k}+1)$ for $s<r_{k}+1,$
then we have $1+r_{k}+j\leq 1+r_{k}+m_{j_{r_{k}+1}}
\leq 1+r_{q-1}+m_{j_{r_{q-1}+1}}
\leq r_{q}$ due to \eqref{miruga} and therefore
\begin{align*}
p_{Y}^{r_{k}+1}\bigl( T_{j}x_{\beta}\bigr) & \leq \sum_{q\leq k}\frac{p_{Y}^{r_{k}+1}(T_{j}x_{r_{q}})}{2^{r_{q}}} +\sum_{q>k}\frac{p_{Y}^{r_{k}+1}(T_{j}x_{r_{q}})}{2^{r_{q}}}
\\ & \leq \sum_{q\leq k}\frac{1}{2^{r_{q}}(r_{k}+1)} +\sum_{q>k}\frac{p_{1+j+r_{k}}(x_{r_{q}})}{2^{r_{q}}}
\\ & \leq \frac{1}{2(r_{k}+1)}+\sum_{q>k}\frac{1}{2^{r_{q}}} \leq \frac{1}{r_{k}+1},
\end{align*}
which clearly implies
\begin{align}\label{metrisa}
d_{Y}\bigl( T_{j}x_{\beta},0\bigr)=
\sum
\limits_{q=1}^{r_{k}+1}\frac{1}{2^{q}}\frac{p_{q}(T_{j}x_{\beta})}{1+p_{q}(T_{j}x_{\beta})}+\sum
\limits_{q=r_{k}+1}^{\infty}\frac{1}{2^{q}}\frac{p_{q}(T_{j}x_{\beta})}{1+p_{q}(T_{j}x_{\beta})}\leq \frac{1}{r_{k}+2}+\frac{1}{2^{r_{k}}}.
\end{align}
This yields that for each $\epsilon>0$ and for each pair of distinct numbers $\alpha_{1},\ \alpha_{2}\in {\mathbb K}$ we have:
\begin{align}\label{lepo2}
\lim_{k\rightarrow \infty}\frac{\Bigl|\bigl\{ 1  \leq j\leq m_{j_{r_{k}}} : d_{Y}\bigl(T_{k}\alpha_{1}x_{\beta},T_{k}\alpha_{2}x_{\beta}\bigr)\geq \epsilon \bigr\}\Bigr|}{j_{r_{k}}}=0.
\end{align}
By \eqref{lepo1}-\eqref{lepo2}, we get that the sequence $(T_{j})_{j\in {\mathbb N}}$ is $m_{n}$-distributionally chaotic, with $S=span\{x_{\beta}\}$ as a $1/4$-scrambled set. The final statement of theorem now follows similarly as in the proof of 
\cite[Theorem 15]{2013JFA}.
\end{proof}

The following corollaries are immediate:

\begin{cor}\label{na-dobrorade}
Suppose that $X$ is separable, $\lambda \in (0,1],$ $(T_{j})_{j\in {\mathbb N}}$ is a sequence in $L(X,Y),$
$X_{0}$ is a dense linear subspace of $X,$ as well as:
\begin{itemize}
\item[(i)] $\lim_{j\rightarrow \infty}T_{j}x=0,$ $x\in X_{0},$
\item[(ii)] there exists a $\lambda$-distributionally unbounded vector $y\in X$ for $(T_{j})_{j\in {\mathbb N}}.$
\end{itemize}
Then $(T_{j})_{j\in {\mathbb N}}$
is densely $\lambda$-distributionally chaotic, and moreover, the scrambled set $S$ can be chosen to be a dense uniformly $\lambda$-distributionally irregular submanifold of $X.$
\end{cor}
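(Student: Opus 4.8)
The plan is to obtain this statement as a direct specialization of Theorem \ref{na-dobro} to the sequence $m_{n}\equiv n^{1/\lambda}$. By the conventions fixed earlier in the section, $\lambda$-distributional chaos (respectively, a $\lambda$-distributionally unbounded vector, a uniformly $\lambda$-distributionally irregular submanifold) is by definition nothing but $m_{n}$-distributional chaos (respectively, an $m_{n}$-distributionally unbounded vector, a uniformly $m_{n}$-distributionally irregular submanifold) for this particular choice of $(m_{n})$. Consequently no fresh construction is required; the only thing I must confirm before invoking the theorem is that $(n^{1/\lambda})$ actually belongs to the class ${\mathrm R}$.

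First I would check the membership $(n^{1/\lambda})\in {\mathrm R}$. The map $n\mapsto n^{1/\lambda}$ takes values in $[1,\infty)$ and is strictly increasing, and since $\lambda\in(0,1]$ forces $1/\lambda\geq 1$, one has $\frac{n^{1/\lambda}}{n}=n^{(1/\lambda)-1}\geq 1$ for every $n\in{\mathbb N}$; hence $\liminf_{n\rightarrow\infty}\frac{n^{1/\lambda}}{n}\geq 1>0$, which is precisely the defining condition for membership in ${\mathrm R}$. This is the elementary but essential point that licenses the application of Theorem \ref{na-dobro}.

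Next I would match the hypotheses. Condition (i) of the corollary coincides verbatim with condition (i) of Theorem \ref{na-dobro}. Condition (ii) asserts the existence of a $\lambda$-distributionally unbounded vector $y\in X$ for $(T_{j})_{j\in{\mathbb N}}$, which, under the identification $m_{n}\equiv n^{1/\lambda}$, is exactly the existence of an $m_{n}$-distributionally unbounded vector demanded in condition (ii) of the theorem. With $X$ separable, $(T_{j})_{j\in{\mathbb N}}$ a sequence in $L(X,Y)$ and $X_{0}$ a dense linear subspace, all the assumptions of Theorem \ref{na-dobro} are thereby in force.

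Applying Theorem \ref{na-dobro} then yields that $(T_{j})_{j\in{\mathbb N}}$ is densely $m_{n}$-distributionally chaotic with a scrambled set that can be taken to be a dense uniformly $m_{n}$-distributionally irregular submanifold of $X$; rereading these conclusions through the convention $m_{n}\equiv n^{1/\lambda}$ gives dense $\lambda$-distributional chaos together with a dense uniformly $\lambda$-distributionally irregular submanifold, as claimed. Since the entire substance of the result is carried by Theorem \ref{na-dobro}, there is no genuine obstacle here: the only step deserving any care is the verification that $(n^{1/\lambda})\in{\mathrm R}$, which I expect to be routine.
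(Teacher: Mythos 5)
Your proposal is correct and matches the paper exactly: the paper states this corollary as an immediate specialization of Theorem \ref{na-dobro} to $m_{n}\equiv n^{1/\lambda}$, offering no separate proof. Your verification that $(n^{1/\lambda})\in{\mathrm R}$ (via $n^{(1/\lambda)-1}\geq 1$ for $\lambda\in(0,1]$) and the identification of the $\lambda$-notions with the corresponding $m_{n}$-notions are precisely the routine checks the author leaves implicit.
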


\begin{cor}\label{na-dobrwo}
Suppose that $X$ is separable, $(T_{j})_{j\in {\mathbb N}}$ is a sequence in $L(X,Y),$
$X_{0}$ is a dense linear subspace of $X,$ as well as:
\begin{itemize}
\item[(i)] $\lim_{j\rightarrow \infty}T_{j}x=0,$ $x\in X_{0},$
\item[(ii)] there exist $y\in X$ and $m\in {\mathbb N}$ such that $\lim_{j \rightarrow \infty}p_{m}^{Y}(T_{j}y)=+\infty.$
\end{itemize}
Then $(T_{j})_{j\in {\mathbb N}}$
is densely $m_{n}$-distributionally chaotic for each sequence $(m_{n}) \in {\mathrm R},$ and moreover, the corresponding scrambled set $S$ can be chosen to be a dense uniformly $m_{n}$-distributionally irregular submanifold of $X.$ 
\end{cor}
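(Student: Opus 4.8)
The plan is to deduce Corollary \ref{na-dobrwo} directly from Theorem \ref{na-dobro} by observing that hypothesis (ii) of the corollary is nothing but a stronger, $(m_{n})$-\emph{independent} version of hypothesis (ii) of the theorem. First I would fix an arbitrary sequence $(m_{n}) \in {\mathrm R}$ and unwind the definition of an $m_{n}$-distributionally unbounded vector (Definition \ref{nizovi-fgh}(ii)): what is needed is a set $B \subseteq {\mathbb N}$ with $\underline{d}_{m_{n}}(B^{c}) = 0$ along which $p_{m}^{Y}(T_{j}y) \rightarrow \infty$ for the fixed index $m.$

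The key observation is that the hypothesis $\lim_{j\rightarrow \infty} p_{m}^{Y}(T_{j}y) = +\infty$ permits the choice $B = {\mathbb N}.$ Then $B^{c} = \emptyset,$ and since $|\emptyset \cap [1,m_{n}]| = 0$ one has $\underline{d}_{m_{n}}(\emptyset) = \liminf_{n\rightarrow \infty}\frac{0}{n} = 0$ for \emph{every} admissible sequence $(m_{n}) \in {\mathrm R};$ moreover $\lim_{j\in B, j\rightarrow \infty} p_{m}^{Y}(T_{j}y) = \lim_{j\rightarrow \infty} p_{m}^{Y}(T_{j}y) = +\infty.$ Hence $y$ is $m_{n}$-distributionally $m$-unbounded, and in particular $m_{n}$-distributionally unbounded, for the fixed $(m_{n});$ and, crucially, this conclusion holds verbatim for every $(m_{n}) \in {\mathrm R},$ precisely because the empty set has vanishing lower $(m_{n})$-density no matter which sequence from ${\mathrm R}$ is used.

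With hypothesis (ii) of Theorem \ref{na-dobro} thus verified for the fixed $(m_{n})$ --- the separability of $X,$ the density of $X_{0},$ and condition (i) being literally identical in the two statements --- I would simply invoke Theorem \ref{na-dobro} to conclude that $(T_{j})_{j\in {\mathbb N}}$ is densely $m_{n}$-distributionally chaotic and that the scrambled set $S$ may be chosen to be a dense uniformly $m_{n}$-distributionally irregular submanifold of $X.$ Since the sequence $(m_{n}) \in {\mathrm R}$ was arbitrary, the assertion follows for each such sequence.

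I do not expect any genuine obstacle here: the entire analytic substance --- the inductive construction of the vectors $(x_{k}),$ the selection of the lacunary sequence $(r_{q})$ obeying \eqref{miruga}, and the two limiting estimates \eqref{lepo1}--\eqref{lepo2} --- is already carried out in the proof of Theorem \ref{na-dobro}. The only point requiring (entirely routine) attention is that the density bound $\underline{d}_{m_{n}}(\emptyset)=0$ is uniform over $(m_{n}) \in {\mathrm R},$ which is exactly what upgrades the single-sequence conclusion of the theorem to the ``for each $(m_{n}) \in {\mathrm R}$'' statement of the corollary.
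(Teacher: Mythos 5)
Your proposal is correct and is exactly the deduction the paper intends: Corollary \ref{na-dobrwo} is stated as an immediate consequence of Theorem \ref{na-dobro}, and your observation that $B={\mathbb N}$ gives $\underline{d}_{m_{n}}(B^{c})=\underline{d}_{m_{n}}(\emptyset)=0$ uniformly over all $(m_{n})\in {\mathrm R}$ is precisely the point that makes hypothesis (ii) of the corollary imply hypothesis (ii) of the theorem for every such sequence.
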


\begin{rem}\label{peris}
It is worth noting that Corollary \ref{na-dobrorade} provides a generalization of \cite[Theorem 16]{2013JFA} for sequences of operators, where the case $\lambda=1$ has been considered. On the other hand, Corollary \ref{na-dobrwo} provides a generalization of \cite[Corollary 17]{2013JFA} for sequences of operators; in this statement, the authors have shown that the Godefroy-Schapiro criterion (see \cite{godefroy} and \cite[Theorem 3.1]{erdper}) implies dense distributional chaos. 
\end{rem}

Suppose now that $T_{j} : D(T_{j})\subseteq X \rightarrow X$ is a linear mapping, $C\in L(X)$ is an injective
mapping with dense range, as well as
\begin{equation}\label{cea}
R(C)\subseteq D(T_{j})\mbox{ and }T_{j}C\in L(X)\mbox{ for all }j\in {\mathbb N}.
\end{equation}
Then (\ref{cea}) implies that,
for every $j\in {\mathbb N},$ the mapping $T_{j}' : R(C) \rightarrow X$ defined by
$T_{j}'(Cx):=T_{j}Cx,$ $x\in X,$ $j\in {\mathbb N}$ is an element of the space $L([R(C)],X)$. By Theorem \ref{na-dobro},
we immediately obtain the following corollary.

\begin{cor}\label{cea1}
Let $(m_{n}) \in {\mathrm R},$ let $\lambda \in (0,1],$
and let the above conditions hold.
\begin{itemize}
\item[(i)] Suppose that $X$ is separable, $X_{0}$ is a dense linear subspace of
$X,$ as well as:
\begin{itemize}
\item[(a)] $\lim_{j\rightarrow \infty}T_{j}Cx=0,$ $x\in X_{0},$
\item[(b)] there exist $x\in X,$ $m\in {\mathbb N}$ and a set $B\subseteq {\mathbb N}$ such that
$\underline{d}_{m_{n}}(B^{c})=0$ and $\lim_{j\rightarrow \infty ,j\in B}p_{m}(T_{j}Cx)=\infty,$ resp.
$\lim_{j\rightarrow \infty ,j\in B}\|T_{j}Cx\|=\infty$ if $X$ is a Banach space.
\end{itemize}
Then the sequence $(T_{j})_{j\in {\mathbb N}}$
is densely $m_{n}$-distributionally chaotic for each sequence $(m_{n}) \in {\mathrm R},$ and moreover, the scrambled set $S$ can be chosen to be a dense uniformly $m_{n}$-distributionally irregular submanifold of $X.$
\item[(ii)]
Suppose that $X$ is separable, $X_{0}$ is a dense linear subspace of
$X,$ as well as:
\begin{itemize}
\item[(a)] $\lim_{j\rightarrow \infty}T_{j}Cx=0,$ $x\in X_{0},$
\item[(b)] there exist $x\in X,$ $m\in {\mathbb N}$ and a set $B\subseteq {\mathbb N}$ such that
$\underline{d}_{1/\lambda}(B^{c})=0$ and $\lim_{j\rightarrow \infty ,j\in B}p_{m}(T_{j}Cx)=\infty,$ resp.
$\lim_{j\rightarrow \infty ,j\in B}\|T_{j}Cx\|=\infty$ if $X$ is a Banach space.
\end{itemize}
Then the sequence $(T_{j})_{j\in {\mathbb N}}$
is densely $\lambda$-distributionally chaotic, and moreover, the scrambled set $S$ can be chosen to be a dense uniformly $\lambda$-distributionally irregular submanifold of $X.$
\item[(iii)] Suppose that $X$ is separable, $X_{0}$ is a dense linear subspace of
$X,$ as well as:
\begin{itemize}
\item[(a)] $\lim_{j\rightarrow \infty}T_{j}Cx=0,$ $x\in X_{0},$
\item[(b)] there exist $x\in X$ and $m\in {\mathbb N}$ such that
$\lim_{j\rightarrow \infty }p_{m}(T_{j}Cx)=+\infty$.
\end{itemize}
Then the sequence $(T_{j})_{j\in {\mathbb N}}$
is densely $m_{n}'$-distributionally chaotic for each sequence $(m_{n}') \in {\mathrm R},$ and moreover, the corresponding scrambled set $S$ can be chosen to be a uniformly $m_{n}'$-distributionally irregular submanifold of $X.$
\end{itemize}
\end{cor}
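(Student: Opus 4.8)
The plan is to transfer the whole problem to the Fr\'echet space $[R(C)]$ and to apply Theorem \ref{na-dobro} (for part (i)), Corollary \ref{na-dobrorade} (for part (ii)) and Corollary \ref{na-dobrwo} (for part (iii)) to the sequence $(T_{j}')_{j\in{\mathbb N}}$ in $L([R(C)],X)$ introduced just before the statement. The crucial preliminary observation is that $C : X \to [R(C)]$ is an isomorphism of Fr\'echet spaces: since $p_{n}^{C}(Cx)=p_{n}(C^{-1}Cx)=p_{n}(x)$ for all $n\in{\mathbb N}$ and $x\in X$, the map $C$ preserves every seminorm and is a linear bijection onto $R(C)$. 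Consequently $[R(C)]$ is separable whenever $X$ is, and $C(X_{0})$ is a dense linear subspace of $[R(C)]$.

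First I would verify the hypotheses of Theorem \ref{na-dobro} for $(T_{j}')$ with the dense subspace $C(X_{0})$. Because $T_{j}'(Cx)=T_{j}Cx$, assumption (a) says exactly that $\lim_{j\rightarrow\infty}T_{j}'(Cx)=0$ for every $Cx\in C(X_{0})$, which is hypothesis (i) of the theorem. For hypothesis (ii), assumption (b) supplies $x\in X$, $m\in{\mathbb N}$ and $B\subseteq{\mathbb N}$ with $\underline{d}_{m_{n}}(B^{c})=0$ and $\lim_{j\in B}p_{m}(T_{j}'(Cx))=\lim_{j\in B}p_{m}(T_{j}Cx)=\infty$, so that $Cx$ is an $m_{n}$-distributionally $m$-unbounded, hence $m_{n}$-distributionally unbounded, vector for $(T_{j}')$. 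Theorem \ref{na-dobro} then produces a dense uniformly $m_{n}$-distributionally irregular submanifold $S'$ of $[R(C)]$ for $(T_{j}')$. For (ii) one takes $m_{n}\equiv n^{1/\lambda}$ and invokes Corollary \ref{na-dobrorade}; for (iii) assumption (b) gives $\lim_{j\rightarrow\infty}p_{m}(T_{j}'(Cx))=\infty$ outright, so Corollary \ref{na-dobrwo} applies and delivers chaos for every $(m_{n}')\in{\mathrm R}$.

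It then remains to pull $S'$ back to $X$. Every element of $S'$ is a limit, in the complete space $[R(C)]$, of elements of the dense subspace $C(X_{0})\subseteq R(C)$, so $S'\subseteq R(C)\subseteq\bigcap_{j}D(T_{j})$ by \eqref{cea}; moreover $T_{j}'u=T_{j}u$ for every $u\in R(C)$. Hence, for distinct $u,v\in S'$, one has $d_{X}(T_{j}'u,T_{j}'v)=d_{X}(T_{j}u,T_{j}v)$, so each nonzero $u\in S'$ is an $m_{n}$-distributionally irregular vector for $(T_{j})$ with the same uniform index $m$, and every pair of distinct points of $S'$ is a distributionally chaotic pair for $(T_{j})$. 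Thus $S:=S'$ is a uniformly $m_{n}$-distributionally irregular submanifold for $(T_{j})$. Finally, density transfers: $S'$ is dense in $[R(C)]$, whose topology is finer than the subspace topology inherited from $X$, so $S'$ is dense in $R(C)$, and since $R(C)$ is dense in $X$ we conclude $\overline{S'}^{\,X}=X$.

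I expect the only point requiring genuine care (there is no hard estimate here, which is precisely why the corollary is ``immediate'') to be the simultaneous bookkeeping of the two comparable topologies on $R(C)$: one must work in the finer $[R(C)]$-topology in order to apply Theorem \ref{na-dobro} and to obtain density there, yet read off the distributional conditions and the final density claim in the ambient $X$-topology, using the identity $T_{j}'|_{R(C)}=T_{j}|_{R(C)}$ together with the density of $R(C)$ in $X$.
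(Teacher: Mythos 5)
Your proposal is correct and follows exactly the route the paper intends: the paper derives Corollary \ref{cea1} directly from Theorem \ref{na-dobro} (and Corollaries \ref{na-dobrorade}, \ref{na-dobrwo}) applied to the sequence $(T_{j}')_{j\in{\mathbb N}}\subseteq L([R(C)],X)$, and you have merely made explicit the transfer details (the seminorm identity $p_{n}^{C}(Cx)=p_{n}(x)$, the density of $C(X_{0})$ in $[R(C)]$, and the passage from density in $[R(C)]$ to density in $X$ via continuity of the inclusion and density of $R(C)$) that the paper leaves as ``immediate.''
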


In \cite[Example 3.8-Example 3.10, Corollary 3.12]{mendoza}, we have considered only orbits of linear operators; it is clear that Corollary \ref{cea1}(ii) provides an extension of the second part in \cite[Corollary 3.12]{mendoza}, where the case $\lambda=1$ has been analyzed, as well as that Corollary \ref{cea1}(iii) can be used to further improve the conclusions obtained in \cite[Example 3.8-Example 3.10]{mendoza} for certain classes of unbounded differential operators in Banach spaces:

\begin{example}\label{idiote}
(see \cite[Example 3.9]{mendoza} and references cited therein) Denote by ${\mathcal F}$ and ${\mathcal F}^{-1}$ the Fourier transform on the real line and its inverse transform, respectively. 
Assume that $X:=L^{2}({\mathbb R}),$ $c>b/2>0,$ $\Omega:=\{ \lambda \in
{\mathbb C} : \Re \lambda<c-b/2\}$ and ${\mathcal A}_{c}u:=u^{\prime
\prime}+2bxu^{\prime}+cu$ is the bounded perturbation of the
one-dimensional Ornstein-Uhlenbeck operator acting with domain
$D({\mathcal A}_{c}):=\{u\in L^{2}({\mathbb R})\ \cap \
W^{2,2}_{loc}({\mathbb R}) : {\mathcal A}_{c}u\in L^{2}({\mathbb
R})\}.$ 
Then it is well known that ${\mathcal A}_{c}$ generates a strongly continuous semigroup,
$\Omega \subseteq \sigma_{p}({\mathcal A}_{c}),$ and for any open
connected subset $\Omega'$ of $\Omega$ which admits a cluster point
in $\Omega,$ one has $E=\overline{span\{g_{i}(\lambda) : \lambda \in
\Omega',\ i=1,2 \}},$ where $g_{1} : \Omega \rightarrow X$ and
$g_{2} :  \Omega \rightarrow X$ are defined by
$g_{1}(\lambda):={\mathcal F}^{-1}(e^{-\frac{\xi^{2}}{2b}}\xi
|\xi|^{-(2+\frac{\lambda-c}{b})})(\cdot),$ $\lambda \in \Omega$ and
$g_{2}(\lambda):={\mathcal F}^{-1}(e^{-\frac{\xi^{2}}{2b}}
|\xi|^{-(1+\frac{\lambda-c}{b})})(\cdot),$ $\lambda \in \Omega.$ Assume that $(P_{j}(z))_{j\in {\mathbb N}}$ is a sequence of non-zero complex polynomials such that there exists an open connected subset $\Omega'$ of $\Omega$ such that $\lim_{j\rightarrow \infty}P_{j}(\lambda)=0,$ $\lambda \in \Omega'$ as well as that there exists a number $\lambda \in \Omega$ such that $|P_{j}(\lambda)|>1.$
Due to Corollary \ref{cea1}(ii), we get that the sequence of operators $(P_{j}({\mathcal A}_{c}))_{j\in {\mathbb N}}$ is densely $m_{n}$-distributionally chaotic for each sequence $(m_{n}) \in {\mathrm R}.$ 
\end{example}

Assume, for the time being, that $X$ is a  Fr\' echet sequence space in which $(e_{n})_{n\in {\mathbb N}}$ is a basis and $(\omega_{n})_{n\in {\mathbb N}}$ is a sequence of positive weights; for more details, see \cite[Section 4.1]{erdper}. Consider
the unilateral weighted backward shift $T_{\omega} : D(T_{\omega}) \subseteq X \rightarrow X,$
given by 
\begin{align}\label{oladilo}
T_{\omega}& \bigl\langle x_{n}\bigr\rangle_{n\in {\mathbb N}}:=\bigl\langle w_{n}x_{n+1}\bigr\rangle_{n\in {\mathbb N}},\quad \bigl\langle x_{n}\bigr\rangle_{n\in {\mathbb N}}\in X,
\end{align}
about which we assume that it is not necessarily continuous. Albeit it is without scope of this paper to consider $m_{n}$-distributionally chaotic properties of unbounded bilateral weighted shift operators (see \cite{ddc}, where we have recently initiated the study of disjoint distributionally chaotic properties of  such operators), we will state here only one result regarding this question, which can be deduced with the help of Corollary \ref{cea1}(iii) and the proof of \cite[Theorem 4.11]{ddc}:

\begin{thm}\label{rikardinjo}
Let $X:=l^{p}$ for some $1\leq p<\infty$ or $X:=c_{0},$ and let $(m_{n}) \in {\mathrm R}.$
Suppose that there exists a bounded sequence $(a_{n})_{n\in {\mathbb N}}$ of positive reals such that for each $k\in {\mathbb N}$ we have
$$
B_{k}:=\sup_{n\in {\mathbb N}}\Biggl[a_{k+n}\prod_{i=n}^{k+n-1}\omega_{i}\Biggr]<\infty.
$$ 
Then the following holds:
\begin{itemize}
\item[(i)] If $p\neq 2$ and $X=l^{p}$ or $X=c_{0},$ as well as $\sum \limits_{k=1}^{\infty}\frac{1}{B_{k}}<\infty ,$
\end{itemize}
or
\begin{itemize}
\item[(ii)] $X=l^{2}$ and $\sum \limits_{k=1}^{\infty}\frac{1}{B_{k}^{2}}<\infty ,$
\end{itemize}
then the operator $T_{\omega}$ is densely $m_{n}$-distributionally chaotic.
\end{thm}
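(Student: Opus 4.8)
The plan is to verify the hypotheses of Corollary \ref{cea1}(iii) with $T_{j}\equiv T_{\omega}^{j}$ and with $C\in L(X)$ chosen to be the diagonal operator $C\langle x_{n}\rangle_{n\in{\mathbb N}}:=\langle a_{n}x_{n}\rangle_{n\in{\mathbb N}}.$ Since $(a_{n})$ is bounded, $C$ is a well-defined element of $L(X);$ it is injective because $a_{n}>0,$ and its range is dense because it contains $\mathrm{span}\{e_{n}:n\in{\mathbb N}\}.$ A direct computation gives $(T_{\omega}^{j}Cx)_{m}=\bigl(\prod_{i=m}^{m+j-1}\omega_{i}\bigr)a_{m+j}x_{m+j},$ so that
\begin{align*}
\bigl\|T_{\omega}^{j}Cx\bigr\|\leq\Bigl(\sup_{m\in{\mathbb N}}a_{m+j}\prod_{i=m}^{m+j-1}\omega_{i}\Bigr)\|x\|=B_{j}\|x\|,\quad x\in X,\ j\in{\mathbb N}.
\end{align*}
As $B_{j}<\infty,$ this shows $R(C)\subseteq D(T_{\omega}^{j})$ and $T_{\omega}^{j}C\in L(X)$ for every $j,$ i.e. \eqref{cea} holds, so the setting of Corollary \ref{cea1} is available; recall that $X$ is separable.

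Condition (a) of Corollary \ref{cea1}(iii) is immediate with $X_{0}:=\mathrm{span}\{e_{n}:n\in{\mathbb N}\}$ (dense in $X$): for $x=e_{n}$ we have $T_{\omega}^{j}Ce_{n}=0$ whenever $j\geq n,$ hence $\lim_{j\to\infty}T_{\omega}^{j}Cx=0$ for every $x\in X_{0}$ by linearity. This simultaneously takes care of the distributionally-near-to-zero requirement underlying Theorem \ref{na-dobro}.

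The core of the argument is condition (b): producing a single $x\in X$ with $\lim_{j\to\infty}\|T_{\omega}^{j}Cx\|=+\infty$ (the full limit, not a subsequential one). For each $j$ I would fix $k_{j}\in{\mathbb N}$ with $a_{k_{j}+j}\prod_{i=k_{j}}^{k_{j}+j-1}\omega_{i}\geq B_{j}/2$ and set $N_{j}:=k_{j}+j;$ isolating the single coordinate $k_{j}$ of $T_{\omega}^{j}Cx$ gives $\|T_{\omega}^{j}Cx\|\geq (B_{j}/2)\,|x_{N_{j}}|.$ The summability hypothesis forces $B_{j}\to\infty.$ In the $l^{p}$ cases one has $\sum_{k}1/B_{k}^{p}<\infty$ (directly when $p=2,$ and via $1/B_{k}^{p}\leq 1/B_{k}$ for large $k$ when $p\neq2$), so by the standard fact that a convergent nonnegative series can be multiplied by a sequence tending to $+\infty$ and stay summable, I may choose $\theta_{j}\to+\infty$ with $\sum_{j}(\theta_{j}/B_{j})^{p}<\infty;$ in the $c_{0}$ case I simply take $\theta_{j}:=\sqrt{B_{j}}\to+\infty.$ I then define $x_{n}:=\max\{\,2\theta_{j}/B_{j}:N_{j}=n\,\}$ (and $x_{n}:=0$ if no $j$ gives $N_{j}=n$), so that $x_{N_{j}}\geq 2\theta_{j}/B_{j}$ and therefore $\|T_{\omega}^{j}Cx\|\geq\theta_{j}\to+\infty$ for every $j.$

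It remains to check $x\in X,$ which is the genuinely delicate step, since the map $j\mapsto N_{j}$ may be many-to-one. As each $j$ determines a unique $N_{j},$ the $l^{p}$ membership follows from the double-counting bound
\begin{align*}
\sum_{n}x_{n}^{p}\leq\sum_{n}\sum_{j:\,N_{j}=n}\Bigl(\tfrac{2\theta_{j}}{B_{j}}\Bigr)^{p}=2^{p}\sum_{j}\Bigl(\tfrac{\theta_{j}}{B_{j}}\Bigr)^{p}<\infty,
\end{align*}
while in the $c_{0}$ case the inequality $x_{n}\geq\delta$ can hold only for the finitely many $j$ with $B_{j}\leq 4/\delta^{2},$ each contributing one index $N_{j},$ so $x_{n}\to0.$ With (a) and (b) verified, Corollary \ref{cea1}(iii) yields that $(T_{\omega}^{j})_{j\in{\mathbb N}}$ is densely $m_{n}'$-distributionally chaotic for every $(m_{n}')\in{\mathrm R};$ in particular $T_{\omega}$ is densely $m_{n}$-distributionally chaotic. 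The hardest part is precisely this construction: forcing the norms to grow along the \emph{entire} sequence $j\to\infty,$ which requires combining the resonance indices $k_{j}$ with the summability of $(1/B_{k})$ through the double-counting estimate rather than placing masses along a sparse subsequence.
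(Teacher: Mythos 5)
Your proposal is correct and follows exactly the route the paper intends: the paper gives no detailed argument, merely pointing to Corollary \ref{cea1}(iii) (applied with the diagonal regularizer $C\langle x_{n}\rangle=\langle a_{n}x_{n}\rangle$) and to the proof of an external reference for the remaining details. Your verification of \eqref{cea} via $\|T_{\omega}^{j}C\|\leq B_{j}$, the choice $X_{0}=\mathrm{span}\{e_{n}\}$, and in particular the construction of a vector $x$ with $\|T_{\omega}^{j}Cx\|\to+\infty$ along the full sequence (resonance indices $k_{j}$ plus the double-counting summability estimate) correctly supplies those outsourced details.
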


An illustrative example of application can be simply given:

\begin{example}\label{tekma}
Let $X:=l^{p}$ for some $p\in [1,\infty)$ or $X:=c_{0},$ and
let $\omega_{n}:=n^{j}$ for some $j>0$. Then we can apply Theorem \ref{rikardinjo} in order to see that the operator $T_{\omega}$ is densely $m_{n}$-distributionally chaotic for any $(m_{n})\in {\mathrm R}.$
\end{example}

In the following example, we construct $m_{n}$-distributionally unbounded vectors directly (cf. also Example \ref{rdc} below):

\begin{example}\label{tekma-prim}
Let us recall that L. Luo and B. Hou has considered the case $X:=l^{1}({\mathbb N})$ and $\langle \omega_{n}\rangle_{n\in {\mathbb N}}:= \langle \frac{2n}{2n-1}\rangle_{n\in {\mathbb N}},$ showing that the corresponding 
operator $T_{\omega}$ is topologically mixing, absolutely Ces\`aro bounded and therefore not distributionally chaotic (\cite{turkish-notes}). The analysis has been recently continued in \cite{2018JMMA}, where it has been shown that the operator $T_{\omega}$ cannot be distributionally chaotic of type $3.$ Consider now the following cases:
\begin{itemize}
\item[1.] $X:=l^{p}({\mathbb N})$ for some $p\in (1,\infty).$
Due to Stirling's formula, we have that $\beta (n):=\prod_{j=1}^{n}\omega_{i} \sim \sqrt{\pi n},$ $n\rightarrow +\infty.$ 
Applying e.g. \cite[Proposition 3.1]{turkish-notes}, we get that the operator $T_{\omega}$ is topologically mixing iff $p>1$ as well as that  $T_{\omega}$ is chaotic iff $p>2.$
Now we will prove that for each $p>1$ and each sequence $(m_{n})\in {\mathrm R},$ the operator $T_{\omega}$ is densely $m_{n}$-distributionally chaotic. 
Let $\epsilon \in (0,1)$ be arbitrarily chosen. Then it is clear that $\langle x_{n} \rangle_{n\in {\mathbb N}}:=\langle n^{-(1+\epsilon)/p} \rangle_{n\in {\mathbb N}} \in X$ as well that there exists a positive finite constant $c>0$ such that 
\begin{align}\label{zelje-eho}
\omega_{n}\omega_{n+1}\cdot \cdot \cdot \omega_{n+j}=\beta (n+j)/\beta (n-1) \geq c \sqrt{1+\frac{j}{n}} \ \ \mbox{ for all }n,\ j \in{\mathbb N}.
\end{align}
We will prove that the vector $x=\langle x_{n} \rangle_{n\in {\mathbb N}}$ satisfies $\lim_{j\rightarrow \infty}\|T_{\omega}^{j}x\|=+\infty,$ so that the final conclusion follows by applying Theorem \ref{djubre}.
Using \eqref{zelje-eho} and the well known result regarding the estimates of  partial sums defining the Riemman zeta function, we get that
\begin{align*}
\bigl\|T_{\omega}^{j}x\bigr\|&=\Biggl( \sum_{n=1}^{\infty}\bigl|\omega_{n}\omega_{n+1}\cdot \cdot \cdot \omega_{n+j}x_{n+j+1}\bigr|^{p}\Biggr)^{1/p}
\\ & \geq c \Biggl( \sum_{n=1}^{\infty}\Bigl(1+\frac{j}{n}\Bigr)^{p/2}\bigl|x_{n+j+1}\bigr|^{p}\Biggr)^{1/p}\geq c\sqrt{j}\Biggl( \sum_{n=1}^{\infty}n^{-\frac{p}{2}}\bigl|x_{n+j+1}\bigr|^{p}\Biggr)^{1/p}
\\ & \geq c\sqrt{j}\Biggl( \sum_{n=1}^{\infty}\frac{1}{(n+1+j)^{\frac{p}{2}+\frac{1+\epsilon}{2}}}\Biggr)^{1/p}
\\ & \geq c\sqrt{j}j^{\frac{1-\bigl[\frac{p}{2}+\frac{1+\epsilon}{2}\bigr]}{p}}
=cj^{\frac{1-\epsilon}{2p}} \rightarrow +\infty,\quad j\rightarrow+\infty.
\end{align*}
\item[2.] $X:=c_{0}({\mathbb N})$. Then $\omega_{n}\geq1$ for all $n\in {\mathbb N}$ and the vector $x:=\langle 1/n \rangle_{n\in {\mathbb N}}$ satisfies $\lim_{j\rightarrow \infty}\|T_{\omega}^{j}x\|=+\infty,$ which
implies by Theorem \ref{djubre} that the operator $T_{\omega}$ is densely $m_{n}$-distributionally chaotic for any sequence $(m_{n}) \in {\mathbb R}.$ Due to \cite[Theorem 4.8]{erdper}, the operator $T_{\omega}$ is both topologically mixing and chaotic.
\end{itemize}
\end{example}

We are turning back to the case in which $X$ is a general Fr\' echet space under our consideration, by
stating the following result:

\begin{thm}\label{na-dobrorc} 
Suppose that $X$ is separable, $(T_{j})_{j\in {\mathbb N}}$ is a sequence in $L(X,Y),$
$X_{0}$ is a dense linear subspace of $X,$ as well as:
\begin{itemize}
\item[(i)] $\lim_{j\rightarrow \infty}T_{j}x=0,$ $x\in X_{0},$
\item[(ii)] there exists a reiteratively $m_{n}$-distributionally unbounded vector $x\in X$ for $(T_{j})_{j\in {\mathbb N}}.$
\end{itemize}
Then there exists a dense uniformly reiteratively $m_{n}$-distributionally irregular manifold of type $1^+$ for $(T_{j})_{j\in {\mathbb N}},$
and particularly, $(T_{j})_{j\in {\mathbb N}}$
is densely reiteratively $m_{n}$-distributionally chaotic of type $1^+$.
\end{thm}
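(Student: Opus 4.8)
The plan is to follow the blueprint of the proof of Theorem \ref{na-dobro} (itself modelled on \cite[Theorem 15]{2013JFA}), replacing the ``full-density'' windows $[1,m_{j_k}]$ used there by Banach-density windows $[N_k+1,N_k+m_{S_k}]$ dictated by hypothesis (ii). As in \eqref{grozno}, I may assume without loss of generality that $p_{Y}^{1}(T_{j}x)\leq p_{j+1}(x)$ for all $x\in X$ and $j\in{\mathbb N}$, and that the reiteratively $m_n$-distributionally unbounded vector $y$ from (ii) satisfies $\lim_{j\in B,\,j\to\infty}p_{Y}^{1}(T_{j}y)=+\infty$ for a set $B\subseteq{\mathbb N}$ with $\underline{Bd}_{l;m_n}(B^{c})=0$ (the seminorm index can be normalised to $1$ after relabelling). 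By Lemma \ref{mile-duo}(i), the condition $\underline{Bd}_{l;m_n}(B^{c})=0$ means precisely that $B^{c}$ is finite or infinite non-syndetic, so that $B$ contains arbitrarily long blocks of consecutive integers; this is the structural fact that drives the whole construction.

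First I would construct inductively three sequences: scales $S_1<S_2<\cdots$, shifts $N_1<N_2<\cdots$ and vectors $x_k\in X_0$ with $p_k(x_k)\leq 1$. Fixing $\varepsilon_k\in(0,1/p_k(y))$ and using $\lim_{j\in B}p_{Y}^{1}(T_j y)=+\infty$, I choose $N_k$ so large, and $S_k$ accordingly, that the window $W_k:=[N_k+1,N_k+m_{S_k}]$ lies so deep that every $j\in B\cap W_k$ satisfies $p_{Y}^{1}(T_j(\varepsilon_k y))\geq k2^{k}$, while, by $\liminf_n\frac{|B^c\cap[n+1,n+m_{S_k}]|}{S_k}=0$, also $|B^{c}\cap W_k|\leq S_k/k$; the continuity of the finitely many operators $T_j$ with $j\in W_k$ then permits the choice of $x_k\in X_0$ approximating $\varepsilon_k y$ so well that $p_k(x_k)\leq 1$ and
\[
\bigl|\{\,j\in W_k : p_{Y}^{1}(T_j x_k)\geq k2^{k}\,\}\bigr|\geq m_{S_k}-\tfrac{S_k}{k}.
\]
Since each $x_s\in X_0$ gives $T_j x_s\to 0$, taking $N_k$ large also forces $p_{Y}^{k}(T_j x_s)<1/k$ for all $j\in W_k$ and $s<k$. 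Crucially, I would impose that the gaps $N_{k+1}-(N_k+m_{S_k})$ grow so fast that $\underline{d}_{m_n}\bigl(\bigcup_k W_k\bigr)=0$. After passing to a sufficiently lacunary subsequence $(r_q)$ satisfying a separation condition of the form \eqref{miruga}, I set $x_\beta:=\sum_{q}\beta_{r_q}x_{r_q}2^{-r_q}$ exactly as in Theorem \ref{na-dobro}.

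The verification then splits into two independent halves. For the \emph{reiterative unboundedness}, repeating the lower estimate of Theorem \ref{na-dobro} on each window $W_{r_k}$ yields $p_{Y}^{1}(T_j x_\beta)\geq r_k-1$ for all but at most $S_{r_k}/r_k$ indices $j\in W_{r_k}$; hence the set $\widetilde B$ on which $p_{Y}^{1}(T_j x_\beta)$ is large contains, up to a negligible part, arbitrarily long blocks, so $\widetilde B^{c}$ is non-syndetic and $\underline{Bd}_{l;m_n}(\widetilde B^{c})=0$ by Lemma \ref{mile-duo}(i), proving that $x_\beta$ is reiteratively $m_n$-distributionally $1$-unbounded. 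For the \emph{near-to-zero} property, the upper estimate \eqref{metrisa} carries over verbatim on the complementary indices, and because the windows were made sparse, the exceptional set $A^{c}$ on which $T_j x_\beta\not\to 0$ satisfies $\underline{d}_{m_n}(A^{c})=0$; thus $x_\beta$ is $m_n$-distributionally near to zero. Combining the two gives that $x_\beta$ is a reiteratively $m_n$-distributionally irregular vector of type $1^{+}$; since all estimates scale with $|\alpha_1-\alpha_2|$ and use the single seminorm index $m=1$, the same holds for every nonzero element of $span\{x_\beta\}$, which is therefore a uniformly reiteratively $m_n$-distributionally irregular manifold of type $1^{+}$. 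Density of this manifold, and hence dense reiterative $m_n$-distributional chaos of type $1^{+}$, is then obtained exactly as in the final step of \cite[Theorem 15]{2013JFA}, by letting $x_\beta$ absorb an arbitrary element of the dense subspace $X_0$.

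The main obstacle I anticipate is the simultaneous control of the two density conditions through the single family of windows $W_k$: they must be long and deep in $B$ (large $S_k$, large $N_k$) to carry the unboundedness of the fixed vector $y$, yet sparse (fast-growing gaps) so that their union has vanishing ordinary lower $m_n$-density for the near-to-zero half. The most delicate technical point is the Banach-density computation itself, since $\underline{Bd}_{l;m_n}$ involves $\liminf_{n}$ rather than $\inf_{n}$ (the very issue flagged after Definition \ref{prcko-prim-merak}); here one must check that, for each fixed scale, the sliding window can be positioned for arbitrarily large $n$ inside one of the long blocks contained in $\widetilde B$, which is exactly what Lemma \ref{mile-duo}(i) guarantees.
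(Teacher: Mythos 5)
Your proposal is correct and follows essentially the same route as the paper's own (outlined) proof: you adapt the construction of Theorem \ref{na-dobro} by replacing the initial segments with Banach-density windows $B_k\subseteq[j_k+1,j_k+m_{s_k}]\cap B$, build $x_\beta=\sum_q\beta_{r_q}x_{r_q}2^{-r_q}$ from approximants of scaled copies of $y$, verify reiterative $1$-unboundedness on $B'=\bigcup_k B_k$ via Lemma \ref{mile-duo}(i), and reuse the \eqref{metrisa}-type estimate for the near-to-zero half before invoking the final step of \cite[Theorem 15]{2013JFA}. The only point to tighten is that the near-to-zero half needs the smallness conditions $p_{Y}^{k}(T_{j}x_{s})<1/k$ to hold on a $(1-k^{-2})$-fraction of the full initial segment $[1,m_{j_{k}}]$ (as in the paper's construction), not merely on the sparse windows, but this is available from $T_{j}x_{s}\to 0$ exactly as you indicate.
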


\begin{proof}
We will only outline the main details.
Without loss of generality, we may assume that \eqref{grozno} holds
as well as that, due to (b), 
there exists a set $B\subseteq {\mathbb N}$ such that $\underline{Bd}_{l;m_{n}}(B^{c})=0$ and $\lim_{j\in B}p_{Y}^{1}(T_{j}y)=+\infty.$ 
Using this,  we can construct a sequence $(x_{k})_{k\in {\mathbb N}}$ in $X_{0},$ a strictly increasing sequence $(j_{k})_{k\in {\mathbb N}}$ of positive integers and 
a sequence $(B_{k})_{k\in {\mathbb N}}$ of subsets consisted of positive integers such that $\sup B_{k} <\inf B_{k+1}$ for all $k\in {\mathbb N},$
as well as that, for every $j,\ k\in {\mathbb N},$ one has: 
$B_{k}\subseteq [j_{k}+1,j_{k}+m_{s_{k}}] \cap B,$ $|B_{k}|\geq m_{s_{k}}-(s_{k}/k),$
$p_{k}(x_{k})\leq 1,$ 
$p_{Y}^{1}(T_{j}x_{k})\geq k2^{k}$ for $j\in B_{k},$ $p_{Y}^{k}(T_{j}x_{s})\leq 1/(k+1)$ for all $s\in {\mathbb N}_{k-1}
$ and $j\in {\mathbb N}$ with 
$j\in B_{k},$ as well as: 
$$
\frac{\bigl| \{1\leq j \leq m_{j_{k}} : p_{Y}^{k}(T_{j}x_{s})<1/k \}\bigr|}{j_{k}}\geq 1-k^{-2},\quad s\in {\mathbb N}_{k-1}.
$$
In particular, for each $k\in {\mathbb N}$ we have:
\begin{align}\label{zajebano}
p_{Y}^{1}(T_{j}x_{s})\leq 1/(k+1),\quad s\in {\mathbb N}_{k-1},\ 
j\in B_{k}.
\end{align}
Set $B':=\bigcup_{k\in {\mathbb N}}B_{k}.$ Since $\underline{Bd}_{l;m_{n}}(B^{c})=0,$ it is clear that\\ $\liminf_{s\rightarrow \infty}\inf_{n\in {\mathbb N}}\frac{|B^{c} \cap [n+1,n+m_{s}]|}{s}=0,$ which simply implies that\\ $\liminf_{s\rightarrow \infty}\inf_{n\in {\mathbb N}}\frac{|(B')^{c} \cap [n+1,n+m_{s}]|}{s}=0.$ On the other hand, Lemma \ref{mile-duo}(i) implies that $\underline{Bd}_{l;m_{n}}((B')^{c})=0,$
provided that $(B')^{c}$ is finite of infinite non-syndetic. If  $(B')^{c}$ is syndetic, then it is clear that there exists a finite constant $d>0$ such that $\underline{Bd}_{l;m_{n}}((B')^{c})\leq \liminf_{s\rightarrow \infty}(m_{s}/s)\leq d\liminf_{s\rightarrow \infty}\inf_{n\in {\mathbb N}}\frac{|(B')^{c} \cap [n+1,n+m_{s}]|}{s}=0;$ summa summarum, we have $\underline{Bd}_{l;m_{n}}((B')^{c})=0.$
Take now any strictly increasing sequence $(r_{q})_{q\in {\mathbb N}}$ of positive integers 
such that $r_{q+1}\geq 1+r_{q}+j_{r_{q}+1}$ for all $q\in {\mathbb N}.$ 
Let $\alpha \in \{0,1\}^{\mathbb N}$ be a sequence defined by $\alpha_{n}=1$ iff $n=r_{q}$ for some $q\in {\mathbb N}.$
Further on, let $\beta \in \{0,1\}^{\mathbb N}$ 
contains an infinite number of $1'$s and let $\beta_{q}\leq \alpha_{q}$ for all $q\in {\mathbb N}.$ If $\beta_{r_{k}}=1$ for some $k\in {\mathbb N}$ and $x_{\beta}=\sum_{q=1}^{\infty}\beta_{r_{q}}x_{r_{q}}/2^{r_{q}},$ then for each $j\in B_{j_{r_{k}}}$ we have (see \eqref{zajebano} and observe that for such values of $j$ we have $1+q\leq 1+j_{r_{k}}\leq r_{k+1}\leq r_{q}$ for $q>k$): 
\begin{align*}
p_{Y}^{1}\bigl( T_{j}x_{\beta}\bigr) & \geq r_{k}-\sum_{q<k}\frac{p_{Y}^{1}(T_{j}x_{r_{q}})}{2^{r_{q}}} -\sum_{q>k}\frac{p_{Y}^{1}(T_{j}x_{r_{q}})}{2^{r_{q}}}
\\ & \geq r_{k}-\sum_{q<k}\frac{p_{Y}^{1}(T_{j}x_{r_{q}})}{2^{r_{q}}}-\sum_{q>k}\frac{p_{1+j}(x_{r_{q}})}{2^{r_{q}}}
\\ & \geq r_{k}-\sum_{q<k}\frac{1}{2^{r_{q}}(1+r_{q})} -\sum_{q> k}\frac{1}{2^{r_{q}}} \geq r_{k}-2.
\end{align*}
Hence, $\lim_{j\in B'}p_{Y}^{1}(T_{j}x_{\beta})=
\infty$ and the vector $x_{\beta}$ is reiteratively $m_{n}$-distributionally $1$-unbounded. Arguing in the same way
as in the proof of Theorem \ref{na-dobro}, we get that $x_{\beta}$ is $m_{n}$-distributionally near to zero. The conclusion of theorem
now simply follows by copying the final part of proof of \cite[Theorem 15]{2013JFA}.
\end{proof}

For the orbits of a linear continuous operator $T\in L(X),$ the condition (ii) in the formulation of Theorem \ref{na-dobro} is equivalent with its unboundedness; therefore, Theorem \ref{na-dobro} provides an extension of \cite[Corollary 21]{band}. We can also state the following corollary:

\begin{cor}\label{rdctype}
Suppose $T\in L(X),$ $X$ is a separable Banach space,
$X_{0}$ is a dense linear subspace of $X$ and
$\lim_{j\rightarrow \infty}T^{j}x=0,$ $x\in X_{0}.$
Then $T$ is densely Li-Yorke chaotic iff $T$ is densely reiteratively $m_{n}$-distributionally chaotic of type $0$ or $1^+.$ 
\end{cor}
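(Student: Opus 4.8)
The plan is to prove the equivalence as a short cycle, first collapsing the disjunction ``type $0$ or $1^{+}$'' into a single node by recording two elementary implications. First I would observe that reiterative $m_{n}$-distributional chaos of type $1^{+}$ implies that of type $0$: both require $BF_{x,y,m_{n}}(\sigma)=0$, and the extra hypothesis $G_{x,y,m_{n}}\equiv 0$ of type $1^{+}$ forces $BG_{x,y,m_{n}}\equiv 0$, since $\underline{Bd}_{l;m_{n}}(A)\leq \underline{d}_{m_{n}}(A)$ for every $A\subseteq {\mathbb N}$ (the inner $\liminf_{n}$ is bounded above by its value at $n=0$, namely $|A\cap[1,m_{s}]|/s$, and one then passes to $\liminf_{s}$). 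As the scrambled set is unchanged, density is preserved, so dense type $1^{+}$ implies dense type $0$. Next I would check that every reiteratively $m_{n}$-distributionally chaotic pair of type $0$ is a Li-Yorke pair: by Lemma \ref{mile-duo}(i), $BF_{x,y,m_{n}}(\sigma)=0$ means $\{j : d(T^{j}x,T^{j}y)<\sigma\}$ is finite or infinite non-syndetic, whence its complement contains arbitrarily long intervals and $\limsup_{j}d(T^{j}x,T^{j}y)\geq \sigma>0$; similarly $BG_{x,y,m_{n}}\equiv 0$ gives, for each $\delta>0$, infinitely many $j$ with $d(T^{j}x,T^{j}y)<\delta$, so $\liminf_{j}d(T^{j}x,T^{j}y)=0$. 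Hence dense type $0$ implies dense Li-Yorke chaos, settling the ``if'' direction.

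For the converse, which is the substantial direction, I would start from dense Li-Yorke chaos and manufacture the hypotheses of Theorem \ref{na-dobrorc}. Its hypothesis (i) is exactly the standing assumption $\lim_{j\rightarrow\infty}T^{j}x=0$ on the dense subspace $X_{0}$. For hypothesis (ii) I would use that, $X$ being a Banach space, Li-Yorke chaos of $T$ yields a Li-Yorke irregular vector $x$, i.e.\ $\inf_{j}\|T^{j}x\|=0$ and $\sup_{j}\|T^{j}x\|=+\infty$; by Remark \ref{pripazise}, such an $x$ is reiteratively $m_{n}$-distributionally unbounded, that is, there is $B\subseteq {\mathbb N}$ with $\underline{Bd}_{l;m_{n}}(B^{c})=0$ and $\lim_{j\in B}\|T^{j}x\|=+\infty$. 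Theorem \ref{na-dobrorc} then produces a dense uniformly reiteratively $m_{n}$-distributionally irregular manifold of type $1^{+}$, so $T$ is densely reiteratively $m_{n}$-distributionally chaotic of type $1^{+}$.

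Assembling the pieces gives the cycle dense Li-Yorke $\Rightarrow$ dense type $1^{+}$ $\Rightarrow$ dense type $0$ $\Rightarrow$ dense Li-Yorke, so all three are equivalent and in particular $T$ is densely Li-Yorke chaotic iff it is densely reiteratively $m_{n}$-distributionally chaotic of type $0$ or of type $1^{+}$. I expect the only real obstacle to be the correct invocation of Theorem \ref{na-dobrorc} in the forward direction: although Proposition \ref{ekvivalentno} already records the equivalence of Li-Yorke chaos and type-$0$ chaos, its proof exhibits only the one-dimensional scrambled set $span\{x\}$ and therefore yields the \emph{non-dense} statement; it is precisely the hypothesis $\lim_{j}T^{j}x=0$ on a dense $X_{0}$, fed into Theorem \ref{na-dobrorc}, that upgrades the single irregular vector to a \emph{dense} irregular manifold. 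The remaining steps are routine density bookkeeping and the elementary inequality $\underline{Bd}_{l;m_{n}}\leq \underline{d}_{m_{n}}$.
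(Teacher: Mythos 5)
Your proof is correct and follows essentially the same route as the paper: the substantive direction is dense Li--Yorke $\Rightarrow$ dense type $1^{+}$, obtained by extracting a reiteratively $m_{n}$-distributionally unbounded vector from an unbounded orbit (Remark \ref{pripazise} / the first part of the proof of Proposition \ref{ekvivalentno}) and feeding it into Theorem \ref{na-dobrorc} — which is indeed the theorem needed, even though the paper's text cites Theorem \ref{na-dobro} at that point. The only flaw is your justification of $\underline{Bd}_{l;m_{n}}(A)\leq \underline{d}_{m_{n}}(A)$: the inner $\liminf_{n\rightarrow\infty}$ is a limit inferior as $n\rightarrow\infty$, not an infimum over all $n$, so it is \emph{not} bounded above by the value of the expression at $n=0$. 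Fortunately you only need the implication $\underline{d}_{m_{n}}(A)=0\Rightarrow \underline{Bd}_{l;m_{n}}(A)=0$, and this follows from Lemma \ref{mile-duo}: by part (ii), $\underline{d}_{m_{n}}(A)=0$ forces $A$ to be finite or infinite non-syndetic, and by part (i) this gives $\underline{Bd}_{l;m_{n}}(A)=0$; with that repair, your chain dense type $1^{+}\Rightarrow$ dense type $0\Rightarrow$ dense Li--Yorke (which the paper dismisses as trivial) is sound.
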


\begin{proof}
All non-trivial that we need to show is that the dense Li-Yorke chaos for $T$ implies dense reiterative $m_{n}$-distributional chaos of type $1^+$ for $T.$ Assume that $T$ has an unbounded orbit. Arguing as in the first part of proof of Proposition \ref{ekvivalentno}, we get that $T$ admits a reiteratively $m_{n}$-distributionally unbounded orbit. Applying Theorem \ref{na-dobro}, we get that there exists a dense uniformly reiteratively $m_{n}$-distributionally irregular manifold of type $1^+$ for $T.$
\end{proof}

\begin{example}\label{molim-te}
It is worth noting that Corollary \ref{rdctype}
can be applied in the analysis of multiplication operators and their adjoints in Hilbert spaces. 
For example, the Li-Yorke chaos of an adjoint multiplication operator $M_{\varphi}^{\ast}$ considered in \cite[Theorem 26(ii)]{band} is not only equivalent with its hypercyclicity but also with reiterative distributional chaos of type $s$ for any $s\in \{0,1,1^+\};$ this follows from Corollary \ref{rdctype} and the arguments contained in the proof of \cite[Theorem 4.5]{godefroy}.
\end{example}

Arguing as in the proof of \cite[Theorem 15]{2013JFA} and Theorem \ref{na-dobro} above, we can similarly deduce the following extension of last mentioned result for general sequences of linear continuous operators (the extension is proper even for orbits of linear continuous operators):

\begin{thm}\label{na-dobro1}
Suppose that $X$ is separable, $(m_{n}) \in {\mathbb R},$ $(T_{j})_{j\in {\mathbb N}}$ is a sequence in $L(X,Y),$
$X_{0}$ is a dense linear subspace of $X,$ as well as:
\begin{itemize}
\item[(i)] $\lim_{j\rightarrow \infty}T_{j}x=0,$ $x\in X_{0},$
\item[(ii)] there exists a vector $y\in X$ such that the sequence $(T_{j}y)_{j\in {\mathbb N}}$ is unbounded.
\end{itemize}
Then there exist a number $m\in {\mathbb N}$ and a dense
submanifold $W$ of $X$ consisting of those vectors $x$ for which 
the sequence $(p_{m}(T_{j}x))_{j\in {\mathbb N}}$ is unbounded and which are $m_{n}$-distributionally near to zero for $(T_{j})_{j\in {\mathbb N}}.$
In particular, $(T_{j})_{j\in {\mathbb N}}$
is densely Li-Yorke chaotic.
\end{thm}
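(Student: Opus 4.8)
The plan is to transcribe the proof of Theorem~\ref{na-dobro}, weakening only the ``unbounded'' side: where that proof used an $m_n$-distributionally unbounded vector to force the orbit to be large on a set of almost full lower $m_n$-density, I will force it to be large only at a single, well-separated index, which is exactly what hypothesis (ii) supplies. First I would normalise, assuming without loss of generality that \eqref{grozno} holds, and fix $m\in{\mathbb N}$ with $\sup_{j\in{\mathbb N}}p_Y^m(T_j y)=+\infty$; this $m$ is the one in the conclusion. Since each $p_Y^m(T_j y)$ is a fixed finite number, an infinite supremum forces arbitrarily large indices $j$ at which $p_Y^m(T_j y)$ is arbitrarily large, and this is the sole use of (ii).

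Then I would run the inductive construction of a sequence $(x_k)$ in $X_0$ and a strictly increasing $(j_k)$, as in Theorem~\ref{na-dobro} but with the large requirement replaced by a one-index version. Having chosen $x_1,\dots,x_{k-1}\in X_0$, I fix a scalar $\varepsilon_k>0$ with $\varepsilon_k(1+p_k(y))\le 1$ and then choose $j_k$ so large that: $p_Y^m(T_{j_k}y)\ge 2k2^k/\varepsilon_k$; the decay $\lim_{j\to\infty}T_j x_s=0$ (valid since $x_s\in X_0$) makes $[1,m_{j_k}]$ contain at least $m_{j_k}-j_k/k$ indices $j$ with $p_Y^k(T_j x_s)<1/k$ for each $s<k$; and $p_Y^m(T_{j_k}x_s)<2^{-k}$ for $s<k$. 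By continuity of the fixed operator $T_{j_k}$ I pick $\tilde y_k\in X_0$ so close to $y$ that $p_Y^m(T_{j_k}\tilde y_k)\ge\tfrac12 p_Y^m(T_{j_k}y)$ and set $x_k:=\varepsilon_k\tilde y_k\in X_0$; this yields $p_k(x_k)\le 1$ together with the single-index largeness $p_Y^m(T_{j_k}x_k)\ge k2^k$, replacing the near-full-density largeness of Theorem~\ref{na-dobro}. Passing to a lacunary subsequence $(r_q)$ and forming $x_\beta=\sum_q\beta_{r_q}x_{r_q}/2^{r_q}$ exactly as there, the tail estimate based on \eqref{grozno} and lacunarity gives $p_Y^m(T_{j_{r_k}}x_\beta)\ge r_k-2$; as $r_k\to\infty$ this makes the orbit $(p_Y^m(T_j x_\beta))_j$ merely unbounded, which now suffices, while the spread condition yields, verbatim through \eqref{metrisa}, that $d_Y(T_j x_\beta,0)$ is small for all $j$ in a set of full lower $m_n$-density, so $x_\beta$ is $m_n$-distributionally near to zero.

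To promote the line $\mathrm{span}\{x_\beta\}$ to a \emph{dense} submanifold $W$ whose every nonzero vector has both properties, I would invoke the interleaving at the end of the proof of \cite[Theorem~15]{2013JFA}: the blocks $x_k$ are built against a fixed dense sequence so that their span is dense, and the large indices $j_{r_k}$ are arranged to persist in every nonzero finite combination. The near-zero property passes to $W$ because the estimate \eqref{metrisa} is linear in the building blocks and holds on one common set of full lower $m_n$-density (this is consistent with Proposition~\ref{nemoj}, which already makes the set of $m_n$-distributionally near to zero vectors residual once it is dense, as it contains $X_0$). Finally, dense Li-Yorke chaos is immediate: for distinct $x,x'\in W$ the vector $u=x-x'\in W\setminus\{0\}$ has $\liminf_{j\to\infty}d_Y(T_j u,0)=0$ (the near-zero exceptional set has zero lower $m_n$-density, which since $\liminf_n m_n/n>0$ forces the good set to be infinite) and $\limsup_{j\to\infty}d_Y(T_j u,0)>0$ (since $\sup_j p_Y^m(T_j u)=+\infty$ and $d_Y(T_j u,0)\ge 2^{-m}\,p_Y^m(T_j u)/(1+p_Y^m(T_j u))$), so by translation invariance of $d_Y$ every such pair is a Li-Yorke pair and $W$ is a dense scrambled set.

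The hard part is the third paragraph: keeping the $p_Y^m$-orbit \emph{unbounded} for every nonzero element of a \emph{dense} manifold when largeness is only guaranteed at the isolated indices $j_{r_k}$. One must guarantee, in the interleaving with the dense sequence, that the single large contribution of each building block at its index is not cancelled by the other blocks; this coherence is precisely the bookkeeping carried out in \cite[Theorem~15]{2013JFA}, and is the step I would treat most carefully. The normalisation, the selection of $j_k$, the tail estimates and the near-zero verification are all direct transcriptions of the corresponding steps in Theorem~\ref{na-dobro}.
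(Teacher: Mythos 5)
Your proposal is correct and follows essentially the paper's own route: the paper gives no separate proof of this theorem, merely the instruction to argue as in Theorem \ref{na-dobro} and \cite[Theorem 15]{2013JFA}, and your adaptation (replacing the near-full-density largeness by single-index largeness at well-separated indices $j_k$, keeping the near-to-zero estimate verbatim, and deferring the densification to the interleaving of \cite[Theorem 15]{2013JFA}) is exactly the intended argument. The only point worth recording explicitly is the one you already flag: that the isolated large indices survive in every nonzero combination, which is the bookkeeping of \cite[Theorem 15]{2013JFA} and is unaffected by weakening ``distributionally unbounded'' to ``unbounded.''
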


\begin{rem}\label{jklp}
\begin{itemize}
\item[(i)]
It is worth noting that Theorem \ref{na-dobro} and Theorem \ref{na-dobro1} provide extensions of \cite[Theorem 3.1, Corollary 3.2]{bk}, where the orbits of an operator in Banach space have been considered.
\item[(ii)] A slight generalization of Theorem \ref{na-dobro1} for disjoint Li-Yorke chaotic operators has been recently established and proved in \cite{ddc-ly}.
\end{itemize}
\end{rem}

Now we will state and prove the following result, which is closely linked with Theorem \ref{na-dobro} and Theorem \ref{na-dobro1}:

\begin{thm}\label{na-dobro2}
Suppose that $X$ is separable, $(T_{j})_{j\in {\mathbb N}}$ is a sequence in $L(X,Y),$
$X_{0}$ is a dense linear subspace of $X,$ as well as:
\begin{itemize}
\item[(i)] $\lim_{j\rightarrow \infty}T_{j}x=0,$ $x\in X_{0},$
\item[(ii)] there exist $m\in {\mathbb N},$ $c>0,$ $x\in X$ and set $B\subseteq {\mathbb N}$ such that $\overline{d}_{m_{n}}(B)=c$ and $\lim_{j\in B}p_{m}^{Y}(T_{j}x)=+\infty.$ 
\end{itemize}
Then there exists a dense $m_{n}$-distributionally irregular manifold of type $2$ for $(T_{j})_{j\in {\mathbb N}},$
and particularly, $(T_{j})_{j\in {\mathbb N}}$
is densely $m_{n}$-distributionally chaotic of type $2.$
\end{thm}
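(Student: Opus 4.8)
The plan is to rerun the construction in the proof of Theorem \ref{na-dobro} almost verbatim, the single substantive change being that the hypothesis ``$\underline{d}_{m_{n}}(B^{c})=0$'' used there is replaced by the present weaker hypothesis ``$\overline{d}_{m_{n}}(B)=c$''; this is exactly the weakening that downgrades the conclusion from type $1$ to type $2$. First I would normalize the seminorms so that \eqref{grozno} holds and, after replacing $(p_{Y}^{n})_{n}$ by the cofinal subsystem $(p_{Y}^{n})_{n\ge m}$, assume that the index in (ii) is $m=1$, so that $\lim_{j\in B}p_{Y}^{1}(T_{j}x)=+\infty$ and $\overline{d}_{m_{n}}(B)=c>0$. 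Since $\overline{d}_{m_{n}}(B)=\limsup_{n}\frac{|B\cap[1,m_{n}]|}{n}=c$, I would fix a strictly increasing sequence $(N_{k})$ with $\frac{|B\cap[1,m_{N_{k}}]|}{N_{k}}\to c$ and note that, since $p_{Y}^{1}(T_{j}x)\to+\infty$ as $j\to\infty$ in $B$, all but finitely many elements of $B\cap[1,m_{N_{k}}]$ satisfy $p_{Y}^{1}(T_{j}x)\ge k2^{k}+1$.

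Next I would reproduce the inductive construction of Theorem \ref{na-dobro}, obtaining a sequence $(x_{k})$ in $X_{0}$, a strictly increasing sequence $(j_{k})\subseteq(N_{k})$ with $p_{k}(x_{k})\le 1$, and the two families of conditions
\[
\Bigl|\bigl\{1\le j\le m_{j_{k}}:p_{Y}^{1}(T_{j}x_{k})\ge k2^{k}\bigr\}\Bigr|\ge\Bigl(c-\tfrac1k\Bigr)j_{k},
\]
\[
\Bigl|\bigl\{1\le j\le m_{j_{k}}:p_{Y}^{k}(T_{j}x_{s})<1/k\bigr\}\Bigr|\ge m_{j_{k}}-\tfrac{j_{k}}{k^{2}},\qquad s=1,\dots,k-1 .
\]
The first is the only genuine modification: in place of the near-full count $m_{j_{k}}-j_{k}/k$ of Theorem \ref{na-dobro} we now only guarantee the proportion $(c-1/k)j_{k}$, obtained by approximating $x$ by $x_{k}\in X_{0}$ along $B\cap[1,m_{j_{k}}]$. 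The near-to-zero counts I would strengthen from $m_{j_{k}}-j_{k}/k$ to $m_{j_{k}}-j_{k}/k^{2}$; this is harmless because $x_{s}\in X_{0}$ gives $T_{j}x_{s}\to 0$, so the exceptional index set is finite and $j_{k}$ may be chosen large. I would then form $x_{\beta}=\sum_{q}\beta_{r_{q}}x_{r_{q}}/2^{r_{q}}$ exactly as in Theorem \ref{na-dobro}, with $(r_{q})$ subject to the same gap condition \eqref{miruga}.

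The near-to-zero half of the argument is untouched: the chain of estimates \eqref{metrisa}--\eqref{lepo2} does not refer to the unbounded set at all, so it carries over and yields $G_{\alpha_{1}x_{\beta},\alpha_{2}x_{\beta},m_{n}}\equiv 0$ for all distinct scalars $\alpha_{1},\alpha_{2}$; in particular $G_{x_{\beta},0,m_{n}}\equiv 0$. What changes is the estimate \eqref{lepo1}. On the set $\{1\le j\le m_{j_{r_{k}}}:p_{Y}^{1}(T_{j}x_{r_{k}})\ge r_{k}2^{r_{k}}\}$, intersected with the $r_{k}$-near-to-zero sets of $x_{r_{q}}$ for $q<k$, the computation of Theorem \ref{na-dobro} still gives $p_{Y}^{1}(T_{j}x_{\beta})\ge r_{k}-2$, hence $d_{Y}(T_{j}\alpha_{1}x_{\beta},T_{j}\alpha_{2}x_{\beta})\ge 4^{-1}=:\sigma$ once $k$ is large. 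The strengthened near-to-zero bound makes the total number of deleted indices at most $j_{r_{k}}/r_{k}$, so this set retains at least $(c-2/r_{k})j_{r_{k}}$ points of $[1,m_{j_{r_{k}}}]$. Taking the limit superior along $(j_{r_{k}})$ gives
\[
I_{\alpha_{1}x_{\beta},\alpha_{2}x_{\beta},m_{n}}(\sigma)=\overline{d}_{m_{n}}\Bigl(\bigl\{j:d_{Y}(T_{j}\alpha_{1}x_{\beta},T_{j}\alpha_{2}x_{\beta})\ge\sigma\bigr\}\Bigr)\ge c>0 .
\]
Thus each nonzero $x_{\beta}$ is an $m_{n}$-distributionally irregular vector of type $2$ in the sense of Definition \ref{-ira}(iii); since $d_{Y}$ is translation invariant, $d_{Y}(T_{j}u,T_{j}v)=d_{Y}(T_{j}(u-v),0)$, so every pair of distinct points of the resulting manifold satisfies (DC2). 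Passing from a single $x_{\beta}$ to a dense irregular manifold of type $2$, and hence to dense $m_{n}$-distributional chaos of type $2$, is then carried out exactly by the density argument closing the proof of \cite[Theorem 15]{2013JFA}.

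The crux, and the only place demanding care beyond Theorem \ref{na-dobro}, is the bookkeeping in this last estimate. Because (ii) supplies only upper density $c$, the ``spike'' set has size merely $\sim c\,j_{r_{k}}$ rather than $\sim m_{j_{r_{k}}}$; consequently one can no longer conclude that the near set has vanishing density (which in Theorem \ref{na-dobro} produced $F(\sigma)=0$ and hence type $1$), but only that the far set occupies a proportion $\ge c-o(1)$, which is precisely the upper-density statement $I(\sigma)\ge c>0$ defining type $2$. Keeping this positive proportion intact after intersecting with the finitely many near-to-zero sets of the earlier generators is exactly why the near-to-zero counts must be sharpened to $m_{j_{k}}-j_{k}/k^{2}$, so that their cumulative deficiency is $o(c\,j_{r_{k}})$; the lower-density, hence subsequential, statement $G\equiv 0$ is, by contrast, inherited unchanged from Theorem \ref{na-dobro}.
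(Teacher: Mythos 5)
Your proposal is correct and follows essentially the same route as the paper: reduce to $m=1$ by adjusting the seminorm system, rerun the inductive construction of Theorem \ref{na-dobro} with the spike count weakened from near-full to a proportion $\sim c\,j_{k}$ of $[1,m_{j_{k}}]$ (the paper uses $cj_{k}(1-k^{-2})$ and near-to-zero deficiency $\tfrac{c}{2}\tfrac{j_{k}}{k}$ where you use $(c-1/k)j_{k}$ and $j_{k}/k^{2}$, an immaterial difference in bookkeeping), and then conclude $G\equiv 0$ together with $I(\sigma)>0$ for the vectors $x_{\beta}$, finishing with the density argument of \cite[Theorem 15]{2013JFA}. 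No gaps.
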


\begin{proof}
In order to see that the equation \eqref{grozno} can be again used with $m=1,$ it suffices to observe the following:
\begin{itemize}
\item[1.] Suppose that $Y$ is a pure Fr\' echet space.
Then we can always construct a fundamental system
$(p_{n}'(\cdot))_{n\in {\mathbb N}}$ of
increasing seminorms on the space $X,$ inducing the same topology on $X$,
so that $p_{m}^{Y}(T_{j}x)\leq p_{j+m}'(x),$ $x\in X,$ $j,\ m\in {\mathbb N}.$ 
\item[2.] If $(Y,\| \cdot\|_{Y})$ is a Banach space, then we can renorm $Y$ by using the fundamental system $(p_{m}^{Y}\equiv m\|\cdot\|)_{m\in {\mathbb N}},$ turning $Y$ into a linearly and topologically homeomorphic Fr\' echet space for which the induced metric $d_{Y}: Y\times Y \rightarrow [0,1]$ satisfies  $d_{y}(x,y)\leq \|x-y\|\sum_{m=0}^{\infty}m2^{-m}$ for all $x,\ y\in Y$ as well as the implication: $\|x\|>k^{-1}$ for some $x\in X$ and $k\in {\mathbb N}\Rightarrow d_{Y}(x,0)\geq 1/2(k+1).$
\end{itemize}
In our new situation,
we can construct a sequence $(x_{k})_{k\in {\mathbb N}}$ in $X_{0}$ and a strictly increasing sequence $(j_{k})_{k\in {\mathbb N}}$ of positive integers such that, for every $k\in {\mathbb N},$ one has: $p_{k}(x_{k})\leq 1,$ 
$$
\Bigl| \{1\leq j \leq m_{j_{k}} : p_{Y}^{1}(T_{j}x_{k})>k2^{k} \}\Bigr|\geq cj_{k}\Bigl(1-k^{-2}\Bigr)
$$
and
$$
\Bigl| \{1\leq j \leq m_{j_{k}} : p_{Y}^{k}(T_{j}x_{s})<1/k \}\Bigr|\geq m_{j_{k}}-\frac{c}{2}\frac{j_{k}}{k},\quad s\in {\mathbb N}_{k-1}.
$$
The remaining part of proof can be deduced by repeating verbatim the arguments used in the proofs of Theorem \ref{na-dobro} and \cite[Theorem 15]{2013JFA}. 
\end{proof}

We can also clarify the following statement regarding the existence of dense $m_{n}$-distributionally irregular manifolds of type $2_{Bd}:$

\begin{thm}\label{na-dobro3} 
Suppose that $X$ is separable, $(T_{j})_{j\in {\mathbb N}}$ is a sequence in $L(X,Y),$
$X_{0}$ is a dense linear subspace of $X,$ as well as:
\begin{itemize}
\item[(i)] $\lim_{j\rightarrow \infty}T_{j}x=0,$ $x\in X_{0},$
\item[(ii)] there exist $m\in {\mathbb N},$ $c>0,$ $x\in X$ and set $B\subseteq {\mathbb N}$ such that $\overline{Bd}_{l:m_{n}}(B)=c$ and $\lim_{j\in B}p_{m}^{Y}(T_{j}x)=+\infty.$
\end{itemize}
Then there exists a dense $m_{n}$-distributionally irregular manifold of type $2_{Bd}$ for $(T_{j})_{j\in {\mathbb N}},$
and particularly, $(T_{j})_{j\in {\mathbb N}}$
is densely $m_{n}$-distributionally chaotic of type $2_{Bd}.$
\end{thm}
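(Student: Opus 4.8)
The plan is to follow the scheme of Theorem~\ref{na-dobro} and Theorem~\ref{na-dobro2}, replacing the upper-density bookkeeping by the Banach-density bookkeeping of Definition~\ref{prcko-prim-merak}. First I would carry out the same preliminary reduction as in Theorem~\ref{na-dobro2}: by the renorming arguments~1.--2.\ given there one may assume that \eqref{grozno} holds with $m=1$, so that $p_{Y}^{1}(T_{j}x)\leq p_{1+j}(x)$ for all $x\in X$ and $j\in{\mathbb N}$. The only genuinely new feature compared with Theorem~\ref{na-dobro2} is that the prescribed unboundedness is now measured by $\overline{Bd}_{l:m_{n}}$ instead of $\overline{d}_{m_{n}}$; everything concerning $m_{n}$-distributional nearness to zero will be literally the same.

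Next I would exploit hypothesis (ii). Since $\overline{Bd}_{l:m_{n}}(B)=c>0$, Definition~\ref{prcko-prim-merak} yields, for every $\varepsilon\in(0,c)$ and every sufficiently large $s$, an integer $n=n(s)$ with $|B\cap[n+1,n+m_{s}]|\geq(c-\varepsilon)s$; because $\lim_{j\in B}p_{Y}^{m}(T_{j}x)=+\infty$, these windows can be taken so deep inside $B$ that $p_{Y}^{m}(T_{j}x)$ is as large as we wish on them. Using this, together with the density of $X_{0}$ and (i), I would construct a sequence $(x_{k})$ in $X_{0}$, a strictly increasing sequence $(j_{k})$, scales $s_{k}\uparrow\infty$ chosen to grow with \emph{bounded ratio} $s_{k+1}/s_{k}\leq C$, and finite sets $B_{k}\subseteq[n_{k}+1,n_{k}+m_{s_{k}}]$ with $|B_{k}|\geq(c-1/k)s_{k}$, such that $p_{k}(x_{k})\leq 1$, $p_{Y}^{1}(T_{j}x_{k})\geq k2^{k}$ for $j\in B_{k}$, and $|\{1\leq j\leq m_{j_{k}}:p_{Y}^{k}(T_{j}x_{s})<1/k\}|\geq m_{j_{k}}-j_{k}/k$ for $s\in{\mathbb N}_{k-1}$. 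These are the exact analogues of the two displayed estimates in the proof of Theorem~\ref{na-dobro2}, the first one now being localized to the sliding Banach window $B_{k}$ rather than to $[1,m_{j_{k}}]$.

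Then I would form $x_{\beta}=\sum_{q=1}^{\infty}\beta_{r_{q}}x_{r_{q}}/2^{r_{q}}$ for a sufficiently sparse strictly increasing sequence $(r_{q})$, exactly as in Theorem~\ref{na-dobro}. The $m_{n}$-distributional nearness to zero of $x_{\beta}$ (i.e.\ $\underline{d}_{m_{n}}$ of the relevant bad set equals $0$) is obtained by copying verbatim the estimates \eqref{metrisa}--\eqref{lepo2} from the proof of Theorem~\ref{na-dobro}; note that this also forces $G_{\alpha_{1}x_{\beta},\alpha_{2}x_{\beta},m_{n}}\equiv 0$ for distinct scalars, which is the other half of a type-$2_{Bd}$ pair. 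For the large values, on each $B_{k}$ the $r_{k}$-th summand dominates and yields $p_{Y}^{1}(T_{j}x_{\beta})\geq r_{k}-2$, whence $d_{Y}(T_{j}x_{\beta},0)\geq\sigma$ for a fixed $\sigma>0$ and all $j\in B_{k}$; thus the window $[n_{k}+1,n_{k}+m_{s_{k}}]$ already contains at least $(c-1/k)s_{k}$ points of $G:=\{j\in{\mathbb N}:d_{Y}(T_{j}x_{\beta},0)\geq\sigma\}$.

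The crux, and the step I expect to be the main obstacle, is to turn this into $BI_{x_{\beta},0,m_{n}}(\sigma)=\overline{Bd}_{l:m_{n}}(G)>0$. Since this quantity is a $\liminf$ over $s$, I must produce a good window of size $m_{s}$ for \emph{every} large $s$, not merely for $s=s_{k}$. For $s\in[s_{k},s_{k+1})$ I would enlarge the fixed window to $[n_{k}+1,n_{k}+m_{s}]\supseteq B_{k}$ (using $m_{s}\geq m_{s_{k}}$), obtaining $\sup_{n\in{\mathbb N}}\frac{|G\cap[n+1,n+m_{s}]|}{s}\geq\frac{(c-1/k)s_{k}}{s}\geq\frac{c-1/k}{C}$, so that the $\liminf$ over $s$ is at least $c/C>0$. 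Here it is essential that Definition~\ref{prcko-prim-merak} uses $\sup_{n\in{\mathbb N}}$ rather than $\limsup_{n\rightarrow\infty}$: the favourable windows sit at the fixed positions $n_{k}$ dictated by $B$ and need not recede to infinity, exactly the phenomenon illustrated by the example $A=\{n^{2}\}$ recorded after Definition~\ref{prcko-prim-merak}. This shows $x_{\beta}$ is an $m_{n}$-distributionally irregular vector of type $2_{Bd}$. Finally, the passage from one such vector to a \emph{dense} $m_{n}$-distributionally irregular manifold of type $2_{Bd}$ is carried out by copying the final part of the proof of \cite[Theorem~15]{2013JFA}, and by observation~[A.] such a manifold is automatically a scrambled set of type $2_{Bd}$, whence $(T_{j})_{j\in{\mathbb N}}$ is densely $m_{n}$-distributionally chaotic of type $2_{Bd}$.
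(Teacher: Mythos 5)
Your proposal follows the paper's proof almost step for step: the same reduction to \eqref{grozno} with $m=1$, the same finite blocks $B_{k}$ extracted from sliding windows of $B$, the same vectors $x_{k}$ and $x_{\beta}$, and the same appeal to the final part of the proof of \cite[Theorem 15]{2013JFA} for the dense manifold. You have also correctly put your finger on the one genuinely delicate point, namely that $\overline{Bd}_{l:m_{n}}$ is a $\liminf$ over \emph{all} $s$ while the construction only certifies good windows along the subsequence $s=s_{k}$ (the paper's own proof disposes of this in a single asserted implication, passing from the displayed estimate at $s=s_{k}$ to $\overline{Bd}_{l:m_{n}}(B')\geq c$).

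The gap is that your fix --- choosing the scales with bounded ratio $s_{k+1}/s_{k}\leq C$ --- is not available in general. The block $B_{k}$ must begin beyond a threshold $N_{k}$ determined by how slowly $T_{j}x_{s}\rightarrow 0$ for the already constructed $x_{1},\ldots,x_{k-1}$ (this is what gives $p_{Y}^{k}(T_{j}x_{s})\leq 1/(k+1)$ on $B_{k}$), and $N_{k}$ necessarily exceeds $\max B_{k-1}$, since $p_{Y}^{1}(T_{j}x_{k-1})\geq (k-1)2^{k-1}$ throughout $B_{k-1}$. To find a window of length $m_{s_{k}}$ lying beyond $N_{k}$ and still containing at least $(c-1/k)s_{k}$ points of $B$, one is in the worst case (all near-optimal windows of $B$ anchored at bounded positions, e.g.\ $B$ a set whose densest stretches are initial) forced to take $s_{k}$ at least comparable to a quantity growing with $N_{k}$; since the decay rate of $T_{j}x_{k-1}$ is completely uncontrolled, $N_{k}$ --- hence $s_{k}/s_{k-1}$ --- can be forced to grow arbitrarily fast, and the inequality $\sup_{n}|G\cap[n+1,n+m_{s}]|/s\geq (c-1/k)/C$ for $s\in[s_{k},s_{k+1})$ is not secured. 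The bounded-ratio device does become superfluous when $m_{n}=O(n)$: if $n/L\leq m_{n}\leq L'n$, a pigeonhole argument shows that a single good window of length $m_{s'}$ produces, for every $s\leq s'$, a sub-window of length $m_{s}$ containing at least $\frac{(c-\varepsilon)s}{2LL'}$ points of the set, so good windows along \emph{any} subsequence already give $\overline{Bd}_{l:m_{n}}(G)>0$. For general $(m_{n})\in{\mathrm R}$ with $m_{n}/n\rightarrow\infty$ this transfer fails, and some further idea (for instance, packing into each $B_{k}$ witnesses for a whole range of window lengths, with the ranges arranged to cover a cofinite set of $s$) is needed to complete either your argument or the paper's at this step.
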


\begin{proof}
Without loss of generality, we may assume that \eqref{grozno} holds and $m=1.$ Since $\overline{Bd}_{l:m_{n}}(B)=c$, we can find two strictly increasing sequences $(s_{k})_{k\in {\mathbb N}}$ and $(j_{k})_{k\in {\mathbb N}}$ of positive integers such that for each $k\in {\mathbb N}$ we have that the set $B_{k}:=B \cap [j_{k}+1,j_{k}+m_{s_{k}}]$ contains at least $s_{k}(c-k^{-1})$ integers. Set $B':=\bigcup_{k\in {\mathbb N}}B_{k}.$
Then 
$$
\frac{1}{s_{k}}\sup_{n\in {\mathbb N}}\Bigl| B' \cap [n+1,n+m_{s_{k}}]\Bigr|\geq c\bigl(1-k^{-1} \bigr),\quad k\in {\mathbb N},
$$
which implies 
$$
\overline{Bd}_{l:m_{n}}(B')=\liminf_{s\rightarrow \infty}\frac{\sup_{n\in {\mathbb N}} \bigl| B' \cap [n+1,n+m_{s}]\bigr|}{s}\geq c.
$$
Now we can construct a sequence $(x_{k})_{k\in {\mathbb N}}$ in $X_{0}$
such that, for every $k\in {\mathbb N},$ one has: $p_{k}(x_{k})\leq 1,$ $p_{Y}^{1}(T_{j}x_{k})\geq k2^{k}$ for $j\in B_{k},$ $p_{Y}^{k}(T_{j}x_{s})\leq 1/(k+1)$ for all $s\in {\mathbb N}_{k-1}
$ and $j\in {\mathbb N}$ with 
$j\in B_{k},$ as well as: 
$$
\frac{\bigl| \{1\leq j \leq m_{j_{k}} : p_{Y}^{k}(T_{j}x_{s})<1/k \}\bigr|}{j_{k}}\geq 1-k^{-2},\quad s\in {\mathbb N}_{k-1}.
$$
The final conclusion follows similarly as in the proof of Theorem \ref{na-dobro}.
\end{proof}

We can simply formulate corollaries of Theorem \ref{na-dobrorc}, Theorem \ref{na-dobro2} and Theorem \ref{na-dobro3} for dense $\lambda$-reiterative distributional chaos of type $1^+,$ dense $\lambda$-distributional chaos of type $2$ and dense $\lambda$-distributional chaos of type $2_{Bd}, $ respectively, as well as corollaries of  Theorem \ref{na-dobrorc}, Theorem \ref{na-dobro1}, Theorem \ref{na-dobro2} and Theorem \ref{na-dobro3} for dense ($\lambda-$) $m_{n}$-reiterative distributional chaos of type $1^+,$ certain types of dense Li-Yorke chaos, dense  ($\lambda-$) $m_{n}$-distributional chaos of type $2$ and dense ($\lambda-$) $m_{n}$-distributional chaos of type $2_{Bd}, $ respectively, for linear unbounded operators (see Corollary \ref{na-dobrorade} and Corollary \ref{cea1}).
Concerning the possible applications of Theorem \ref{na-dobro}, Theorem \ref{na-dobrorc}, Theorem \ref{na-dobro1}, Theorem \ref{na-dobro2} and Theorem \ref{na-dobro3} and these corollaries to unbounded linear operators, we would like to note that the additional use of regularizing operator $C\in L(X)$ does not take a right effect in the investigation of dense Li-Yorke chaos, in contrast with the notion of dense distributional chaos (cf. the second part of \cite[Corollary 3.12]{mendoza}). On the other hand, there exists
a great number of possible applications of the above-mentioned results to the sequences of bounded linear operators in Banach spaces, and we will present here only one illustrative:

\begin{example}\label{rdc}
Let $X:=l^{1},$ $B\subseteq {\mathbb N}$ and $\overline{Bd}(B)=1.$ Further on, let for each $j\in {\mathbb N}$ we have that $(\omega_{n}^{j})_{n\in {\mathbb N}}$ is a bounded sequence of positive reals such that $\omega_{1}^{j}>j^{3}$ for all $j\in B,$ and let for each $j\in {\mathbb N}$ we have that $(a_{n}^{j})_{n\in {\mathbb N}}$ is a strictly increasing sequence of positive integers such that 
the sequence $(a_{1}^{j})_{j\in {\mathbb N}}$ is strictly increasing, as well.
Set
$$
T_{j}\bigl \langle x_{n} \bigr \rangle_{n\in {\mathbb N}}:=\bigl \langle \omega_{1}^{j} x_{a_{1}^{j}}, \omega_{2}^{j} x_{a_{2}^{j}},\cdot \cdot \cdot \bigr \rangle,\quad \bigl \langle x_{n} \bigr \rangle_{n\in {\mathbb N}} \in X.
$$  
Define $x_{n}:=0$ if $n\notin \bigcup_{j\in B}a_{1}^{j}$ and $x_{n}:=j^{-2}$ if $n=a_{1}^{j}$ for some $j\in B.$ Then it can be easily seen that the vector $\langle x_{n}  \rangle_{n\in {\mathbb N}}$ is reiteratively distributionally unbounded of type $1^+$ for $(T_{j})_{j\in {\mathbb N}}.$ By Theorem \ref{na-dobro}, it readily follows that the sequence $(T_{j})_{j\in {\mathbb N}}$ is reiteratively distributionally chaotic of type $1^+.$
\end{example}

\subsection{An application to abstract partial differential equations}\label{PDEs}

It is almost straightforward and rather technical to transfer all results proved in this section by now for operator families defined on the non-negative real axis. For the sake of brevity and better exposition, we will consider here only continuous analogues of Theorem \ref{na-dobro} and Corollary \ref{na-dobrorade}-Corollary \ref{na-dobrwo}.

Let $T(t) : D(T(t)) \subseteq X \rightarrow Y$ be a linear possibly not continuous mapping ($t\geq 0$). By $Z(T)$ we denote the set of all $x\in X$ such that
$x\in D(T(t))$ for all $t\geq 0$ as well as that the mapping $t\mapsto T(t)x,$ $t\geq 0$ is continuous.  
Denote by $m(\cdot)$ the Lebesgue measure on $[0,\infty)$ and by ${\mathrm F}$ the class consisting of all  increasing mappings $f : [0,\infty) \rightarrow [1,\infty)$ satisfying that  $\liminf_{t\rightarrow +\infty}\frac{f(t)}{t}>0.$

We will use the following continuous counterpart of Definition \ref{prckojed}:

\begin{defn}\label{prckojed-prim} (\cite{F-operatori})
Let $A\subseteq [0,\infty)$, let $f \in {\mathrm F},$ and let $q\in [1,\infty).$ Then: 
\begin{itemize}
\item[(i)] The lower $f$-density of $A,$ denoted by $\underline{d}_{f}(A),$ is defined through:
$$
\underline{d}_{f} (A):=\liminf_{t\rightarrow \infty}\frac{m(A \cap [0,f(t)])}{t}.
$$
\item[(ii)] The lower $qc$-density of $A,$ denoted by $\underline{d}_{qc}(A),$ is defined through:
$$
\underline{d}_{qc}(A):=\liminf_{t\rightarrow \infty}\frac{m(A \cap [0,t^{q}])}{t}.
$$

\end{itemize}
\end{defn}

We introduce the notion of $\tilde{X}_{f}$-distributional chaos as follows:

\begin{defn}\label{DC-unbounded-fric-cont} 
Suppose that $\tilde{X}$ is a non-empty subset of $X,$ $T(t) : D(T(t)) \subseteq X \rightarrow Y$ is a linear possibly not continuous mapping ($t\geq 0$) and $f \in {\mathrm F}.$ If there exist an uncountable
set $S\subseteq Z(T) \cap \tilde{X}$ and
$\sigma>0$ such that for each $\epsilon>0$ and for each pair $x,\
y\in S$ of distinct points we have that 
\begin{align*}
\begin{split}
& \underline{d}_{f}\Bigl( \bigl\{t\geq 0 :
d_{Y}\bigl(T(t)x,T(t)y\bigr)< \sigma \bigr\}\Bigr)=0,
\\
& \underline{d}_{f}\Bigl(\bigl\{t\geq 0 : d_{Y}\bigl(T(t)x,T(t)y\bigr)
\geq \epsilon \bigr\}\Bigr)=0,
\end{split}
\end{align*}
then we say that $(T(t))_{t\geq 0}$ is
$\tilde{X}_{f}$ distributionally chaotic ($f$-distributionally chaotic, if $\tilde{X}=X$). 
Furthermore, we say that  $(T(t))_{t\geq 0}$ is densely
$\tilde{X}_{f}$-distributionally chaotic iff $S$ can be chosen to be dense in $\tilde{X}.$
The set $S$ is said to be ${\sigma_{\tilde{X}}}_{f}$-scrambled set ($\sigma_{f}$-scrambled set in the case that $\tilde{X}=X$)     
of $(T(t))_{t\geq 0}.$

If $q\geq 1$ and $f(t):=1+t^{q}$ ($t\geq 0$), then we particularly obtain the notions of (dense) $\tilde{X}_{q}$-distributional chaos, (dense) $q$-distributional chaos, ${\sigma_{\tilde{X}}}_{q}$-scrambled set and $\sigma_{q}$-scrambled set for $(T(t))_{t\geq 0}.$
\end{defn}

The basic result for applications is the following counterpart of Theorem \ref{na-dobro}, whose proof is very similar to that of afore-mentioned theorem and therefore omitted (cf. also the proof of \cite[Theorem 4.1]{mendoza}):

\begin{thm}\label{na-dobro-cont}
Suppose that $X$ is separable, $f \in {\mathrm F}$ and $m_{n}:=\lceil f(n) \rceil,$ $n\in {\mathbb N}.$ Suppose, further, that $(T(t))_{t\geq 0}\subseteq L(X,Y)$ is strongly continuous,
$X_{0}$ is a dense linear subspace of $X,$ as well as:
\begin{itemize}
\item[(i)] $\lim_{t\rightarrow \infty}T(t)x=0,$ $x\in X_{0},$
\item[(ii)] there exist a vector $y\in X,$ a set $B\subseteq [0,\infty)$ and a number $m\in {\mathbb N}$ such that
\begin{align*}
\liminf_{t\rightarrow \infty}\frac{f(t)-\bigl| B \cap [1,f(t)]\bigr|}{t}=0
\end{align*}
and
\begin{align}\label{kamenja-cont}
\lim_{t\rightarrow \infty,t\in B}p^{m}_{Y}\bigl( T(t)y \bigr)=+\infty.
\end{align} 
\end{itemize}
Then $(T(t))_{t\geq 0}$ is densely $f$-distributionally chaotic and the corresponding scrambled set $S$ can be chosen to be a dense uniformly $f$-distributionally irregular submanifold of $X.$ Furthermore, for each fixed number $t_{0}>0,$ we have that the operator $T(t_{0})$ 
is densely $m_{n}$-distributionally chaotic, and moreover, the corresponding scrambled set $S_{t_{0}}$ can be chosen to be a dense uniformly $m_{n}$-distributionally irregular submanifold of $X.$
\end{thm}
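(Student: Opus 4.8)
The plan is to transport the construction in the proof of Theorem~\ref{na-dobro} to the half-line, replacing counting of integers by the Lebesgue measure $m(\cdot)$ and the discrete windows $[1,m_{j_{k}}]$ by the intervals $[0,f(t_{k})]$. First I would carry out the two standard reductions. Strong continuity of $(T(t))_{t\geq 0}$ makes $x\mapsto \sup_{0\leq s\leq n}p_{Y}^{m}(T(s)x)$ a continuous seminorm on $X$ for each $n\in {\mathbb N}$ (finiteness on compacts and equicontinuity follow from the Banach--Steinhaus theorem in the barrelled space $X$), so after passing to an equivalent increasing fundamental system of seminorms I may assume $m=1$ and
\begin{align}\label{groznocont}
p_{Y}^{1}(T(t)x)\leq p_{\lceil t\rceil+1}(x),\quad x\in X,\ t\geq 0.
\end{align}
Next I rewrite hypothesis (ii): since $m([0,f(t)])=f(t)$, the condition $\liminf_{t\to\infty}\frac{f(t)-\bigl|B\cap[1,f(t)]\bigr|}{t}=0$ is, up to the negligible term $m(B\cap[0,1])$, exactly $\underline{d}_{f}(B^{c})=0$, so $y$ is an $f$-distributionally unbounded vector with $\lim_{t\in B,\,t\to\infty}p_{Y}^{1}(T(t)y)=+\infty$.

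Using $\underline{d}_{f}(B^{c})=0$ together with the density of $X_{0}$, I would then build a sequence $(x_{k})\subseteq X_{0}$ and a strictly increasing sequence $(t_{k})$ of positive reals so that, for every $k$, one has $p_{k}(x_{k})\leq 1$ and on the window $[0,f(t_{k})]$ both
$$
m\bigl(\{0\leq s\leq f(t_{k}) : p_{Y}^{1}(T(s)x_{k})\geq k2^{k}\}\bigr)\geq f(t_{k})-\tfrac{t_{k}}{k}
$$
and, for each $s<k$,
$$
m\bigl(\{0\leq s\leq f(t_{k}) : p_{Y}^{k}(T(s)x_{s})<1/k\}\bigr)\geq f(t_{k})-\tfrac{t_{k}}{k}.
$$
The first estimate is obtained by approximating a scaled copy $\lambda_{k}y$ (with $p_{k}(\lambda_{k}y)\leq 1/2$) by $x_{k}\in X_{0}$ so closely in $p_{\lceil f(t_{k})\rceil+1}$ that, via \eqref{groznocont}, $T(s)x_{k}$ inherits the blow-up of $T(s)y$ over $B\cap[0,f(t_{k})]$; the second uses $\lim_{t\to\infty}T(t)x_{s}=0$ for $x_{s}\in X_{0}$ and the choice of $t_{k}$ large. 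With a sufficiently fast sequence $(r_{q})$ satisfying the continuous analogue $r_{q+1}\geq 1+r_{q}+f(t_{r_{q}})$ of \eqref{miruga}, I form $x_{\beta}=\sum_{q}\beta_{r_{q}}x_{r_{q}}/2^{r_{q}}$.

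The two pointwise estimates of Theorem~\ref{na-dobro} now transfer verbatim, with \eqref{groznocont} playing the role of the discrete domination: on the blow-up window $p_{Y}^{1}(T(s)x_{\beta})\geq r_{k}-1$, whence $d_{Y}(T(s)\alpha_{1}x_{\beta},T(s)\alpha_{2}x_{\beta})\geq 1/4$ for distinct $\alpha_{1},\alpha_{2}$, while on the convergence window $d_{Y}(T(s)x_{\beta},0)$ is arbitrarily small. Dividing the corresponding measures by $t_{k}$ and letting $k\to\infty$ yields $\underline{d}_{f}(\{t: d_{Y}(T(t)\alpha_{1}x_{\beta},T(t)\alpha_{2}x_{\beta})<1/4\})=0$ and $\underline{d}_{f}(\{t:d_{Y}(T(t)\alpha_{1}x_{\beta},T(t)\alpha_{2}x_{\beta})\geq\epsilon\})=0$ for every $\epsilon>0$; by Definition~\ref{DC-unbounded-fric-cont} the set $span\{x_{\beta}\}$ is a $\sigma_{f}$-scrambled set with $\sigma=1/4$. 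The passage from this single irregular vector to a \emph{dense uniformly} $f$-distributionally irregular submanifold is then carried out exactly as in the last part of the proof of \cite[Theorem~15]{2013JFA}, letting $\beta$ range over an uncountable family and mixing in vectors of $X_{0}$.

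For the final clause I would fix $t_{0}>0$ and apply the discrete Theorem~\ref{na-dobro} to the sequence $T_{j}:=T(t_{0})^{j}=T(jt_{0})$ (the identity being valid for the strongly continuous semigroups of the intended applications, where $Y=X$): its hypothesis (i) holds since $\lim_{j}T(jt_{0})x=0$ for $x\in X_{0}$, and its hypothesis (ii) --- an $m_{n}$-distributionally unbounded vector for $(T(jt_{0}))_{j}$, with $m_{n}=\lceil f(n)\rceil$ --- is produced by the same window argument, now \emph{counting} the integers $j$ with $jt_{0}$ inside the blow-up intervals. The relation $m_{n}=\lceil f(n)\rceil$ matches the continuous window $[0,f(n)]$ with the discrete window $[1,m_{n}]$, converting $\underline{d}_{f}$ into $\underline{d}_{m_{n}}$. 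I expect the main obstacle to be precisely this continuous-to-discrete transfer: one must arrange the blow-up and the convergence to occur on unions of time-intervals long enough that the lattice $t_{0}{\mathbb N}$ meets them in a set of integers of vanishing $m_{n}$-co-density, which forces the extraction of $(x_{k})$ and $(t_{k})$ to be performed with the sampling in mind rather than as an afterthought; by comparison, the purely measure-theoretic replacement of counting by $m(\cdot)$ in the first three steps is routine.
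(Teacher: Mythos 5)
Your proposal is correct and follows essentially the route the paper itself intends: the paper omits the proof of Theorem \ref{na-dobro-cont}, stating only that it is ``very similar'' to that of Theorem \ref{na-dobro}, and your plan carries out exactly that transfer (Lebesgue measure in place of counting, windows $[0,f(t_{k})]$ in place of $[1,m_{j_{k}}]$, and sampling along $t_{0}{\mathbb N}$ for the final clause). Your explicit flagging of the two delicate points --- that the discretization at $t_{0}$ needs local equicontinuity together with a cocycle/semigroup identity, and that the lattice must meet the exceptional sets in few points --- is an honest account of the only non-routine content of the omitted argument.
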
 

We also attach the following obvious counterparts of Corollary \ref{na-dobrorade}-Corollary \ref{na-dobrwo}:

\begin{cor}\label{na-dobrorade-cont}
Suppose that $X$ is separable, $\lambda \in (0,1],$ $(T(t))_{t\geq 0}\subseteq L(X,Y)$ is strongly continuous,
$X_{0}$ is a dense linear subspace of $X,$ as well as:
\begin{itemize}
\item[(i)] $\lim_{t\rightarrow \infty}T(t)x=0,$ $x\in X_{0},$
\item[(ii)] there exist a vector $y\in X,$ a set $B\subseteq  [0,\infty)$ and a number $m\in {\mathbb N}$ such that
\begin{align*}
\liminf_{n\rightarrow \infty}\frac{t^{1/\lambda}-\bigl| B \cap [1,t^{1/\lambda}]\bigr|}{t}=0
\end{align*}
and \eqref{kamenja-cont} holds. 
\end{itemize}
Then $(T(t))_{t\geq 0}$ is densely $\lambda$-distributionally chaotic and the corresponding scrambled set $S$ can be chosen to be a dense uniformly $\lambda$-distributionally irregular submanifold of $X.$ Furthermore, for each fixed number $t_{0}>0,$ we have that the operator $T(t_{0})$ 
is densely $m_{n}$-distributionally chaotic, and moreover, the corresponding scrambled set $S_{t_{0}}$ can be chosen to be a dense uniformly $m_{n}$-distributionally irregular submanifold of $X;$ here, $m_{n}:=\lceil n^{q} \rceil$ for all $n\in {\mathbb N}.$
\end{cor}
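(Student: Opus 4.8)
The plan is to obtain this corollary as an immediate specialization of Theorem \ref{na-dobro-cont}. First I would take $f(t) := 1 + t^{1/\lambda}$ for $t \geq 0$ and verify that $f \in {\mathrm F}$: the mapping is increasing with $f(0) = 1$, and since $\lambda \in (0,1]$ we have $1/\lambda \geq 1$, so $f(t)/t = t^{-1} + t^{1/\lambda - 1}$ tends to $1$ when $\lambda = 1$ and to $+\infty$ when $\lambda < 1$; in either case $\liminf_{t\to\infty} f(t)/t > 0$. With this choice, $f$-distributional chaos is by definition exactly $\lambda$-distributional chaos, this being the case $q = 1/\lambda$ of the final paragraph of Definition \ref{DC-unbounded-fric-cont}. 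Thus the first assertion of the corollary will follow once the hypotheses of Theorem \ref{na-dobro-cont} are checked.

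Next I would match the hypotheses. Hypothesis (i) is identical in the two statements. For hypothesis (ii), I would observe that the density condition $\liminf_{t\to\infty} \frac{t^{1/\lambda} - |B \cap [1, t^{1/\lambda}]|}{t} = 0$ of the corollary coincides with the requirement $\liminf_{t\to\infty} \frac{f(t) - |B \cap [1, f(t)]|}{t} = 0$ of Theorem \ref{na-dobro-cont}: the two expressions differ only by replacing $t^{1/\lambda}$ with $1 + t^{1/\lambda}$, which enlarges the interval by a length bounded uniformly in $t$, so both the leading term of the numerator and the count $|B \cap [1,\cdot]|$ change by a quantity that is $O(1)$ and hence negligible after division by $t$. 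The divergence requirement \eqref{kamenja-cont} is literally the same in both statements. Consequently all the hypotheses of Theorem \ref{na-dobro-cont} hold, and its conclusion yields the dense $f$-distributional (equivalently $\lambda$-distributional) chaos of $(T(t))_{t\geq 0}$ together with the dense uniformly irregular submanifold.

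Finally I would read off the single-operator statement. Theorem \ref{na-dobro-cont} gives that, for each fixed $t_0 > 0$, the operator $T(t_0)$ is densely $m_n$-distributionally chaotic with $m_n := \lceil f(n) \rceil = \lceil 1 + n^{1/\lambda} \rceil$; to match the stated weight $m_n := \lceil n^q \rceil$ with $q = 1/\lambda$, I would invoke the stability of the lower and upper $(m_n)$-densities under bounded perturbations of the sequence $(m_n)$. Indeed, replacing $m_n$ by a sequence differing from it by $O(1)$ alters $|A \cap [1, m_n]|$ by $O(1)$ for every $A \subseteq {\mathbb N}$, so, after division by $n$, the values of $\underline{d}_{m_n}(A)$ and $\overline{d}_{m_n}(A)$ are unchanged, and the two weights define the same notion of $m_n$-distributional chaos. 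I do not expect a genuine obstacle here, since the result is essentially a change of parameter; the only point demanding care is the bookkeeping of the $O(1)$ discrepancies among $t^{1/\lambda}$, $1 + t^{1/\lambda}$ and their ceilings, and the verification that these leave every relevant lower or upper density invariant, which is the one place where I would spell out the comparison rather than merely assert it.
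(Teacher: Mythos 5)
Your proposal is correct and matches the paper's intent exactly: the paper states this corollary as an ``obvious counterpart'' obtained by specializing Theorem \ref{na-dobro-cont} to $f(t)=1+t^{1/\lambda}$, which is precisely your route. Your extra care with the $O(1)$ discrepancies between $t^{1/\lambda}$, $1+t^{1/\lambda}$ and their ceilings is a sound (and slightly more explicit) justification than the paper provides, but it is the same argument.
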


\begin{cor}\label{na-dobrwo-cont}
Suppose that $X$ is separable, $(T(t))_{t\geq 0}\subseteq L(X,Y)$ is strongly continuous, $f \in {\mathrm F},$
$X_{0}$ is a dense linear subspace of $X,$ as well as:
\begin{itemize}
\item[(i)]  $\lim_{t\rightarrow \infty}T(t)x=0,$ $x\in X_{0},$
\item[(ii)] there exist a vector $y\in X$ and a number $m\in {\mathbb N}$ such that \eqref{kamenja-cont} holds with $B={\mathbb N}.$
\end{itemize}
Then $(T(t))_{t\geq 0}$ is densely $f$-distributionally chaotic and the corresponding scrambled set $S$ can be chosen to be a dense uniformly $f$-distributionally irregular submanifold of $X.$ Furthermore, for each fixed number $t_{0}>0,$ we have that the operator $T(t_{0})$ 
is densely $m_{n}$-distributionally chaotic, and moreover, the corresponding scrambled set $S_{t_{0}}$ can be chosen to be a dense uniformly $m_{n}$-distributionally irregular submanifold of $X;$ here, $m_{n}:=\lceil f(n) \rceil$ for all $n\in {\mathbb N}.$
\end{cor}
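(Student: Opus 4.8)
The plan is to obtain this corollary as a direct application of Theorem \ref{na-dobro-cont}, since its hypotheses are merely a convenient special case of those of that theorem. First I would set $B:={\mathbb N}$, regarded as a subset of $[0,\infty)$, and check that the two requirements in condition (ii) of Theorem \ref{na-dobro-cont} are in force. The second requirement, namely that \eqref{kamenja-cont} hold, is literally hypothesis (ii) of the present corollary, so nothing is needed there; condition (i) is identical in the two statements; and the standing assumptions (separability of $X$, strong continuity of $(T(t))_{t\geq 0}$, and density of $X_{0}$) transfer verbatim.

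The one point that genuinely calls for verification is the remaining density requirement
\[
\liminf_{t\rightarrow \infty}\frac{f(t)-\bigl| B \cap [1,f(t)]\bigr|}{t}=0
\]
for the choice $B={\mathbb N}$. Here I would observe that, since $f(t)\geq 1$ for every $t\geq 0$, one has $\bigl|{\mathbb N}\cap [1,f(t)]\bigr|=\lfloor f(t)\rfloor$, so that the numerator equals the fractional part $f(t)-\lfloor f(t)\rfloor\in [0,1)$ and is in particular bounded by $1$. Because $f\in {\mathrm F}$ is increasing and satisfies $\liminf_{t\rightarrow +\infty}\frac{f(t)}{t}>0$, we have $f(t)\rightarrow +\infty$, hence $t\rightarrow +\infty$ forces the quotient $\frac{f(t)-\lfloor f(t)\rfloor}{t}$ to tend to $0$. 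Thus not only the lower limit but the full limit is $0$, and the density requirement holds.

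With both parts of condition (ii) and condition (i) established, Theorem \ref{na-dobro-cont} applies directly and delivers at once the dense $f$-distributional chaos of $(T(t))_{t\geq 0}$ together with the asserted dense uniformly $f$-distributionally irregular submanifold, as well as the concluding statement for each discretised operator $T(t_{0})$ with $m_{n}:=\lceil f(n)\rceil$. I do not anticipate any serious obstacle: the entire content of the corollary already resides inside Theorem \ref{na-dobro-cont}, and the only nontrivial step is the elementary observation that taking $B={\mathbb N}$ renders the ``missing mass'' $f(t)-\lfloor f(t)\rfloor$ uniformly bounded, so that dividing by $t$ and letting $t\rightarrow\infty$ automatically produces the vanishing lower $f$-density needed to invoke the theorem.
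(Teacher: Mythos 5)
Your overall strategy --- specializing Theorem \ref{na-dobro-cont} by exhibiting a suitable set $B$ and checking the two halves of its condition (ii) --- is exactly what the paper intends (it offers no separate proof, presenting the corollary as an ``obvious counterpart'' of Theorem \ref{na-dobro-cont}). The difficulty lies in your verification of the density requirement. In the continuous-parameter framework of Subsection \ref{PDEs} every density is computed with the Lebesgue measure $m(\cdot)$ (see Definition \ref{prckojed-prim} and Definition \ref{DC-unbounded-fric-cont}), so the quantity $\bigl|B\cap[1,f(t)]\bigr|$ in Theorem \ref{na-dobro-cont}(ii) must be the Lebesgue measure of $B\cap[1,f(t)]$ for a subset $B$ of $[0,\infty)$, not a cardinality. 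For $B={\mathbb N}$ this measure is $0$, the quotient equals $f(t)/t$, and its lower limit is strictly positive by the very definition of the class ${\mathrm F}$ --- the condition you need is violated, not satisfied. Your identity $\bigl|{\mathbb N}\cap[1,f(t)]\bigr|=\lfloor f(t)\rfloor$ silently replaces Lebesgue measure by counting measure, and that substitution is what makes the numerator look bounded.

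The repair is to read hypothesis (ii) as the continuous counterpart of Corollary \ref{na-dobrwo}(ii), i.e., as the full limit $\lim_{t\rightarrow\infty}p_{Y}^{m}(T(t)y)=+\infty$ over all real $t$, and to apply Theorem \ref{na-dobro-cont} with $B=[0,\infty)$; then $m\bigl(B\cap[1,f(t)]\bigr)=f(t)-1$, the numerator is identically $1$, and the quotient tends to $0$. The distinction is not cosmetic: if the blow-up of $p_{Y}^{m}(T(t)y)$ were known only along integer times, then for a merely strongly continuous family (no semigroup property is assumed in this subsection) the function $t\mapsto p_{Y}^{m}(T(t)y)$ could collapse between consecutive integers, so no Lebesgue-large set of times witnessing unboundedness need exist and the hypotheses of Theorem \ref{na-dobro-cont} genuinely could not be met. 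With the $B=[0,\infty)$ reading the remainder of your argument goes through unchanged.
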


In the next section, we shall consider possible applications of results established in this subsection to the abstract first order differential equations.
We continue with the observation that
we have not used any semigroup property of operator family $(T(t))_{t\geq 0}$
under our consideration so that
the results of this subsection are applicable in the qualitative analysis of solutions for certain classes of the abstract (multi-term) fractional differential equations with Caputo derivatives. Here we will present only one example of possible application
of this type; for more details how we can incorporate Theorem \ref{na-dobro-cont} and Corollary \ref{na-dobrorade-cont}-Corollary \ref{na-dobrwo-cont} in the qualitative analysis of solutions to abstract fractional PDEs, the reader may consult
\cite[Section 3.3]{knjigaho}:

\begin{example}\label{frakcione}
(cf. \cite{ji} and \cite{NSJOM} for the notion) Suppose that $X$ is a symmetric space of non-compact type and rank one,
$p>2,$ the parabolic domain $P_{p}$ and the positive real number
$c_{p}$ possess the same meaning as in \cite{ji}. Suppose, further, that $\Delta_{X,p}^{\natural}$ denotes the corresponding
Laplace-Beltrami operator and 
$P(z)=\sum_{j=0}^{n}a_{j}z^{j},$ $z\in {\mathbb C}$ is a
non-constant complex polynomial with $a_{n}>0.$ Consider the abstract fractional Cauchy problem:
\begin{align*}
\begin{split}
& {\mathbf D}_{t}^{2a}u(t)+ cu(t)= -e^{i\theta}P(\Delta_{X,p}^{\natural}){\mathbf
D}_{t}^{a}u(t),
\quad t \geq 0, \\
& u^{(k)}(0)=u_k,\quad  k=0,\cdot \cdot \cdot, \lceil 2a
\rceil -1,
\end{split}
\end{align*}
where $0<a<2,$ $c>0$ and
$|\theta|<\min(\frac{\pi}{2}-n\arctan
\frac{|p-2|}{2\sqrt{p-1}},\frac{\pi}{2}-n\arctan
\frac{|p-2|}{2\sqrt{p-1}}-\frac{\pi}{2}a).$ Then $-e^{i\theta}P(\Delta_{X,p}^{\natural})$
generates an exponentially bounded, analytic
resolvent propagation family $((R_{\theta,P,0}(t))_{t\geq 0},\cdot
\cdot \cdot, (R_{\theta,P,\lceil 2a \rceil -1}(t))_{t\geq 0})$ of certain
angle; see \cite{knjigaho} for the notion. Applying Corollary \ref{na-dobrwo-cont} and the analysis from \cite[Example 2.8]{NSJOM}, we can show that the condition
$$
-e^{i\theta}P\bigl(\mbox{int}\bigl(P_{p}\bigr)\bigr) \ \cap \
\Bigl\{\bigl(it\bigr)^{a}+c\bigl(it\bigr)^{-a} : t\in {\mathbb
R} \setminus \{0\}\Bigr\}\ \neq \emptyset
$$
implies that $(R_{\theta,P,0}(t))_{t\geq 0}$ is densely $f$-distributionally chaotic and for each $t_{0}>0$ the operator $T(t_{0})$ 
is densely $m_{n}$-distributionally chaotic, where $m_{n}:=\lceil f(n) \rceil$ for all $n\in {\mathbb N}$ ($f \in {\mathrm F}$).
\end{example}

\section{Conclusions, final remarks and open problems}\label{problemen}

In this section, we provide several observations and remarks about results obtained so far and ask some questions. 
In the considerations of 
backward shift operators, we will always assume that $X$ is  a Fr\' echet sequence space in which $(e_{n})_{n\in {\mathbb N}}$ is basis and $(\omega_{n})_{n\in {\mathbb N}}$ is a sequence of positive weights; furthermore, we will always assume that
the unilateral weighted backward shift $T_{\omega},$
given by \eqref{oladilo},  
is a continuous linear operator on $X.$ Recall that the finite linear combinations of vectors 
from the basic $(e_{n})_{n\in {\mathbb N}}$ form a dense submanifold of $X.$

First of all, we would like to ask the following questiones:

\begin{prob}\label{nijelako}
Suppose $(m_{n}) \in {\mathrm R}$ and $T_{\omega}$ is distributionally chaotic. Is it true that 
$T_{\omega}$ is $m_{n}$-distributionally chaotic?
\end{prob}

\begin{prob}\label{malolie}
Let $(m_{n}) \in {\mathrm R}$, let $T\in L(X)$ satisfy that there exists a dense submanifold $X_{0}$ of $X$ such that $\lim_{n\rightarrow \infty}T^{n}x=0$ for all $x\in X_{0},$ and let $T$ be distributionally chaotic. Is it true that 
$T$ is $m_{n}$-distributionally chaotic?
\end{prob}

Concerning these problems, irrelevant of the fact whether the answers to them  are affirmative or not, we would like to note that combining Theorem \ref{ruza} and Theorem \ref{na-dobro} immediately yields the following extension of  \cite[Theorem 25]{2013JFA}:

\begin{thm}\label{djubre}
Suppose that $(m_{n}) \in {\mathrm R}$ and $T\in L(X)$ satisfies that there exists a dense submanifold $X_{0}$ of $X$ such that $\lim_{n\rightarrow \infty}T^{n}x=0$ for all $x\in X_{0}.$ Then the following assertions are equivalent:
\begin{itemize}
\item[(i)] $T$ is $m_{n}$-distributionally chaotic. 
\item[(ii)] $T$ is densely $m_{n}$-distributionally chaotic (of type $1$). 
\item[(iii)] There exists an $m_{n}$-distributionally unbounded (irregular) vector for $T.$
\item[(iv)] There exists a dense uniformly $m_{n}$-distributionally irregular submanifold for $T.$
\end{itemize}
\end{thm}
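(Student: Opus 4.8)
The plan is to establish the cycle (iv) $\Rightarrow$ (ii) $\Rightarrow$ (i) $\Rightarrow$ (iii) $\Rightarrow$ (iv), using Theorem~\ref{ruza} for the arc between $m_n$-distributional chaos and the mere existence of an irregular vector, and Theorem~\ref{na-dobro} to upgrade an unbounded vector to a dense uniformly irregular submanifold. Two of the links are essentially formal. For (iv) $\Rightarrow$ (ii) I would appeal to the observations collected in item [A.] of Section~\ref{totijemarko}: if $X'$ is a uniformly $m_n$-distributionally irregular submanifold with common index $m$, then for any two distinct $x,y\in X'$ the difference $u=x-y$ is a nonzero vector of $X'$, hence $m_n$-distributionally near to $0$ and $m_n$-distributionally $m$-unbounded. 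The near-to-$0$ part forces $G_{x,y,m_n}\equiv 0$, while from $p_m^{Y}(T_j u)\to\infty$ along a set whose complement has lower $m_n$-density zero one gets $d_Y(T_j u,0)\geq 2^{-m-1}$ cofinitely there, by the defining formula \eqref{metri} of the metric; thus $F_{x,y,m_n}(\sigma)=0$ with the uniform choice $\sigma=2^{-m-1}$. Hence $X'$ is an $m_n$-distributionally scrambled set of type $1$, and if it is dense so is the chaos. The implication (ii) $\Rightarrow$ (i) is immediate, dense chaos being in particular chaos.

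The central equivalence (i) $\Leftrightarrow$ (iii) requires no additional hypothesis and is read off directly from Theorem~\ref{ruza}: its equivalence (ii) $\Leftrightarrow$ (iii) states exactly that $T$ is $m_n$-distributionally chaotic if and only if there exists an $m_n$-distributionally irregular vector for $T$. Since an irregular vector is by definition both near to $0$ and unbounded, the ``irregular'' reading of (iii) trivially implies its weaker ``unbounded'' reading, so that the only thing left to prove is the passage from the weakest of all four statements back to the strongest.

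That passage, (iii) $\Rightarrow$ (iv), is the one genuinely substantive step, and it is precisely here that the standing hypothesis on $X_0$ is consumed. Granting only an $m_n$-distributionally unbounded vector $y\in X$, I would apply Theorem~\ref{na-dobro} to the sequence $(T_j\equiv T^j)_{j\in{\mathbb N}}$ with $Y=X$: hypothesis (i) of that theorem is verbatim the assumption $\lim_{n\to\infty}T^n x=0$ for all $x$ in the dense subspace $X_0$, and hypothesis (ii) is the existence of $y$. Its conclusion delivers a dense uniformly $m_n$-distributionally irregular submanifold, which is (iv) (and re-proves (ii)). I expect the main obstacle to be one of bookkeeping rather than of mathematical depth: one must notice that the weakest hypothesis (an unbounded vector) already triggers Theorem~\ref{na-dobro}, and that the density condition on $X_0$ matches its hypothesis (i) exactly. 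The single genuine caveat is that Theorem~\ref{na-dobro} is stated for separable $X$; I would therefore carry separability as the standing assumption of this concluding section (or, failing that, replace $X$ by a separable closed $T$-invariant subspace containing $y$ and transport the density hypothesis to it), and flag this transport as the only point meriting care.
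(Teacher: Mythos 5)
Your proposal follows the paper's own route exactly: the paper obtains this theorem by ``combining Theorem \ref{ruza} and Theorem \ref{na-dobro}'', which is precisely your cycle, with Theorem \ref{ruza} supplying (i) $\Leftrightarrow$ (iii) and Theorem \ref{na-dobro} (applied to $T_{j}\equiv T^{j}$ under the standing hypothesis on $X_{0}$) supplying (iii) $\Rightarrow$ (iv), the remaining arrows being formal. Your caveat about separability being needed for Theorem \ref{na-dobro} is a legitimate point of care that the paper itself passes over in silence.
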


The following generalization of \cite[Theorem 26, Corollary 27]{2013JFA} can be proved as for distributional chaos (similarly we can reconsider the statements \cite[Theorem 29-Theorem 30, Corollary 31-Corollary 32]{2013JFA} for $m_{n}$-distributional chaos
in Fr\' echet sequence spaces in which $(e_{n})_{n\in {\mathbb Z}}$ is a basis; with the exception of \cite[Problem 23]{2013JFA}, we obtain further extensions of all other statements established in \cite[Section 3]{2013JFA} for sequences of operators):

\begin{thm}\label{da-se-ohladi-rsd}
\begin{itemize}
\item[(i)] Suppose that $(m_{n}) \in {\mathrm R}$ and the operator $T$ is given by \eqref{oladilo} with the weight $w_{n}\equiv 1$ ($n\in {\mathbb N}$). Let there exist a subset $S$ of natural numbers such that
the series $\sum_{n\in S}e_{n}$ converges in $X,$ and $\underline{d}_{m_{n}}(S^{c})=0.$ Then the operator $T$ is densely $m_{n}$-distributionally
chaotic. 
\item[(ii)] Suppose that $(m_{n}) \in {\mathrm R}$ and the operator $T_{\omega}$ satisfies that there exists a subset  $S$ of natural numbers such that
the series $\sum_{n\in S}(\prod_{i=1}^{n}\omega_{i})^{-1}e_{n}$ converges in $X,$ and $\underline{d}_{m_{n}}(S^{c})=0.$ Then the operator $T$ is densely $m_{n}$-distributionally
chaotic. 
\end{itemize}
\end{thm}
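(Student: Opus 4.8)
The plan is to deduce both statements from Theorem \ref{na-dobro} (equivalently, from the equivalence (iii) $\Leftrightarrow$ (iv) of Theorem \ref{djubre}), applied to the sequence $(T_{j}\equiv T_{\omega}^{j})_{j\in {\mathbb N}}$ with $Y=X$ and with $X_{0}$ taken to be the linear span of the basis vectors $(e_{n})_{n\in {\mathbb N}}$. Since $X$ carries the Schauder basis $(e_{n})_{n\in {\mathbb N}}$ it is separable, so the standing separability hypothesis of Theorem \ref{na-dobro} is met. Writing $\beta_{n}:=\prod_{i=1}^{n}\omega_{i}$ (with $\beta_{0}:=1$, and $\beta_{n}\equiv 1$ in part (i)), a direct computation from \eqref{oladilo} gives $T_{\omega}e_{n}=\omega_{n-1}e_{n-1}$ for $n\geq 2$ and $T_{\omega}e_{1}=0$, so that $T_{\omega}^{j}e_{n}=0$ whenever $j\geq n$. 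Hence every finite linear combination of basis vectors is annihilated by $T_{\omega}^{j}$ for all sufficiently large $j$, and condition (i) of Theorem \ref{na-dobro}, namely $\lim_{j\to\infty}T_{\omega}^{j}x=0$ on the dense subspace $X_{0}$, holds automatically. It therefore remains only to produce a single $m_{n}$-distributionally unbounded vector, which is condition (ii).

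For this vector I would exploit the first-coordinate identity $(T_{\omega}^{j}x)_{1}=\beta_{j}x_{j+1}$, which follows at once from \eqref{oladilo} because each output coordinate of $T_{\omega}^{j}$ originates from a single input coordinate. The idea is to place mass on the coordinates indexed by $S+1$ so that this first coordinate blows up precisely along $S$. Concretely, I would set $x:=\sum_{n\in S}\beta_{n}^{-1}d_{n}e_{n+1}$, where $(d_{n})_{n\in S}$ is a sequence of positive reals with $d_{n}\to\infty$ to be chosen below; then $(T_{\omega}^{n}x)_{1}=\beta_{n}x_{n+1}=d_{n}$ for every $n\in S$, and since the coordinate functional $x\mapsto x_{1}$ is continuous there are $m\in {\mathbb N}$ and $c>0$ with $p_{m}(T_{\omega}^{n}x)\geq c\,d_{n}\to\infty$ along $S$. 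Taking $B:=S$, the hypothesis $\underline{d}_{m_{n}}(S^{c})=0$ shows that $x$ is $m_{n}$-distributionally $m$-unbounded in the sense of Definition \ref{nizovi-fgh}(ii). Theorem \ref{na-dobro} then yields dense $m_{n}$-distributional chaos together with a dense uniformly $m_{n}$-distributionally irregular submanifold, proving both (i) (the case $\beta_{n}\equiv 1$) and (ii) simultaneously.

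The choice of $(d_{n})$ must keep $x$ inside $X$. Here I would use the standard tail argument: since $\sum_{n\in S}\beta_{n}^{-1}e_{n}$ converges, for each of the increasing seminorms $p_{m}$ the nonnegative numbers $\beta_{n}^{-1}p_{m}(e_{n})$ ($n\in S$) are summable, and for any summable nonnegative series one can always find weights tending to infinity which preserve summability (e.g.\ the reciprocal square roots of the tails). Diagonalizing over the countably many seminorms $p_{m}$, which is legitimate precisely because they are increasing, produces a single sequence $d_{n}\to\infty$ that works for all $m$ at once.

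The main obstacle, and the point where the argument must be carried out with care, is exactly this last convergence check: the unbounded vector lives on the shifted index set $S+1$ rather than on $S$, and in a general Fr\'echet sequence space there is no a priori control of $p_{m}(e_{n+1})$ by $p_{m}(e_{n})$. I would handle this by invoking the assumed continuity of $T_{\omega}$ to transfer summability from $(e_{n})_{n\in S}$ to $(e_{n+1})_{n\in S}$, exactly as in the proof of \cite[Theorem 26]{2013JFA}; in the Banach cases $X=l^{p}$ or $X=c_{0}$ it is immediate since $\|e_{n+1}\|=\|e_{n}\|$. Once $x\in X$ is secured, no further work is needed, the conclusion being a direct application of Theorem \ref{na-dobro} with the lower $(m_{n})$-density replacing the ordinary lower density throughout, exactly as in the distributionally chaotic case.
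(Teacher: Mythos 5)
Your overall strategy is exactly the paper's: the paper gives no written proof, saying only that the result ``can be proved as for distributional chaos'' by adapting \cite[Theorem 26, Corollary 27]{2013JFA}, and that adaptation is precisely what you carry out --- take $X_{0}=span\{e_{n} : n\in {\mathbb N}\}$, observe $T_{\omega}^{j}e_{n}=0$ for $j\geq n$ so that condition (i) of Theorem \ref{na-dobro} is automatic, and reduce everything to exhibiting one $m_{n}$-distributionally unbounded vector via the identity $(T_{\omega}^{j}x)_{1}=\beta_{j}x_{j+1}$ and the set $B=S$. Two of the supporting claims in your convergence check are, however, false as written. First, convergence of $\sum_{n\in S}\beta_{n}^{-1}e_{n}$ in a Fr\'echet (or even Banach) sequence space does \emph{not} imply that the scalar series $\sum_{n\in S}\beta_{n}^{-1}p_{m}(e_{n})$ is summable: in $X=l^{2}$ the vector series converges iff $\sum_{n\in S}\beta_{n}^{-2}<\infty$, which does not give $\sum_{n\in S}\beta_{n}^{-1}<\infty$. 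So you cannot reduce to ``a summable nonnegative series'' and then reweight it. The repair is to run the dilation argument on the \emph{vector-valued} tails $R_{k}:=\sum_{n\in S,\,n\geq k}\beta_{n}^{-1}e_{n}\to 0$: setting $d_{n}:=\bigl(\sup_{m\geq n}p(R_{m})\bigr)^{-1/2}$ (diagonalized over the countably many seminorms), Abel summation gives
$p\bigl(\sum_{k\leq n\leq l}d_{n}\beta_{n}^{-1}e_{n}\bigr)\leq d_{k}p(R_{k})+d_{l}p(R_{l+1})+\sum_{n}(d_{n}-d_{n-1})p(R_{n})\to 0$,
so a sequence $d_{n}\to\infty$ preserving convergence does exist --- but this is a different argument from the one you wrote.

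Second, the index shift. Your unbounded vector is supported on $S+1$, and you justify convergence of $\sum_{n\in S}\beta_{n}^{-1}d_{n}e_{n+1}$ by invoking continuity of $T_{\omega}$ to ``transfer summability from $(e_{n})_{n\in S}$ to $(e_{n+1})_{n\in S}$''. This runs in the wrong direction: since $T_{\omega}e_{n+1}=\omega_{n}e_{n}$, continuity of $T_{\omega}$ lets you pass from convergence of an $(e_{n+1})$-series to convergence of the associated $(e_{n})$-series, not conversely. In $l^{p}$ and $c_{0}$ the shift is harmless because the norm of a finitely supported vector depends only on the moduli of its coordinates, but the theorem is stated for a general Fr\'echet sequence space with basis, where convergence of $\sum c_{n}e_{n}$ gives no control over $\sum c_{n}e_{n+1}$. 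You need either to read the hypothesis on the shifted index set (which is how the bookkeeping comes out with the indexing conventions of \cite{2013JFA}) or to supply a separate argument for the spaces at hand; as written, this step does not go through in the stated generality.
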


\begin{cor}\label{rsd-denari}
\begin{itemize}
\item[(i)] Suppose that $\lambda \in (0,1]$ and the operator $T$ is given by \eqref{oladilo} with the weight $w_{n}\equiv 1$ ($n\in {\mathbb N}$). Let there exist a subset $S$ of natural numbers such that
the series $\sum_{n\in S}e_{n}$ converges in $X,$ and $\underline{d}_{1/\lambda}(S^{c})=0.$ Then the operator $T$ is densely $\lambda$-distributionally
chaotic. 
\item[(ii)] Suppose that $\lambda \in (0,1]$ and the operator $T_{\omega}$ satisfies that there exists a subset  $S$ of natural numbers such that
the series $\sum_{n\in S}(\prod_{i=1}^{n}\omega_{i})^{-1}e_{n}$ converges in $X,$ and $\underline{d}_{1/\lambda}(S^{c})=0.$ Then the operator $T$ is densely $\lambda$-distributionally
chaotic. 
\end{itemize}
\end{cor}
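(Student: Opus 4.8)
The plan is to obtain both assertions as direct specializations of Theorem \ref{da-se-ohladi-rsd}, taking $m_{n}:=n^{1/\lambda}$ for $n\in{\mathbb N}$. First I would verify that this sequence is admissible, i.e. that $(n^{1/\lambda})\in{\mathrm R}$. Since $\lambda\in(0,1]$ forces $1/\lambda\geq 1$, the sequence $(n^{1/\lambda})$ is increasing and lies in $[1,\infty)$, and moreover $\frac{n^{1/\lambda}}{n}=n^{(1-\lambda)/\lambda}\geq 1$ for every $n\in{\mathbb N}$, so that $\liminf_{n\rightarrow\infty}\frac{n^{1/\lambda}}{n}\geq 1>0$; hence $(n^{1/\lambda})\in{\mathrm R}$, as required by the hypotheses of Theorem \ref{da-se-ohladi-rsd}.

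Next I would invoke the terminological convention fixed after Definition \ref{nizovi}, according to which the notion of (dense) $\lambda$-distributional chaos is, by definition, the notion of (dense) $m_{n}$-distributional chaos with $m_{n}\equiv n^{1/\lambda}$. Under this identification it remains only to match the density hypotheses. Comparing Definition \ref{prckojed}(i) with Definition \ref{prckojed}(iii), one sees that for $m_{n}=n^{1/\lambda}$ the lower $(m_{n})$-density coincides verbatim with the lower $(1/\lambda)$-density, i.e. $\underline{d}_{m_{n}}(\cdot)=\underline{d}_{1/\lambda}(\cdot)$, since both are given by $\liminf_{n\rightarrow\infty}\frac{|\,\cdot\,\cap[1,n^{1/\lambda}]|}{n}$. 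In particular, the assumption $\underline{d}_{1/\lambda}(S^{c})=0$ appearing in each of (i) and (ii) is precisely the assumption $\underline{d}_{m_{n}}(S^{c})=0$ demanded by Theorem \ref{da-se-ohladi-rsd}, while the convergence hypotheses on $\sum_{n\in S}e_{n}$ and on $\sum_{n\in S}(\prod_{i=1}^{n}\omega_{i})^{-1}e_{n}$ are literally unchanged.

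With these identifications in place, part (i) of the corollary follows at once by applying Theorem \ref{da-se-ohladi-rsd}(i) to the sequence $(m_{n})=(n^{1/\lambda})$, and part (ii) follows by applying Theorem \ref{da-se-ohladi-rsd}(ii) to the same sequence; in both cases the conclusion ``densely $m_{n}$-distributionally chaotic'' is, by the convention just recalled, exactly ``densely $\lambda$-distributionally chaotic.'' I do not expect any genuine obstacle: the whole content of the argument is that $(n^{1/\lambda})$ is an admissible weight sequence and that the $\lambda$-labelled notions are, by the paper's conventions, the $n^{1/\lambda}$-labelled ones, so the corollary is a pure reading-off of the more general theorem.
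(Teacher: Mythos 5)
Your proposal is correct and is exactly the argument the paper intends: the corollary is stated as an immediate specialization of Theorem \ref{da-se-ohladi-rsd} to $m_{n}:=n^{1/\lambda}$, using the convention that the $\lambda$-labelled notions are the $n^{1/\lambda}$-labelled ones and that $\underline{d}_{m_{n}}=\underline{d}_{1/\lambda}$ for this choice. Your verification that $(n^{1/\lambda})\in{\mathrm R}$ is the only check needed, and it is carried out correctly.
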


Now we would like to ask the following:

\begin{prob}\label{rucak}
Suppose that $X:=l^{p}({\mathbb N})$ for some $p\in [1,\infty)$ or $X:=c_{0}({\mathbb N}),$ and $\lambda \in (0,1].$ Is it true that there exists a densely $\lambda$-distributionally chaotic backward shift operator $T_{\omega}$ which is $\lambda$-distributionally chaotic and not $\lambda'$-distributionally chaotic for any number $\lambda '\in (0,\lambda)?$ 
\end{prob}

Regarding distributionally chaotic backward shift operators, mention should be also made of papers \cite{gimenez-p}-\cite{gimenez}, \cite{afa-raj} and \cite{countable}. For the sake of brevity, we will not reconsider the related problematic for $m_{n}$-distributional chaos here.

Concerning applications to the abstract partial differential equations of first order whose solutions are governed by strongly continuous semigroups, it is clear that our results from Subsection \ref{PDEs} can be employed at any place where the Desch-Schappacher-Webb criterion \cite{fund} is employed (see e.g. \cite{barahina1}, \cite{mendoza} and references cited therein); concerning applications to the abstract ill-posed partial differential equations of first order whose solutions are governed by fractionally integrated $C$-semigroups, our results can be used to the equations considered in \cite{mendoza} and \cite[Subsection 3.1.4]{knjigah}. 
But, if we are in a position, for example, in which the requirements of the Desch-Schappacher-Webb criterion holds for a strongly continuous semigroup $(T(t))_{t\geq 0},$ then we always have the existence of a dense linear subspace $X_{0}$ of $X$ satisfying $\lim_{t\rightarrow \infty}T(t)x=0$ for all $x\in X_{0},$ so that it is quite natural to ask the following:

\begin{prob}\label{DSWprojlim}    
Suppose that $(T(t))_{t\geq 0}$ is a distributionally chaotic, strongly continuous semigroup on $X$ and there exists a dense linear subspace $X_{0}$ of $X$ satisfying $\lim_{t\rightarrow \infty}T(t)x=0$ for all $x\in X_{0}.$ Is it true that 
$(T(t))_{t\geq 0}$ is $f$-distributionally chaotic for all $f\in {\mathrm F}?$
\end{prob}

It seems very plausible that \cite[Theorem 4.2]{mendoza} admits a reformulation for $m_{n}$-distributional chaos, so that a positive solution to Problem \ref{malolie} immediately answers Problem \ref{DSWprojlim} in the affirmative (observe, however, that it is not clear how one can reconsider the above-mentioned theorem for fractional solution operator families).

Suppose finally that $1\leq p<\infty.$ 
We refer the reader to 
\cite[Definition 4.3]{fund} for the notions of an admissible weight function $\rho : [0,\infty) \rightarrow (0,\infty)$ and
the Banach spaces $L_{\rho}^{p}([0,\infty), {\mathbb K}),$ $C_{0,\rho}([0,\infty), {\mathbb K}).$
We close the paper by 
proposing the following continuous counterpart of Problem \ref{rucak}:

\begin{prob}\label{rucak-cont}
Suppose that $\lambda \in (0,1].$ Can we find
an admissible weight function $\rho(\cdot)$ 
and a strongly continuous semigroup $(T(t))_{t\geq 0}$ on $X:=L_{\rho}^{p}([0,\infty), {\mathbb K})$ or $X:=C_{0,\rho}([0,\infty), {\mathbb K}),$ which is $\lambda$-distributionally chaotic and not $\lambda'$-distributionally chaotic for any number $\lambda '\in (0,\lambda)?$
\end{prob}

\end{document}